\newtheorem{lma}{Lemma}[section]
\newaliascnt{thmCt}{lma}
\newtheorem{thm}[thmCt]{Theorem}
\newaliascnt{corCt}{lma}
\newtheorem{cor}[corCt]{Corollary}
\newaliascnt{propCt}{lma}
\newtheorem{prop}[propCt]{Proposition}
\newtheorem*{thm*}{Theorem}
\newtheorem*{cor*}{Corollary}
\newtheorem*{prop*}{Proposition}
\theoremstyle{definition}
\newaliascnt{prgCt}{lma}
\newtheorem{prg}[prgCt]{}
\newaliascnt{dfnCt}{lma}
\newtheorem{dfn}[dfnCt]{Definition}
\newaliascnt{rmkCt}{lma}
\newtheorem{rmk}[rmkCt]{Remark}
\newaliascnt{rmksCt}{lma}
\newaliascnt{ntnCt}{lma}
\newaliascnt{ntnsCt}{lma}
\newaliascnt{qstCt}{lma}
\newtheorem{qst}[qstCt]{Question}
\newaliascnt{prblCt}{lma}
\newaliascnt{obsCt}{lma}
\newaliascnt{exaCt}{lma}
\newaliascnt{exasCt}{lma}
\newcommand{\T}{\mathbb{T}}
\newcommand{\N}{\mathbb{N}}
\newcommand{\R}{\mathbb{R}}
\newcommand{\Z}{\mathbb{Z}}
\newcommand{\K}{\mathrm{K}}
\newcommand{\Ca}{C^*}
\DeclareMathOperator{\Cu}{Cu}
\DeclareMathOperator{\AI}{AI}
\DeclareMathOperator{\AF}{AF}
\DeclareMathOperator{\Lsc}{Lsc}
\DeclareMathOperator{\Hom}{Hom}
\DeclareMathOperator{\id}{id}
\begin{document}
\onehalfspacing
\title{Continuous fields of interval algebras}
\author{Laurent Cantier}

\address{Laurent Cantier\newline
Departamento de Matem\'aticas\\
Universidad de Zaragoza\\
C/Pedro Cerbuna 12\\
50009 Zaragoza\\
Spain}
\email[]{lcantier@unizar.es }

\thanks{The author was supported by the Spanish State Research Agency through Consolidaci\'on Investigadora program No. CNS2022-135340 and by the European Union-NextGenerationEU through a Margarita Salas grant.}
\keywords{Continuous fields, $\Ca$-algebras, Cuntz semigroup, Selection theorem}

\begin{abstract} 
This paper investigates and classifies a specific class of one-parameter continuous fields of $\Ca$-algebras, which can be seen as generalized $\AI$-algebras. Building on the classification of *-homomorphisms between interval algebras by the Cuntz semigroup, along with a selection theorem and a gluing procedure, we employ a \textquoteleft local-to-global\textquoteright\ strategy to achieve our classification result.
\end{abstract}
\maketitle
\section{Introduction}
Continuous fields of $\Ca$-algebras have been a subject of extensive study for decades. These structures, defined by families of $\Ca$-algebras indexed over topological spaces, offer a new perspective on $\Ca$-algebras, allowing them to be analyzed through their fibers. The present paper continues this line of investigation, focusing on continuous fields of AI-algebras (inductive limits of interval algebras) and exploring their morphisms from their local behavior. We recall the reader that Elliott and Ciuperca classified *-homomorphisms from $C([0,1])$ to stable rank one $\Ca$-algebras by means of the Cuntz semigroup. See \cite{CE08}. Subsequently, Robert generalized this result to classify *-homomorphisms between unital $\AI$-algebras. See \cite{R12}. As a direct consequence, the approximate intertwining technique yield the classification of these $\Ca$-algebras via the Cuntz semigroup. Parallel to this, the Elliott-Glimm classification of $\AF$-algebras by means of their $\K_0$-groups has motivated the classification of continuous fields of $\AF$-algebras. See \cite{DEN11}.

This work addresses the following question: Can we classify continuous fields of unital $\AI$-algebras?  While we aim for broad generality, we acknowledge the presence of natural obstructions and consequent restrictions. E.g. on the dimension of the topological \textquoteleft base\textquoteright\ space or on the global behavior of the fibers. After thoroughly establish the main definitions and properties of continuous fields of $\Ca$-algebras and their morphisms, we employ a local-to-global strategy. Firstly, we extend the existence part of Robert's classification to morphisms of continuous fields using an elegant version of Michael's selection theorem. Secondly, we extend the uniqueness part using a technical gluing process. Finally, we introduce the concept of continuous fields of Cuntz semigroups, leading to a functorial classification of certain continuous fields of $\AI$-algebras.  We expose our main result below.

Let $\mathcal{C}$ denotes the class of continuous fields that are realized as $\underset{\rightarrow}{\lim}(C[0,1]\otimes A_i,\phi_{ii+1})_{i\in\N}$, where $A_i$ are unital interval algebras and $\phi_{ii+1}$ are unital continuous fields morphisms.

\begin{thm*}
Let $\mathcal{A},\mathcal{B}\in \mathcal{C}$. Let $\alpha\colon \Cu(\mathcal{A})\rightarrow \Cu(\mathcal{B})$ be any scaled $\Cu$-morphism descending to the fibers.
Then there exists a continuous field morphism $\phi\colon \mathcal{A}\rightarrow\mathcal{B}$, unique up to approximate unitary equivalence, such that $\Cu(\phi)=\alpha$. 

As a consequence, any $\Cu$-isomorphism $\alpha\colon\Cu(\mathcal{A})\simeq \Cu(\mathcal{B})$ descending to the fibers and such that $\alpha([1_\mathcal{A}])=[1_\mathcal{B}]$, lifts to a continuous field isomorphism $\mathcal{A}\simeq \mathcal{B}$ which is unique up to approximate unitary equivalence.
\end{thm*}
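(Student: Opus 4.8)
The plan is to follow the local-to-global strategy of the introduction, reducing the statement to a building-block version over $[0,1]$ and then assembling the global data by an Elliott-type approximate intertwining. Write $\mathcal{A}=\varinjlim(C[0,1]\otimes A_i,\phi_{ii+1})$ and $\mathcal{B}=\varinjlim(C[0,1]\otimes B_j,\psi_{jj+1})$. Since $\Cu$ preserves inductive limits, $\Cu(\mathcal{A})=\varinjlim\Cu(C[0,1]\otimes A_i)$, so a scaled $\Cu$-morphism $\alpha$ descending to the fibers is determined by its restrictions $\alpha_i\colon\Cu(C[0,1]\otimes A_i)\to\Cu(\mathcal{B})$, each of which again descends to the fibers. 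To produce $\phi$ it then suffices to lift every $\alpha_i$, up to a prescribed tolerance and compatibly with the connecting maps, to a field morphism out of the building block $C[0,1]\otimes A_i$; the universal property of the inductive limit assembles these into $\phi$. The theorem thus splits into (i) \emph{existence} of such building-block lifts and (ii) their \emph{uniqueness} up to approximate unitary equivalence.

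For (i), I would argue fiberwise and then select. Fix $i$ and a point $t\in[0,1]$; the fiber map $(\alpha_i)_t\colon\Cu(A_i)\to\Cu(\mathcal{B}_t)$ is a scaled $\Cu$-morphism into the Cuntz semigroup of the $\AI$-algebra $\mathcal{B}_t$, which has stable rank one, so Robert's existence theorem \cite{R12} supplies a *-homomorphism $A_i\to\mathcal{B}_t$ realizing it. The difficulty is continuity in $t$. I would encode the fiberwise solutions as a set-valued assignment $t\mapsto F(t)$, where $F(t)$ is the nonempty set of *-homomorphisms $A_i\to\mathcal{B}_t$ whose $\Cu$-class agrees with $(\alpha_i)_t$ to within the prescribed tolerance, check that $F$ is lower semicontinuous and has the completeness and convexity properties that Michael's selection theorem requires, and extract a continuous selection, i.e.\ a genuine field morphism $C[0,1]\otimes A_i\to\mathcal{B}$ realizing $\alpha_i$ up to tolerance. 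By compactness of $[0,1]$ and finiteness of the generators of $A_i$, this morphism factors through a finite stage $C[0,1]\otimes B_{j(i)}$; letting the tolerances decrease along the system yields the required lift.

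For (ii), suppose $\phi,\phi'\colon\mathcal{A}\to\mathcal{B}$ both satisfy $\Cu(\phi)=\Cu(\phi')=\alpha$. Fiberwise, the uniqueness half of Robert's theorem provides, for each $t$ and each prescribed finite set and $\varepsilon$, a unitary $u_t$ with $\|u_t\,\phi_t(a)\,u_t^{*}-\phi'_t(a)\|<\varepsilon$ on that set. These $u_t$ need not depend continuously on $t$, so I would cover $[0,1]$ by finitely many closed subintervals on each of which an approximate conjugating unitary can be chosen to vary continuously, and then patch across the overlaps. Here one-dimensionality of the base is crucial: on an overlap the two local unitaries differ by a unitary close to $1$, which can be connected to $1$ by a short path and absorbed, so the local pieces glue to a global approximate field unitary witnessing $\phi\approx_{\mathrm{au}}\phi'$.

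I expect the gluing in (ii) to be the main obstacle: keeping the comparison unitaries controlled simultaneously in the $\Ca$-norm and in the parameter $t$, with estimates uniform enough to survive patching across the overlaps, is exactly the technical gluing process of the introduction and is where the one-dimensionality of the base is spent. Once (i) and (ii) are in hand, the universal property of the inductive limit turns them into the global existence and uniqueness of $\phi$ with $\Cu(\phi)=\alpha$, which is the main assertion. The consequence is then a two-sided approximate intertwining: lifting $\alpha$ and $\alpha^{-1}$ gives $\phi\colon\mathcal{A}\to\mathcal{B}$ and $\psi\colon\mathcal{B}\to\mathcal{A}$ with $\Cu(\psi\circ\phi)=\id=\Cu(\id_{\mathcal{A}})$ and symmetrically, so uniqueness forces $\psi\circ\phi\approx_{\mathrm{au}}\id_{\mathcal{A}}$ and $\phi\circ\psi\approx_{\mathrm{au}}\id_{\mathcal{B}}$; Elliott's intertwining upgrades this to a field isomorphism $\mathcal{A}\simeq\mathcal{B}$, unique up to approximate unitary equivalence, the hypothesis $\alpha([1_{\mathcal{A}}])=[1_{\mathcal{B}}]$ ensuring it is unital.
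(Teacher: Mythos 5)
Your overall route coincides with the paper's (inductive-limit reduction to building blocks, Michael selection for existence, a gluing of local unitaries for uniqueness, approximate intertwining for the isomorphism), but two of your key steps would fail as stated. First, in the existence part you propose to verify the ``completeness and convexity properties that Michael's selection theorem requires.'' The solution sets $\Theta_\alpha(x)=\{\phi_x\in\Hom_{\Ca}(A_x,B_x)\mid \Cu(\phi_x)=\alpha_x\}$ are not convex (they are not even subsets of a linear space in any useful way), so the classical convex-valued selection theorem \cite{M56a} does not apply. The paper instead invokes the second selection theorem \cite[Theorem 1.2]{M56} (\autoref{thm:selection}), valid for base spaces of dimension at most one, whose hypotheses are completeness together with the \emph{uniform path property}; verifying these is precisely where Aristov's results \cite[Theorems 2.3 and 2.4]{A99} on $\AI$-algebras enter, and it is also where the one-dimensionality of $X$ is spent --- not only in the uniqueness gluing, as you suggest. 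Moreover, lower semicontinuity of $x\mapsto\Theta_\alpha(x)$ is not a routine check: the paper derives it (\autoref{thm:lsc}) from the projective property $\rm(PP)$ together with local triviality and the uniform continuity of Robert's classification via admissible sets (\autoref{prg:uniform}); $\rm(PP)$ holds for the trivial building blocks but genuinely fails beyond them --- the paper exhibits a $\K$-theoretic obstruction over the circle --- so your unqualified ``check that $F$ is lower semicontinuous'' hides the step that forces the restriction to trivial blocks in \autoref{cor:existence}. Relatedly, your claim that the selected morphism ``factors through a finite stage $C[0,1]\otimes B_{j(i)}$'' is unjustified (it would require a perturbation/semiprojectivity argument you do not supply); the paper avoids this entirely by producing exact lifts into $\mathcal{B}$ for building-block pairs and passing to limits in the first variable via the $\rm(CP)$ inductive-limit proposition, after \cite[Proposition 5.2 (iv)]{CES11}.

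Second, your gluing mechanism for uniqueness --- ``on an overlap the two local unitaries differ by a unitary close to $1$, which can be connected to $1$ by a short path and absorbed'' --- is false as stated. If $w_i$ and $w_{i+1}$ both approximately conjugate $\phi$ to $\psi$ on the overlap, all one gets is that $u:=w_{i+1}^*w_i$ approximately commutes with the image of $\phi$, i.e.\ $u\phi u^*\simeq_{(F,2\delta)}\phi$; such a $u$ can be very far from $1$ (take any unitary in the relative commutant of the image). The essential ingredient, which your sketch omits, is Aristov's theorem \cite[Theorem 2.3]{A99} (\autoref{thm:pepite}): for interval-algebra domains there exists $\delta(F,\epsilon)>0$ such that any unitary $u$ with $u\phi u^*\simeq_{(F,\delta)}\phi$ is joined to $1$ by a path $\gamma$ along which $\gamma(s)\phi\gamma(s)^*\simeq_{(F,\epsilon)}\phi$ --- this is where the interval-algebra structure (trivial $\K_1$) of the fibers of $\mathcal{A}$ is used, and it is the content of the paper's gluing theorem \autoref{thm:gluing}, which additionally must lift the fiber path from $U_0(B_{y_0})$ to $U_0(\mathcal{B})$ and interpolate on a subinterval via \autoref{lma:fiberwise}. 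Once these two repairs are made, the remainder of your outline (a finite cover of $[0,1]$ with only consecutive overlaps, recursive modification of the $w_i$ as in \autoref{thm:uniqueness}, and the two-sided approximate intertwining for the isomorphism statement) does match the paper's proof.
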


\textbf{Acknowledgments.} The author would like to thank S. Bhattacharjee for introducing him to the world of continuous fields of $\Ca$-algebras and M. D\u{a}d\u{a}rlat for sharing an enlightening example. Also, the referee for making pertinent comments that have improved the exposition of the manuscript.

\section{An intro on continuous fields of $\Ca$-algebras}
A continuous field of $\Ca$-algebras over a topological space $X$ is a family $\{A_x\}_{x}$ of $\Ca$-algebras indexed by $X$ varying in a continuous manner that we will make precise below. The space $X$ is often referred to as the \textquoteleft base space\textquoteright\ while the family of $\Ca$-algebras are often referred to as the \textquoteleft fibers\textquoteright. Several names have appeared in the literature (e.g., $\Ca$-bundles, $C(X)$-algebras) to shed light on different pictures of this concept. For that matter, we explicitly define what we mean by continuous fields of $\Ca$-algebras and their morphisms, based on the work \cite{N96} of Nielsen as a starting point. In the process, we establish both a $\Ca$-algebraic and a topological point of view.

Recall that by a \emph{section}, we mean a map $a\colon X\rightarrow \bigsqcup_x A_x$ such that $a(x)\in A_x$, for all $x\in X$. (Here and in the rest of this work, we freely identify $A_x$ with its image in $\bigsqcup_x A_x$.) Equivalently, a section is an element of $\prod_x A_x$.

\begin{dfn}\label{dfn:ctsfields}
A \emph{continuous field of $\Ca$-algebras} is a triple $(X,\{A_x\}_{x}, \mathcal{A})$ where $X$ is a compact Hausdorff space, $\{A_x\}_{x}$ is a family of $\Ca$-algebras indexed over $X$ and $\mathcal{A}$ is a family of sections satisfying the following conditions.

(i) $\mathcal{A}$ is a $\Ca$-algebra under pointwise and supremum norm.

(ii) For each $x\in X$, the map $\pi_x\colon \mathcal{A}\rightarrow A_x$ sending $a\mapsto a(x)$ is a surjective *-homomorphism.

(iii) For each $a\in \mathcal{A}$, the map $\nu_a\colon X\rightarrow \R$ sending $x\mapsto \Vert a(x)\Vert$ is continuous.

(iv) $\mathcal{A}$ is stable by $C(X)$-multiplication. That is, for any $\lambda\in C(X)$ and any $a\in \mathcal{A}$, the section $\lambda.a\colon x\mapsto \lambda(x) a(x)$ belongs to $\mathcal{A}$.
 \end{dfn}

Note that the above definition has a \textquoteleft$\Ca$-algebraic flavor\textquoteright\ as opposed to a more traditional picture which has a more \textquoteleft topological flavor\textquoteright. Namely, continuous fields are sometimes defined as a triple $(E,X,\pi\colon \mathcal{A}\rightarrow X)$ where $E$ and $X$ are topological spaces (respectively, the global and the base space) and $\pi$ is a continuous surjection satisfying specific properties. In this picture, the fibers would consist of the family $\{\pi^{-1}(\{x\})\}_x$. See \cite{AM10} for more on this. 
 
As it happens, the conditions in our definition ensures that the field has enough sections -by (ii)-, which are well-behaved -by (iii)-, to define a relevant topology on the global space $\bigsqcup_x A_x$. 
More specifically, we consider $\tau_{\mathcal{A}}$ to be the topology on $\bigsqcup_x A_x$ generated by the basis of open sets of the form
\[
V_{\epsilon}^c:=\{(\xi,x) \in \bigsqcup_{x\in X} A_x \mid x\in V \text{ and } \Vert \xi-c(x)\Vert< \epsilon\}
\] 
where $V\subseteq X$ is an open set, $c\in \mathcal{A}$ and $\epsilon >0$. (Again, we have freely identified $A_x$ with its image in $\bigsqcup_x A_x$, and the above norm is computed in $A_x$.)

This topology allows us to identify elements of $\mathcal{A}$ with $\tau_{\mathcal{A}}$-continuous sections of the canonical surjection $\bigsqcup_x A_x\relbar\joinrel\twoheadrightarrow X$. More specifically, we obtain the following continuity criterion.

\begin{prop}[Continuity Criterion]\label{prop:cc}
Let $(X,\{A_x\}_{x}, \mathcal{A})$ be a continuous field of $\Ca$-algebras and let $a\colon X\rightarrow \bigsqcup_x A_x$ be a section. Then the following are equivalent.

(i) $a \in \mathcal{A}$.

(ii) For any $x\in X$ and any $\epsilon>0$, there exist an open neighborhood $V$ of $x$ and some $c\in\mathcal{A} $ such that $a(v)\in V_\epsilon^c$, for all $v\in V$.

(iii) $a$ is continuous with respect to $\tau_{\mathcal{A}}$. \end{prop}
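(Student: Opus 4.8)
The plan is to establish the cycle of implications $(i)\Rightarrow(iii)\Rightarrow(ii)\Rightarrow(i)$. Two of these are short and rely only on the defining properties of a continuous field, while the substantive content lies in the patching step $(ii)\Rightarrow(i)$, where compactness of $X$ enters decisively.

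For $(i)\Rightarrow(iii)$, I would fix $a\in\mathcal{A}$, a point $x$, and a basic open set $V_\epsilon^c$ containing $a(x)$, so that $x\in V$ and $\Vert a(x)-c(x)\Vert<\epsilon$. Since $\mathcal{A}$ is a $\Ca$-algebra, the section $a-c$ again lies in $\mathcal{A}$, and property (iii) of \autoref{dfn:ctsfields} makes $v\mapsto\Vert a(v)-c(v)\Vert$ continuous; hence $W:=V\cap\{v:\Vert a(v)-c(v)\Vert<\epsilon\}$ is an open neighborhood of $x$ with $a(W)\subseteq V_\epsilon^c$, proving $\tau_\mathcal{A}$-continuity. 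For $(iii)\Rightarrow(ii)$, I would fix $x$ and $\epsilon>0$ and use surjectivity of $\pi_x$ (property (ii)) to pick $c\in\mathcal{A}$ with $c(x)=a(x)$; then $a(x)\in X_\epsilon^c$, and continuity of $a$ shows that $V:=a^{-1}(X_\epsilon^c)$ is an open neighborhood of $x$ on which $\Vert a(v)-c(v)\Vert<\epsilon$, which is exactly the local approximation demanded by (ii).

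The heart of the argument is $(ii)\Rightarrow(i)$, a local-to-global gluing. Given $\epsilon>0$, condition (ii) furnishes for every $x$ an open neighborhood $V_x$ and a section $c_x\in\mathcal{A}$ with $\Vert a(v)-c_x(v)\Vert<\epsilon$ on $V_x$. Compactness of $X$ extracts a finite subcover $V_{x_1},\dots,V_{x_n}$; since $X$ is compact Hausdorff, hence normal, I may choose a partition of unity $\{\lambda_i\}$ subordinate to it and form $c:=\sum_{i}\lambda_i.c_{x_i}$, which lies in $\mathcal{A}$ by property (iv) (stability under $C(X)$-multiplication) together with closure under finite sums. Writing $a(v)=\sum_i\lambda_i(v)a(v)$ and using that $\lambda_i(v)>0$ forces $v\in\supp\lambda_i\subseteq V_{x_i}$, the triangle inequality yields $\Vert a(v)-c(v)\Vert\le\sum_i\lambda_i(v)\Vert a(v)-c_{x_i}(v)\Vert<\epsilon$ for every $v$, so $\Vert a-c\Vert_\infty\le\epsilon$ in the supremum norm. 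Taking $\epsilon=1/n$ produces sections $c_n\in\mathcal{A}$ that are Cauchy for the supremum norm and converge uniformly to $a$; completeness of the $\Ca$-algebra $\mathcal{A}$ then places the limit in $\mathcal{A}$, and since that limit agrees with $a$ sectionwise, $a\in\mathcal{A}$. Finally $(i)\Rightarrow(ii)$ is immediate by taking $c=a$ and $V=X$.

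I expect the only real obstacle to be the gluing step: one must verify that the partition-of-unity combination $\sum_i\lambda_i.c_{x_i}$ genuinely remains in $\mathcal{A}$ --- this is precisely what property (iv) is for --- and that the resulting uniform approximations interact correctly with completeness. The appeal to compactness, to pass from the purely pointwise data of (ii) to a single global approximant, and to normality of a compact Hausdorff space, to guarantee subordinate partitions of unity, are the essential topological inputs; everything else is bookkeeping with the triangle inequality and the continuity of the norm functions supplied by property (iii).
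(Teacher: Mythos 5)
Your proposal is correct, and two of its three legs ((iii)$\Rightarrow$(ii) via surjectivity of $\pi_x$, and (ii)$\Rightarrow$(i) via compactness, a subordinate partition of unity, and completeness) coincide with the paper's argument. The difference is in how the topological statement (iii) is reached: the paper proves the direct implication (ii)$\Rightarrow$(iii), which requires a three-epsilon argument (approximate $a$ near a point $v$ of $a^{-1}(V_\epsilon^c)$ by some $d\in\mathcal{A}$, transfer the estimate to $\|(c-d)(\cdot)\|$ using continuity of $\nu_{c-d}$, then recombine), whereas you prove the weaker implication (i)$\Rightarrow$(iii), which is essentially immediate since $a\in\mathcal{A}$ gives $a-c\in\mathcal{A}$ and hence continuity of $\nu_{a-c}$ by \autoref{dfn:ctsfields}(iii). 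Your cycle (i)$\Rightarrow$(iii)$\Rightarrow$(ii)$\Rightarrow$(i) is logically sufficient for the stated equivalence and is the more economical route. What the paper's extra work buys is the standalone fact that the local approximation property (ii) by itself forces $\tau_{\mathcal{A}}$-continuity, without presupposing membership in $\mathcal{A}$; this finer accounting of which axiom powers which arrow is exactly what the diagram following the proof records (norm-continuity drives (ii)$\Rightarrow$(iii), surjectivity of the $\pi_x$ drives (iii)$\Rightarrow$(ii), and the convexity/partition-of-unity argument drives (ii)$\Rightarrow$(i)), and the same direct (ii)$\Rightarrow$(iii) scheme is reused verbatim in the Hom-bundle continuity criterion of \autoref{prop:cc2}, where the analogue of your shortcut would not interact as cleanly with the projective property $\mathrm{(PP)}$. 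In your version these direct arrows are only obtained by composing through (i), so the equivalence holds but the finer implication structure is lost.
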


\begin{proof} The implication (i) $\Rightarrow$ (ii) is obvious. 

Assume that $a$ satisfies (ii). Consider a basic open set $V_\epsilon^c$ such that $a^{-1}(V_\epsilon^c)\neq \emptyset$. Let $v\in a^{-1}(V_\epsilon^c)$ and write $C=\|a(v)-c(v)\|<\epsilon$. Fix $\delta>0$ such that $3\delta+C<\epsilon$. Using the assumption, we know that there exists $d\in \mathcal{A}$ and an open neighborhood $W$ of $v$ such that $\|a(w)-d(w)\|<\delta$, for any $w\in W$. We deduce that $\|(c-d)(v)\|\leq C +\delta$. By the continuity of $\nu_{(c-d)}$, we can find an open neighborhood $Z$ of $v$ such that $\|(c-d)(z)\|\leq C +2\delta$ for all $z\in Z$. Finally, we compute that $\|a(y)-c(y)\|\leq \delta+C+2\delta<\epsilon$, for all $y \in Z\cap W$. As a result, we see that $Z\cap W\subseteq a^{-1}(V_\epsilon^c)$ and we conclude that $a$ satisfies (iii).

Assume that $a$ satisfies (iii). Let $x\in X$ and $\epsilon>0$. By the surjectivity of $\pi_x\colon \mathcal{A}\rightarrow A_x$, we can find some $c\in \mathcal{A}$ such that $c(x)=a(x)$. By the continuity of $a$, we deduce that $V:=a^{-1}(X_\epsilon^c)$ is an open neighborhood of $x$ such that $a(v)\in V^c_\epsilon$ for all $v\in V$, yielding (ii).

Assume that $a$ satisfies (ii), again. Let $\epsilon>0$. By compactness of $X$, there exist a finite open cover $\{V_k\}_1^l$ of $X$ together with finitely many elements $\{c_k\}_1^l$ of $\mathcal{A}$ such that for any $v\in X$, we have $a(v)\in (V_k)_\epsilon^{c_k}$ for some $k$. Consider a partition $\{\lambda_k\}_1^l$ subordinated to the open cover $\{V_k\}_1^l$ and construct $c_\epsilon:=\sum_1^l \lambda_kc_k$. We have that $c_\epsilon\in \mathcal{A}$ and that $\|c_\epsilon(x)-a(x)\|<\epsilon$ for all $x\in X$. A standard completeness argument yields (i).
\end{proof}

We refer to (ii) as the \emph{continuity criterion}. We highlight which properties of \autoref{dfn:ctsfields} have been used in the different implications between the above equivalent statements.
\[
\xymatrix{
\text{Membership of }\mathcal{A}\,\ar@<+0,7ex>[rr]&&\,\text{Continuity criterion}\quad\ar@<+0,7ex>[rr]^{\{\nu_a\} \text{ are continuous}}\ar@<+0,7ex>[ll]^{\text{Convexity of }\mathcal{A}}&& \quad\text{Continuity w.r.t. } \tau_{\mathcal{A}}\ar@<+0,7ex>[ll]^{\{\pi_x\} \text{ are surjective}}
}
\]

\begin{rmk}
The topology $\tau_{\mathcal{A}}$ induces the norm-topology on each $A_x$. See \cite{AM10}.
\end{rmk}

We now define morphisms between continuous fields of $\Ca$-algebras, again following the \textquoteleft$\Ca$-algebraic flavor\textquoteright.

\begin{dfn}
\label{dfn:dsc}
Let $(X,\{A_x\}_{x}, \mathcal{A})$ and $(X,\{B_x\}_{x}, \mathcal{B})$ be continuous fields of $\Ca$-algebras over the same space $X$. A \emph{continuous field morphism} is a *-homomorphism $\phi\colon \mathcal{A}\rightarrow \mathcal{B}$ which induces a family of *-homomorphisms $\{\phi_x\colon A_x\rightarrow B_x\}_x$ such that the following diagram commutes for all $x\in X$
\[
\xymatrix{
\mathcal{A}\ar[d]_{\pi_x^\mathcal{A}}\ar[r]^{\phi}&\mathcal{B}\ar[d]^{\pi_x^\mathcal{B}}\\
A_x\ar[r]_{\phi_x}&B_x
}
\]
We say that $\phi$ \emph{descends to the fibers}.
\end{dfn}

The following proposition shows a couple of statements that are respectively sufficient and necessary for a field of *-homomorphisms to be a continuous field morphism, which both turn out to be equivalent. Finally, we mention that these morphisms induce continuous maps between the global spaces of the continuous fields at hand.

\begin{prop}\label{prop:eqmorphism}
Let $(X,\{A_x\}_{x},\mathcal{A})$ and $(X,\{B_x\}_{x}, \mathcal{B})$ be continuous fields of $\Ca$-algebras. Let $(\phi_x\colon A_x\rightarrow B_x)_x$ be a family of *-homomorphisms. The following are equivalent.

(i) The map $\phi\colon \mathcal{A}\rightarrow \mathcal{B}$ sending $a\mapsto (x\mapsto \phi_x(a(x)))$ is a well-defined *-homomorphism. 

(ii) The map $\phi\colon \mathcal{A}\rightarrow \mathcal{B}$ sending $a\mapsto (x\mapsto \phi_x(a(x)))$ is a well-defined *-homomorphism descending to the fibers and preserving $C(X)$-multiplication.

(iii) For any $a\in \mathcal{A}$, the section $(\phi_x(a(x)))_x$ belongs to $\mathcal{B}$.\\
Furthermore, any of the above implies 

(iv) The map $\tilde{\phi}\colon \bigsqcup_x A_x\rightarrow \bigsqcup_x B_x$ sending $(\xi,x)\mapsto (\phi_x(\xi),x)$ is $(\tau_{\mathcal{A}},\tau_{\mathcal{B}})$-continuous.
\end{prop}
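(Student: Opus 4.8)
The plan is to treat the equivalence of (i)--(iii) as essentially formal bookkeeping and to concentrate the real work on the continuity statement (iv).

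First I would observe that, since both $\mathcal{A}$ and $\mathcal{B}$ carry pointwise algebraic operations (by \autoref{dfn:ctsfields}(i)) and each $\phi_x$ is a *-homomorphism, the assignment $a\mapsto(x\mapsto\phi_x(a(x)))$ automatically respects sums, products, adjoints and scalar multiples, computed fibre by fibre. Hence the \emph{only} content in asserting that $\phi$ is a well-defined *-homomorphism is that the resulting section lands in $\mathcal{B}$, which is exactly statement (iii). This yields (i) $\Leftrightarrow$ (iii) with no further work. For (i) $\Rightarrow$ (ii), the diagram in \autoref{dfn:dsc} commutes with the given family, because by construction $\pi_x^{\mathcal{B}}(\phi(a))=\phi(a)(x)=\phi_x(a(x))=\phi_x(\pi_x^{\mathcal{A}}(a))$; since $\pi_x^{\mathcal{A}}$ is surjective (by \autoref{dfn:ctsfields}(ii)) the induced fibre map is uniquely determined and equals $\phi_x$, so $\phi$ descends to the fibres. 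Preservation of $C(X)$-multiplication follows from linearity of each $\phi_x$: for $\lambda\in C(X)$ one has $\phi(\lambda.a)(x)=\phi_x(\lambda(x)a(x))=\lambda(x)\phi_x(a(x))=(\lambda.\phi(a))(x)$. The reverse implication (ii) $\Rightarrow$ (i) is immediate, as (ii) contains (i), so the loop among (i)--(iii) closes.

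The substantive step is (iv). Assuming (iii), I would show $\tilde\phi^{-1}(V_\epsilon^d)$ is $\tau_{\mathcal{A}}$-open for every basic set $V_\epsilon^d$ with $d\in\mathcal{B}$. Fix a point $(\xi,x)$ in this preimage, so that $x\in V$ and $C:=\|\phi_x(\xi)-d(x)\|<\epsilon$. Using surjectivity of $\pi_x^{\mathcal{A}}$, lift $\xi$ to some $c\in\mathcal{A}$ with $c(x)=\xi$; then $\phi(c)\in\mathcal{B}$ by (iii), and $\phi(c)(x)=\phi_x(\xi)$. The key estimate combines two ingredients: each $\phi_y$ is contractive, so $\|\phi_y(\zeta)-\phi_y(c(y))\|\le\|\zeta-c(y)\|$; and the norm function $\nu_{\phi(c)-d}$ is continuous by \autoref{dfn:ctsfields}(iii), hence stays below $C+\rho$ on a neighbourhood of $x$. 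Choosing $\eta,\rho>0$ with $\eta+C+\rho<\epsilon$ and letting $W$ be the intersection of $V$ with that neighbourhood, the basic set $(W)_\eta^c$ is a $\tau_{\mathcal{A}}$-neighbourhood of $(\xi,x)$ (since $\|\xi-c(x)\|=0$), and for $(\zeta,y)\in(W)_\eta^c$ the triangle inequality $\|\phi_y(\zeta)-d(y)\|\le\|\zeta-c(y)\|+\|\phi(c)(y)-d(y)\|<\eta+(C+\rho)<\epsilon$ together with $y\in W\subseteq V$ shows $\tilde\phi(\zeta,y)\in V_\epsilon^d$. Thus $(W)_\eta^c\subseteq\tilde\phi^{-1}(V_\epsilon^d)$, and the preimage is open.

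I expect the continuity estimate in (iv) to be the only genuine obstacle; the remaining implications are direct consequences of the pointwise structure of $\mathcal{A}$ and $\mathcal{B}$ and of each $\phi_x$ being a contractive *-homomorphism. The one subtlety to handle carefully is that the lift $c$ depends on the chosen point $(\xi,x)$, so the argument for (iv) must be run locally over the preimage rather than with a single global section, mirroring the pointwise structure of the continuity criterion established in \autoref{prop:cc}.
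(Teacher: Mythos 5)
Your proposal is correct and follows essentially the same route as the paper: the equivalence of (i)--(iii) is dispatched as formal bookkeeping from the pointwise structure, and (iv) is proved by lifting the point via surjectivity of $\pi_x^{\mathcal{A}}$, invoking (iii) to get $\phi(c)\in\mathcal{B}$, and using continuity of the norm function $\nu_{\phi(c)-d}$ together with contractivity of *-homomorphisms. The only difference is that you spell out the final $\eta+(C+\rho)<\epsilon$ estimate that the paper compresses into \textquotedblleft for some $\delta>0$ small enough\textquotedblright, which is a welcome clarification rather than a deviation.
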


\begin{proof} It is immediate that (ii) $\Rightarrow$ (i) $\Rightarrow$ (iii). Assume (iii). Then the map $\phi\colon \mathcal{A}\rightarrow \mathcal{B}$ is well-defined and $\phi$ is defined component-wise. Therefore it is immediate that $\phi$ is a *-homomorphism preserving the $C(X)$-multiplication.

Let us check that (ii) implies (iv). Let $V_\epsilon^b$ be a basic open set of $\tau_{\mathcal{B}}$ and let $(\xi,v)\in \tilde{\phi}^{-1}(V_\epsilon^b)$. We know that $\|\phi_v(\xi)-b(v)\|<\epsilon$. By the surjectivity of $\pi_v\colon \mathcal{A}\rightarrow A_v$, we can find some $a\in \mathcal{A}$ such that $a(v)=\xi$. By assumption $c:=(\phi_x(a(x)))_x$ is an element of $\mathcal{B}$. By the continuity of $\nu_{c-b}$, we conclude that there exists an open neighborhood $W$ of $v$ such that $W_\delta^a\subseteq \tilde{\phi}^{-1}(V_\epsilon^b)$ for some $\delta>0$ small enough, from which the result follows.
\end{proof}

We end this section by constructing the category $\mathcal{F}_{\Ca}$ to be the category of continuous fields of $\Ca$-algebras with continuous field morphisms. To ease notations, we may denote the set of morphisms from $(X,\{A_x\}_{x}, \mathcal{A})$ to $(X,\{B_x\}_{x}, \mathcal{B})$ by $\Hom_{\mathcal{F}_{\Ca}}(\mathcal{A},\mathcal{B})$ whenever the context is clear.

We remark that we only have defined morphisms between continuous fields over the same base space. Therefore, we fix the set of morphisms between continuous fields over different base spaces to be empty. It should be stated that more general constructions have been considered in the literature but we restrict ourselves to this setting that suits our needs.

\section{From local to global}
Based on the works of Ciuperca, Elliott, Robert and Santiago, it is known the Cuntz semigroup classifies *-homo\-morphisms from a class of unital $\Ca$-algebras with trivial $ \K_1$-groups to stable rank one $\Ca$-algebras. See \cite{CE08,RS10,R12}. Let us recall a weaker version that will suffice our needs. 

\begin{thm}\label{thm:RR}{\rm(\cite[Theorem 1.0.1]{R12})} Let $A,B$ be unital separable $\AI$-algebras and let $\alpha\colon\Cu(A)\rightarrow \Cu(B)$ be a $\Cu$-morphism satisfying $\alpha([1_A])\leq [1_B]$. 

Then there exists a *-homomorphism $\phi\colon A\rightarrow B$, unique up to approximate unitary equivalence, such that $\Cu(\phi)=\alpha$.
\end{thm}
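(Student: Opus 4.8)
The plan is to split the assertion into its existence and uniqueness halves and to establish each by reducing to the building blocks $M_n(C[0,1])$, where the Ciuperca--Elliott classification \cite{CE08} of *-homomorphisms out of $C[0,1]$ is available.

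For the existence of $\phi$, I would fix a presentation $A=\varinjlim(A_i,\phi_{ij})$ with each $A_i$ an interval algebra, i.e. a finite direct sum of algebras of the form $M_n(C[0,1])$, which is possible by separability, and keep $B$ fixed as the target. Rather than producing $\phi$ in one stroke, the idea is to build a sequence of *-homomorphisms $f_i\colon A_i\to B$ realizing the composites $\alpha\circ\Cu(\iota_i)$, where $\iota_i\colon A_i\to A$ is the canonical map, up to successively finer approximation at the level of $\Cu$, and then to pass to a limit by an Elliott-type approximate intertwining. The core ingredient is the single-block existence problem: given a $\Cu$-morphism $\beta\colon\Cu(M_n(C[0,1]))\to\Cu(B)$ compatible with the scale, construct a *-homomorphism inducing $\beta$ (the direct-sum case follows by additivity of $\Cu$ over summands). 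Since the Cuntz semigroup is stable under Morita equivalence, $\Cu(M_n(C[0,1]))\cong\Cu(C[0,1])$, so this reduces to the case of $C[0,1]$ treated in \cite{CE08}; the amplification back from $B\otimes\KK$ into $B$ is where the hypothesis $\alpha([1_A])\le[1_B]$ enters, for in a stable rank one algebra this inequality forces $\phi(1_A)$ to be realizable as a projection subequivalent to $1_B$ by cancellation.

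For uniqueness, suppose $\phi,\psi\colon A\to B$ induce the same $\Cu$-morphism. Working block by block, the relevant single-block statement---that two *-homomorphisms from $C[0,1]$ into a stable rank one algebra with equal induced $\Cu$-morphisms are approximately unitarily equivalent---is furnished by the uniqueness half of \cite{CE08}. The remaining work is to splice these local unitaries into a single approximate unitary equivalence of the limit maps, again by an intertwining argument: on each $A_i$ one chooses a unitary conjugating $\phi$ close to $\psi$, then corrects the choice at the next stage using the single-block uniqueness so that the unitaries converge.

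The step I expect to be the main obstacle is the existence direction, precisely because a $\Cu$-morphism records only the order and the compact-containment relation $\ll$ between Cuntz classes and not the finer data of the underlying positive elements; consequently the realizing maps $f_i$ exist only approximately, and the delicate point is to arrange that the approximate squares $f_{i+1}\circ\phi_{i\,i+1}\approx f_i$ commute tightly enough---on a chosen finite set and within a summable sequence of tolerances---for the sequence $(f_i)$ to converge to a genuine *-homomorphism inducing $\alpha$. It is here that the single-block existence and the single-block uniqueness must be invoked in tandem, the latter supplying the inner automorphisms that correct each successive stage of the intertwining.
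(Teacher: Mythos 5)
A structural point first: the paper does not prove this statement at all --- it is recalled, in weakened form, from Robert \cite{R12} as a black box on which the rest of the paper is built. So your proposal is really an attempt to re-derive (the AI case of) Robert's theorem from the single-block classification, and must be judged against the literature rather than against anything in the paper. The skeleton you propose --- classify maps out of the building blocks, then run an Elliott-style one-sided approximate intertwining, invoking single-block uniqueness to correct the approximately commuting squares --- is indeed the standard strategy and is sound in outline. The uniqueness half in particular works, and is even simpler than you make it: approximate unitary equivalence of $\phi$ and $\psi$ only asks, for each finite set and tolerance, for one unitary, which you get by pushing the finite set down to some $A_i$ and applying finite-stage uniqueness; no convergence of the unitaries needs to be arranged.

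The genuine gaps are in the existence half, at the single-block-to-interval-algebra step. (1) \textquoteleft The direct-sum case follows by additivity of $\Cu$ over summands\textquoteright\ is not a proof: lifting each block morphism $\beta_k\colon \Cu(M_{n_k}(C[0,1]))\to\Cu(B)$ separately produces maps $f_k\colon M_{n_k}(C[0,1])\to B$ whose ranges have no reason to be mutually orthogonal, so they do not assemble into a homomorphism on the direct sum. One must first realize the classes $\beta_k([1_k])$ as \emph{mutually orthogonal} projections $q_k\in B$ with $\sum_k q_k\leq 1_B$, and then lift each $\beta_k$ corestricted into $\Cu(q_kBq_k)$, checking that $\beta_k$ indeed lands in that order ideal of $\Cu(B)$. (2) Both this and your remark that $\alpha([1_A])\leq[1_B]$ \textquoteleft forces $\phi(1_A)$ to be realizable as a projection by cancellation\textquoteright\ rest on a point you never justify: why is $\alpha([1_A])$ (or $\beta_k([1_k])$, or $\beta([e_{11}\otimes 1])$) the class of a projection at all? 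An abstract $\Cu$-morphism only preserves compactness ($x\ll x$); one then needs the theorem that in a stable rank one $\Ca$-algebra the compact elements of the Cuntz semigroup are precisely the classes of projections. That is true, but it is a known nontrivial result, not mere cancellation of projections. (3) A citation issue with mathematical content: \cite{CE08} essentially supplies the \emph{uniqueness} half of the single-block classification; the \emph{existence} half (the range-of-invariant problem, already for the cone $C_0(0,1]$) is due to Robert--Santiago \cite{RS10}. Since your entire existence argument bottoms out in single-block existence, as written it has no foundation. With (1)--(3) repaired, your outline does become a legitimate, and genuinely more elementary, proof of the AI case of the theorem --- which is precisely why the general case treated in \cite{R12} (nontrivial $\K_1$ of the blocks, non-unital maps, the augmented semigroup $\Cu^\sim$) requires so much more machinery than the \textquoteleft standard arguments\textquoteright\ you invoke.
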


In the aim of extending the above result to continuous fields morphisms, let us introduce a notion of $\Cu$-morphisms \emph{descending to the fibers} analogous as the one for *-homomorphisms presented in \autoref{dfn:dsc}. 

From now on, $X$ is a compact Hausdorff metric space of dimension at most one.

\begin{dfn}
\label{dfn:dscCu}
Let $(X,\{A_x\}_{x}, \mathcal{A})$ and $(X,\{B_x\}_{x}, \mathcal{B})$ be continuous fields of $\Ca$-algebras over the same space $X$. 

A $\Cu$-morphism $\alpha\colon \Cu(\mathcal{A})\rightarrow \Cu(\mathcal{B})$ \emph{descends to the fibers}, if it induces a family \\$\{\alpha_x\colon \Cu(A_x)\rightarrow \Cu(B_x)\}_x$ such that the following diagram commutes for all $x\in X$
\[
\xymatrix{
\Cu(\mathcal{A})\ar[d]_{\Cu(\pi_x^\mathcal{A})}\ar[r]^{\alpha}&\Cu(\mathcal{B})\ar[d]^{\Cu(\pi_x^\mathcal{B})}\\
\Cu(A_x)\ar[r]_{\alpha_x}&\Cu(B_x)
}
\]
\end{dfn}

We are aiming to lift any $\Cu$-morphism $\alpha\colon \Cu(\mathcal{A})\rightarrow \Cu(\mathcal{B})$ descending to the fibers, to a unique (up to an equivalence relation) continuous field morphism $\phi\colon \mathcal{A}\rightarrow \mathcal{B}$. 

\subsection{The existence} We aim to extend the existence part of \autoref{thm:RR} to morphisms between continuous fields whose fibers fall into the class of separable unital $\AI$-algebras. We make use of a very elegant result obtained by Micheal in \cite{M56}, often referred to as (one of) the Michael's selection theorem that we recall below. 

Recall that a \emph{multi-valued function} is an assignment $\Theta\colon X\rightarrow 2^Y $ where $X,Y$ are topological spaces and $2^Y$ denotes the set of all (non-empty) subsets of $Y$. A \emph{continuous selection for $\Theta$} is a continuous map $\phi\colon X\rightarrow Y$ such that $f(x)\in \Theta(x)$ for any $x\in X$. Finally, the multi-valued function $\Theta$ is said to be \emph{lower-semicontinuous}, if for any open set $V\subseteq Y$, the set $\{x\in X\mid \Theta(x)\cap V\neq \emptyset\}$ is open in $X$. 

Below is a weaker version of the selection theorem that will suffice our needs. Let us mention that a similar version of the theorem has been used in \cite{FV25}.

\begin{thm}\cite[Theorem 1.2]{M56}\label{thm:selection} Let $X$ be a compact Hausdorff space of dimension (at most) one. Let $(Y,d)$ be a metric space and let $\mathcal{S}$ be a family of closed non-empty subsets of $Y$ satisfying the following.

(i) For all $S\in \mathcal{S}$, we have that $(S,d)$ is (path-connected and) complete.

(ii) For any $\epsilon>0$, there exists $\delta>0$ such that for any $S\in \mathcal{S}$ and any $x,y\in S$ with $d(x,y)<\delta$, then there exists a continuous path in $S$ of diameter at most $\epsilon$ joining $x$ to $y$.

Then, any lower-semicontinuous multi-valued map $\Theta\colon X\rightarrow \mathcal{S}$ admits a continuous selection $\phi\colon X\rightarrow Y$.
\end{thm}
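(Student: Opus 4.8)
The plan is to realize $\phi$ as the uniform limit of a Cauchy sequence of continuous \emph{approximate selections}. I would construct continuous maps $\phi_n\colon X\rightarrow Y$ and positive reals $\epsilon_n\downarrow 0$ with $d(\phi_n(x),\Theta(x))<\epsilon_n$ for all $x$, arranged so that $\sum_n\sup_{x}d(\phi_{n+1}(x),\phi_n(x))<\infty$. Then $(\phi_n)$ is uniformly Cauchy and converges uniformly to a continuous $\phi\colon X\rightarrow Y$. For each fixed $x$, choosing $s_n\in\Theta(x)$ with $d(\phi_n(x),s_n)<\epsilon_n$ gives $s_n\to\phi(x)$; as $(\Theta(x),d)$ is complete by (i), the limit lies in $\Theta(x)$, so $\phi$ is a genuine selection even if $Y$ itself is not complete. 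Thus all the work lies in an \emph{improvement step} and in getting the induction started.

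The engine is a lemma of the form: for every $\epsilon>0$ there is $\delta>0$ such that any continuous $f\colon X\rightarrow Y$ with $d(f(x),\Theta(x))<\delta$ for all $x$ can be replaced by a continuous $g$ with $d(g(x),\Theta(x))<\epsilon$ and $\sup_x d(f(x),g(x))$ bounded by a prescribed function of $\delta$. To build $g$, I cover $X$ by small open sets $U_i$, each equipped (via lower-semicontinuity) with a base point $p_i\in Y$ satisfying $d(p_i,f(x))<\delta$ and $d(p_i,\Theta(x))<\delta$ for $x\in U_i$. Here $\dim X\leq 1$ is decisive: the cover may be refined to have order at most two, so its nerve $N$ is a graph. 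With a subordinate partition of unity giving $\Lambda\colon X\rightarrow N$, I map each vertex $v_i$ to $p_i$ and each edge $[v_i,v_j]$ to a path joining $p_i$ to $p_j$; the point is that for overlapping patches the base points both track the continuous $f$, hence lie within a small $\delta'$ of a common fiber and of each other, so condition (ii) supplies a joining path of diameter $\leq\epsilon$. Composing, $g:=\Psi\circ\Lambda$ is the required improvement.

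To start, I produce an initial continuous approximate selection $\phi_0$ by the very same nerve construction, now using the global path-connectivity of condition (i) to join base points of overlapping patches (their mutual distance not yet being controlled) and a cover fine enough to keep the resulting error finite; this fixes some $\epsilon_0$. I then apply the improvement step repeatedly with a geometrically decreasing sequence $\epsilon_n$, at each stage using lower-semicontinuity to re-choose base points near the current $\phi_n$ so that the correction $\sup_x d(\phi_{n+1}(x),\phi_n(x))$ is dominated by a summable sequence. The limit $\phi$ furnished above is then the desired selection.

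The main obstacle is the simultaneous diameter control in the improvement step: over an edge $[v_i,v_j]$ the value $g(x)$ is an interior point of the inserted path, and one needs $d(g(x),\Theta(x))<\epsilon$ for \emph{every} $x$ in the overlap at once, not merely at a single chosen point. Threading the mesh of the cover, the gap $\delta$, and the $\epsilon$-$\delta$ constant of condition (ii) so that this holds uniformly — while keeping the moves $\sup_x d(\phi_{n+1}(x),\phi_n(x))$ summable and producing the very first approximate selection from the bare lower-semicontinuity of $\Theta$ — is the technical heart of the argument. It is precisely the bound $\dim X\leq 1$ that confines the construction to the one-skeleton of the nerve, so that the $C^0$/$LC^0$ hypotheses (i) and (ii) suffice and no higher-connectivity input is required.
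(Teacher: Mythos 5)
The paper does not prove this statement at all: it is imported verbatim from Michael's \emph{Continuous selections.\ II} (\cite[Theorem 1.2]{M56}) and used as a black box, so the only meaningful comparison is with Michael's original argument. Your outline reproduces that argument's architecture faithfully: successive continuous approximate selections built on the nerve of an order-$\leq 2$ cover (this is exactly where $\dim X\leq 1$ enters), condition (ii) controlling diameters of the inserted edge-paths, uniform Cauchy convergence, and completeness of the fibers (rather than of $Y$) to place the limit inside $\Theta(x)$. The strategy is the right one.

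As a proof, however, it has a genuine gap, and you half-name it yourself: the improvement lemma is asserted rather than proved, and that lemma \emph{is} the theorem. Two places where the sketch as written does not go through. First, condition (ii) produces a path inside a \emph{single} member $S=\Theta(x_0)$ (and requires its endpoints to lie \emph{in} $S$, not merely $\delta'$-close to it, so the base points must first be pushed into a common fiber). For the improvement estimate you then need every point of that path to be $\epsilon$-close to $\Theta(x)$ for \emph{every} $x$ in the overlap, which is not automatic: it requires covering the compact path by finitely many small balls, invoking lower semicontinuity at each center, and then shrinking the cover of $X$ accordingly --- but shrinking the cover changes the nerve, the overlaps, and hence the paths one must insert. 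Breaking this circularity (Michael does it by organizing a sequence of refinements in advance) is the actual technical content, and the sketch leaves it untouched. Second, there is a threshold problem at the start of the induction: the improvement step converts $\delta(\epsilon)$-accuracy into $\epsilon$-accuracy with $\delta(\epsilon)\leq\epsilon$, so it can only be entered once the current error is below $\sup_{\epsilon}\delta(\epsilon)$. Your $\phi_0$, built from condition (i), carries no diameter control whatsoever (path-connectedness gives none), and fineness of the cover in $X$ does not shrink diameters of paths in $Y$; so the error of $\phi_0$ is merely finite, and the iteration as described never starts. The repair --- observing that \emph{accuracy} (unlike displacement) does not need diameter control, because paths lying inside a fiber of a nearby point can be made close to all nearby fibers by the same shrinking argument --- is precisely the kind of bookkeeping the proposal defers, so the argument remains an outline of Michael's proof rather than a proof.
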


We may refer to (ii) as the \emph{uniform path property of $\mathcal{S}$}. Observe that the uniform path property already implies the path-connectedness of each $S\in \mathcal{S}$.

Let us outline how we will invoke Michael's selection theorem in our context, yielding the existence of a continuous field morphism $\phi\colon \mathcal{A}\rightarrow \mathcal{B}$ lifting a given $\Cu$-morphism $\alpha\colon \Cu(\mathcal{A})\rightarrow \Cu(\mathcal{B})$ descending to the fibers.

Firstly, we define a topology $\tau_{(\mathcal{A}\rightarrow \mathcal{B})}$ on the global space $\bigsqcup_x \Hom_{\Ca}(A_x,B_x)$, which induces the point-norm topology on each \textquoteleft fiber\textquoteright\ $\Hom_{\Ca}(A_x,B_x)$. We then show that this topology is metrizable under suitable separability conditions of the \textquoteleft fibers\textquoteright\ $A_x$. (The metric space obtained will play the role of $Y$ in the selection theorem.) Subsequently, under a suitable \emph{projective property}, we exhibit a continuity criterion for elements in $\Hom_{\mathcal{F}_{\Ca}}(\mathcal{A},\mathcal{B})$, based on the topology $\tau_{(\mathcal{A}\rightarrow \mathcal{B})}$, similarly as done in \autoref{prop:cc} for elements of $\mathcal{A}$. 

Secondly, we consider $\mathcal{S}_\alpha(x):=\{\phi_x \in \Hom_{\Ca}(A_x,B_x) \mid \Cu(\phi_x)=\alpha_x\}$ for any $x\in X$. By Robert's result, we deduce that these sets are non-empty and complete. Path-connectedness will follow from a result of \cite{A99}. 

Lastly, under assumptions on the continuous fields at hand, we show the uniform path property of $\mathcal{S}_\alpha$ and the lower-semicontinuity of the map sending $x\mapsto \mathcal{S}_\alpha(x)$. We finally apply the selection theorem to conclude our existence result via the continuity criterion. 

We end the section with a discussion on specific cases where the uniform path property and the projective property are automatically satisfied.\\

$\bullet$ \textbf{The Hom-bundle and its metrizable topology.}  Recall that a net $(\phi_\lambda\colon A\rightarrow B)_{\lambda\in\Lambda}$ of *-homomorphisms between $\Ca$-algebras $A,B$ \emph{converges in point-norm} to a *-homomorphism $\phi\colon A\rightarrow B$ if 
\[
\|\phi_{\lambda}(a)-\phi(a)\|_{B}\underset{\lambda}{\longrightarrow} 0 \text{ for any } a\in A.
\]
Equivalently, we can take $\Lambda$ to be the net of pairs $(F,\epsilon)$ where $F\subseteq A$ is a finite set and $\epsilon>0$ with the relation that $(F,\epsilon)\lesssim (F',\epsilon')$ whenever $F\subseteq F'$ and $\epsilon'\leq \epsilon$. In this setting, we say that $\phi,\psi$ \emph{agree on $F$ up to $\epsilon$}, and we write $\phi\simeq_{(F,\epsilon)}\psi$, whenever $\|\phi(a)-\psi(a)\|_{B}<\epsilon$ for any $ a\in F$. 
As a consequence, the sets
\[
(\phi,F,\epsilon):=\{\psi \in\Hom_{\Ca}(A,B) \mid \phi\simeq_{(F,\epsilon)}\psi\}
\]
form a basis of open sets for the point-norm topology on $\Hom_{\Ca}(A,B)$. 

If moreover $A$ is a separable $\Ca$-algebra, the space $(\Hom_{\Ca}(A,B),\tau_{pn})$ (where $\tau_{pn}$ denotes the point-norm topology as defined above) becomes completely metrizable. We give below an explicit construction of such a complete metric, for the reader's convenience.  

Let $(a_k)_{k\in \N}$ be a dense subset of self-adjoint elements of $A$. For any $\phi,\psi\colon A\rightarrow B$, we set 
\[
d(\phi,\psi)=\sum_{k=0}^\infty \frac{\|\phi(a_k)-\psi(a_k)\|}{2^k \|a_k\|}
\]

\begin{prop}
The above metric $d$ induces the point-norm topology $\tau_{pn}$ and turns $(\Hom_{\Ca}(A,B),d)$ into a complete metric space. 
\end{prop}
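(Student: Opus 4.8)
The plan is to show two things separately: that the metric $d$ is well-defined and induces $\tau_{pn}$, and that $(\Hom_{\Ca}(A,B),d)$ is complete.

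First I would verify that $d$ is a genuine metric that is finite-valued. Finiteness is immediate since $\|\phi(a_k)-\psi(a_k)\|\leq \|\phi(a_k)\|+\|\psi(a_k)\|\leq 2\|a_k\|$ (as $*$-homomorphisms are contractive), so each summand is bounded by $2^{1-k}$ and the series converges. Symmetry and the triangle inequality are inherited termwise from the norm on $B$. For the separation axiom $d(\phi,\psi)=0\Rightarrow\phi=\psi$, I would use that the $(a_k)$ are dense among self-adjoint elements: if $\phi$ and $\psi$ agree on all $a_k$ they agree on all self-adjoint elements by continuity, hence on all of $A$ by linearity (writing $a=\tfrac{1}{2}(a+a^*)+\tfrac{i}{2}\cdot\tfrac{1}{i}(a-a^*)$ as a combination of self-adjoints).

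Next I would show $d$ induces $\tau_{pn}$ by comparing basic neighborhoods. For one direction, a $d$-ball $\{\psi\mid d(\phi,\psi)<r\}$ forces $\|\phi(a_k)-\psi(a_k)\|$ small for each fixed $k$ (since that term is dominated by $d$), so it contains a point-norm neighborhood; more carefully, given a basic point-norm set $(\phi,F,\epsilon)$ with $F=\{b_1,\dots,b_n\}$, I would approximate each $b_j$ within $\epsilon/4$ by some $a_{k_j}$ from the dense set, and then choose $r$ small enough that $d(\phi,\psi)<r$ forces $\|\phi(a_{k_j})-\psi(a_{k_j})\|<\epsilon/2$ for all $j$; the triangle inequality together with contractivity of $\phi,\psi$ then yields $\|\phi(b_j)-\psi(b_j)\|<\epsilon$. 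Conversely, for any $r>0$ I truncate the series at an index $N$ with $\sum_{k>N}2^{1-k}<r/2$, and the point-norm set $(\phi,\{a_0,\dots,a_N\},\eta)$ for suitable $\eta$ sits inside the $d$-ball of radius $r$. This establishes that the two topologies have comparable neighborhood bases at every point.

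Finally, for completeness, I would take a $d$-Cauchy sequence $(\phi_n)$. By the comparison above, $d$-Cauchy implies that $(\phi_n(a_k))_n$ is Cauchy in $B$ for each $k$, and in fact $(\phi_n(a))_n$ is Cauchy for every self-adjoint $a$ (approximate $a$ by the $a_k$ and use a standard three-epsilon argument, exploiting that all $\phi_n$ are contractive so the approximation error is controlled uniformly in $n$). Since $B$ is complete, $\phi(a):=\lim_n\phi_n(a)$ exists; extending by linearity to all of $A$, I would check that $\phi$ is a $*$-homomorphism, the point being that each defining relation ($\phi(ab)=\phi(a)\phi(b)$, $\phi(a^*)=\phi(a)^*$, linearity) passes to the limit by joint continuity of multiplication and involution on bounded sets together with the uniform contractivity bound $\|\phi_n\|\leq 1$. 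Then $d(\phi_n,\phi)\to 0$ by dominated convergence on the series. I expect the main obstacle to be the careful bookkeeping in showing that the limiting map is genuinely multiplicative and bounded rather than merely linear; the uniform contractivity of the $\phi_n$ is the key fact that makes the pointwise limit well-behaved, and I would emphasize that this is what prevents the limit from escaping $\Hom_{\Ca}(A,B)$.
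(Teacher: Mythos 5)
Your proof is correct and follows essentially the same route as the paper, which in fact leaves the metric axioms and the topology comparison to the reader as ``standard arguments'' and only writes out the completeness part --- precisely the argument you give (Cauchyness of $(\phi_n(a_k))_n$ for each $k$, extension of the limit from the dense self-adjoint sequence, and passage of the $*$-homomorphism identities to the limit using uniform contractivity of the $\phi_n$). One small point to tidy: in your neighborhood comparison, a non-self-adjoint $b_j\in F$ cannot be approximated directly by a self-adjoint $a_{k_j}$, so you should first split $b_j$ into real and imaginary parts and control each separately (exactly the decomposition you already use for the separation axiom); this is a one-line fix and does not affect the structure of the argument.
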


\begin{proof}
The fact that the metric $d$ is well-defined and induces $\tau_{pn}$ can be checked via standard arguments and is left to the reader. Regarding completeness, we observe that any $d$-Cauchy sequence $(\phi_n)_n$ induces a $\|.\|_B$-Cauchy sequence $(\phi_n(a_k))_n$ for any self-adjoint $a_k\in A$ involved in the construction of $d$. As a consequence, we define $\phi\colon A\rightarrow B$ via the assignment $a_k\mapsto \lim_n(\phi_n(a_k))$ that we have extended continuously from the dense sequence $(a_k)_k$ to $A$. It is readily seen that $\phi$ is a linear map preserving the *-operation. Finally, for any $\epsilon>0$, any $k,l\in \N$, we can find $m_\epsilon$ such $\|\phi(a_k)\phi(a_l)-\phi_{m_\epsilon}(a_k)\phi_{m_\epsilon}(a_l) \|<\epsilon/2$ and $\|\phi_{m_\epsilon}(a_ka_l) -  \phi(a_ka_l) \| <\epsilon/2$ which yields that $\phi$ is multiplicative on the dense subset $\{a_k\}_k$. We conclude that $\phi$ is a *-homomorphism by a standard continuity argument.  
\end{proof}

We are now defining the \emph{Hom-bundle topology}. Let $(X,\{A_x\}_{x}, \mathcal{A})$ and $(X,\{B_x\}_{x}, \mathcal{B})$ be continuous fields of $\Ca$-algebras over a compact Hausdorff metric space. Similarly as done for the global space of a continuous field, we wish to define a relevant topology on the global space $\bigsqcup_x \Hom_{\Ca}(A_x,B_x)$ and obtain a continuity criterion. 

We consider $\tau_{(\mathcal{A}\rightarrow \mathcal{B})}$ to be the topology on $\bigsqcup_x \Hom_{\Ca}(A_x,B_x)$ generated by the basis of open sets of the form
\[
V_{(F,\epsilon)}^\chi:=\{ (\xi,x)\in \bigsqcup_{x\in X} \Hom_{\Ca}(A_x,B_x)\mid x\in V \text{ and } \xi\simeq_{(F,\epsilon)}\chi_x\}
\] 
where $V\subseteq X$ is an open set, $\chi\in \Hom_{\mathcal{F}_{\Ca}}(\mathcal{A},\mathcal{B})$, $F\subset \mathcal{A}$ is a finite set and $\epsilon >0$.

This topology will allow us to link elements of $\Hom_{\mathcal{F}_{\Ca}}(\mathcal{A},\mathcal{B})$ with $\tau_{(\mathcal{A}\rightarrow \mathcal{B})}$-continuous sections of the canonical surjection $\bigsqcup_x \Hom_{\Ca}(A_x,B_x)\relbar\joinrel\twoheadrightarrow X$. In the process, we are obtaining a continuity criterion alike to the one of \autoref{prop:cc}. First, we need the following.

\begin{dfn}
Let $(X,\{A_x\}_{x}, \mathcal{A})$ and $(X,\{B_x\}_{x}, \mathcal{B})$ be continuous fields of $\Ca$-algebras. We say that $\Hom_{\mathcal{F}_{\Ca}}(\mathcal{A},\mathcal{B})$ has the \emph{projective property} if 
\[
\mathrm{(PP)}: \quad\forall\, x\in X,\forall\,  \phi_x\in \Hom_{\Ca}(A_x,B_x), \exists \chi \in \Hom_{\mathcal{F}_{\Ca}}(\mathcal{A},\mathcal{B}) \text{ such that } \chi_x=\phi_x.
\]
\end{dfn}

\begin{lma}\label{lma:approxextendmorph}
Let $\phi,\psi\in \Hom_{\mathcal{F}_{\Ca}}(\mathcal{A},\mathcal{B})$ be continuous field morphisms between continuous fields $(X,\{A_x\}_{x}, \mathcal{A})$ and $(X,\{B_x\}_{x}, \mathcal{B})$. Let $F\subseteq \mathcal{A}$ be a finite set and let $\epsilon>0$. 

Assume that $\phi_x\simeq_{(F_x,\epsilon)}\psi_x$, for some $x\in X$.
Then there exists an open neighborhood $V$ of $x$ such that $\phi_v\simeq_{(F_v,\epsilon)}\psi_v$, for all $v\in V$.
\end{lma}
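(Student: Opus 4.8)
The plan is to reduce the statement to the continuity of the norm functions guaranteed by axiom (iii) of \autoref{dfn:ctsfields}, exploiting the fact that both $\phi$ and $\psi$ descend to the fibers. The essential observation is that the fibrewise discrepancy between $\phi$ and $\psi$ at an element $a\in F$ can be recorded as the norm of a single honest section of $\mathcal{B}$, so that the open condition we seek is inherited directly from the built-in continuity of the field $\mathcal{B}$.

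First I would fix $a\in F$ and use that $\phi$ and $\psi$ descend to the fibers to write, for every $v\in X$, the identities $\phi_v(a(v))=\pi_v^{\mathcal{B}}(\phi(a))$ and $\psi_v(a(v))=\pi_v^{\mathcal{B}}(\psi(a))$. Consequently, the difference $\phi_v(a(v))-\psi_v(a(v))$ is exactly the evaluation at $v$ of the section $\phi(a)-\psi(a)$, which lies in $\mathcal{B}$ since $\mathcal{B}$ is a $\Ca$-algebra and $\phi(a),\psi(a)\in\mathcal{B}$. Hence
\[
\|\phi_v(a(v))-\psi_v(a(v))\|_{B_v}=\nu_{\phi(a)-\psi(a)}(v),
\]
and axiom (iii) asserts that $\nu_{\phi(a)-\psi(a)}$ is a continuous real-valued function on $X$.

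Next I would translate the hypothesis: $\phi_x\simeq_{(F_x,\epsilon)}\psi_x$ says precisely that $\nu_{\phi(a)-\psi(a)}(x)<\epsilon$ for each $a\in F$. By continuity of each of these functions I would pick, for every $a\in F$, an open neighborhood $V_a$ of $x$ on which $\nu_{\phi(a)-\psi(a)}<\epsilon$. Since $F$ is finite, the intersection $V:=\bigcap_{a\in F}V_a$ is again an open neighborhood of $x$, and on $V$ all these inequalities hold simultaneously, which is exactly the assertion $\phi_v\simeq_{(F_v,\epsilon)}\psi_v$ for all $v\in V$.

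There is no genuine obstacle here: the whole argument rests on rewriting the fibrewise comparison as the norm of a single section of $\mathcal{B}$ and invoking the continuity axiom of a continuous field. The only point requiring mild care is the passage from one neighborhood per element of $F$ to a single common neighborhood, which is harmless precisely because $F$ is assumed \emph{finite} — an infinite $F$ would break the argument, since an arbitrary intersection of open sets need not be open.
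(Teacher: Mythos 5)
Your proof is correct and follows essentially the same route as the paper: both arguments rewrite the fibrewise discrepancy as the norm of the section $\phi(a)-\psi(a)\in\mathcal{B}$, invoke the continuity axiom (iii) of \autoref{dfn:ctsfields}, and use finiteness of $F$ to intersect the resulting neighborhoods (the paper phrases this with a maximum $C$ and a slack $\delta$ with $C+\delta<\epsilon$, whereas you use openness of the sublevel sets $\{\nu_{\phi(a)-\psi(a)}<\epsilon\}$ directly, which is merely a cosmetic difference).
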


\begin{proof}
Let $C:=\max_{a\in F}\|\phi(a)(x)-\psi(a)(x)\|$. Observe that $C<\epsilon$. Now, fix $\delta>0$ such that $\delta+C<\epsilon$. By continuity of the norm and finiteness of $F$, we can find an open neighborhood $V$ of $x$ such that $\|\phi(a)(v)-\psi(a)(v)\|<C+\delta<\epsilon$ for any $v\in V$ and any $a\in F$. 
\end{proof}

We are now ready for the continuity criterion for continuous field morphisms. (Again, we freely identify $\Hom_{\Ca}(A_x,B_x)$ with its image in $\bigsqcup_x \Hom_{\Ca}(A_x,B_x)$.)

\begin{prop}[Continuity Criterion]\label{prop:cc2}
Let $(X,\{A_x\}_{x}, \mathcal{A})$ and $(X,\{B_x\}_{x}, \mathcal{B})$ be continuous fields of $\Ca$-algebras. Let $(\phi_x)_{x}$ be a family of *-homomorphisms. Consider the following statements.

(i) The map $\phi\colon \mathcal{A}\rightarrow  \mathcal{B}$ sending $a\rightarrow (x\mapsto \phi_x(a(x)))$ is a well-defined continuous field morphism.

(ii) For any $x\in X$, any $F\subseteq \mathcal{A}$ finite and any $\epsilon>0$, there exist an open neighborhood $V$ of $x$ and some $\chi\in\Hom_{\mathcal{F}_{\Ca}}(\mathcal{A},\mathcal{B})$ such that $\phi_v\in V_{(F,\epsilon)}^\chi$, for all $v\in V$.

(iii) The map from $X\rightarrow \bigsqcup_x \Hom_{\Ca}(A_x,B_x)$ sending $x\mapsto \phi_x$ is continuous with respect to $\tau_{(\mathcal{A}\rightarrow \mathcal{B})}$. \\

Then (i) and (ii) are equivalent and imply (iii). The converse holds whenever the set $\Hom_{\mathcal{F}_{\Ca}}(\mathcal{A},\mathcal{B})$ satisfies $\rm (PP)$.

\end{prop}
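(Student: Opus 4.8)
The plan is to prove the equivalence of (i) and (ii) by mirroring the corresponding implications in \autoref{prop:cc}, then to establish (ii)$\Rightarrow$(iii) and, assuming (PP), (iii)$\Rightarrow$(ii).

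\textbf{The implication (i)$\Rightarrow$(ii).} Suppose $\phi$ is a well-defined continuous field morphism. Given $x\in X$, a finite $F\subseteq \mathcal{A}$ and $\epsilon>0$, I simply take $\chi:=\phi$ itself and let $V:=X$. Then for every $v\in X$ we trivially have $\phi_v\simeq_{(F_v,\epsilon)}\chi_v=\phi_v$, so $\phi_v\in V_{(F,\epsilon)}^\chi$. This direction is immediate, exactly as ``(i)$\Rightarrow$(ii) is obvious'' in \autoref{prop:cc}; the only content is that $\phi\in\Hom_{\mathcal{F}_{\Ca}}(\mathcal{A},\mathcal{B})$ is available as a witness $\chi$.

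\textbf{The implication (ii)$\Rightarrow$(i).} This is the substantive direction, and I expect it to be the main obstacle, since one must promote pointwise/local control of the family $(\phi_x)_x$ to the assertion that the globally assembled map $\phi\colon a\mapsto(x\mapsto\phi_x(a(x)))$ actually lands in $\mathcal{B}$. By \autoref{prop:eqmorphism}(iii), it suffices to show that for each fixed $a\in\mathcal{A}$ the section $(\phi_x(a(x)))_x$ belongs to $\mathcal{B}$, and for this I will verify the continuity criterion \autoref{prop:cc}(ii) for that section. Fix $a\in\mathcal{A}$, a point $x\in X$ and $\epsilon>0$; apply hypothesis (ii) with $F=\{a\}$ to obtain an open neighborhood $V$ of $x$ and a genuine morphism $\chi\in\Hom_{\mathcal{F}_{\Ca}}(\mathcal{A},\mathcal{B})$ with $\phi_v\simeq_{(\{a(v)\},\epsilon)}\chi_v$ for all $v\in V$, i.e.\ $\|\phi_v(a(v))-\chi_v(a(v))\|<\epsilon$ for all $v\in V$. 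Since $\chi$ is a morphism, the section $c:=\chi(a)=(\chi_v(a(v)))_v$ lies in $\mathcal{B}$, and the previous estimate says precisely that $\phi_v(a(v))\in V_\epsilon^c$ for all $v\in V$. Thus the section $(\phi_x(a(x)))_x$ satisfies \autoref{prop:cc}(ii), hence belongs to $\mathcal{B}$. As $a$ was arbitrary, \autoref{prop:eqmorphism}(iii) gives that $\phi$ is a well-defined *-homomorphism descending to the fibers, i.e.\ a continuous field morphism, establishing (i).

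\textbf{The implications (ii)$\Leftrightarrow$(iii).} For (ii)$\Rightarrow$(iii), continuity of $x\mapsto\phi_x$ is a direct unwinding of definitions: given a basic open set $V_{(F,\epsilon)}^\chi$ containing $\phi_{x_0}$, I must produce an open neighborhood of $x_0$ mapping into it. Applying (ii) at $x_0$ with this $F,\epsilon$ yields a neighborhood $V$ and a morphism $\chi'$ with $\phi_v\simeq_{(F_v,\epsilon)}\chi'_v$ on $V$; combining with $\phi_{x_0}\simeq_{(F,\epsilon)}\chi_{x_0}$ and invoking \autoref{lma:approxextendmorph} (applied to the two morphisms $\chi,\chi'$, or directly to control $\|\chi_v(a(v))-\chi'_v(a(v))\|$ near $x_0$) together with a standard triangle-inequality shrinking of $\epsilon$, I obtain a possibly smaller neighborhood on which $\phi_v\simeq_{(F_v,\epsilon)}\chi_v$, i.e.\ the section stays inside $V_{(F,\epsilon)}^\chi$. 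For the converse (iii)$\Rightarrow$(ii) under (PP): given $x$, $F$ and $\epsilon$, continuity of the section $x\mapsto\phi_x$ and property (PP) supply a global morphism $\chi$ with $\chi_x=\phi_x$ (so that $\phi_x\simeq_{(F_x,\epsilon)}\chi_x$ trivially), and then $V:=\{y\mid \phi_y\in V_{(F,\epsilon)}^\chi\}$ is the preimage of a $\tau_{(\mathcal{A}\rightarrow\mathcal{B})}$-open set under the continuous section, hence an open neighborhood of $x$ on which $\phi_v\in V_{(F,\epsilon)}^\chi$, which is exactly (ii). The role of (PP) here is precisely the analogue of the surjectivity of $\pi_x$ used in the ``(iii)$\Rightarrow$(ii)'' step of \autoref{prop:cc}: without it there may be no global morphism witnessing the value $\phi_x$, so the basic open set needed to realize (ii) need not exist.
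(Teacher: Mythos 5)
Your proof is correct and takes essentially the same route as the paper: (ii)$\Rightarrow$(i) by feeding $F=\{a\}$ into the continuity criterion of \autoref{prop:cc} and concluding via \autoref{prop:eqmorphism}, (ii)$\Rightarrow$(iii) by a triangle-inequality argument plus \autoref{lma:approxextendmorph}, and (iii)$\Rightarrow$(ii) using $\rm (PP)$ as the substitute for the surjectivity of $\pi_x$. The only looseness is the $\epsilon$-bookkeeping in (ii)$\Rightarrow$(iii): one cannot literally apply hypothesis (ii) with the tolerance $\epsilon$ itself, but must first set $C:=\max_{a\in F}\|\phi_{x_0}(a(x_0))-\chi(a)(x_0)\|<\epsilon$ and apply (ii) with a $\delta$ satisfying $C+3\delta<\epsilon$ --- which is exactly what the paper does and what your ``standard triangle-inequality shrinking'' remark implicitly requires.
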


\begin{proof}
 Let us write $f\colon x\mapsto \phi_x$ during the proof. The implication (i) $\Rightarrow$ (ii) is obvious. 

Assume that $(\phi_x)_x$ satisfies (ii). Let $a\in\mathcal{A}$. By assumption, we know that for any $x\in X$ and any $\epsilon>0$, there exists $\chi\in\Hom_{\mathcal{F}_{\Ca}}(\mathcal{A},\mathcal{B})$ and an open neighborhood $V$ of $x$ such that $\|\chi(a)(v)-\phi_v(a(v))\|<\epsilon$, for all $v\in V$. Consequently, for any $x\in X$ and any $\epsilon>0$, there exist $b:=\chi(a)\in \mathcal{B}$ and an open neighborhood $V$ of $x$ such that $\phi_v(a(v))\in V_\epsilon^b$ for all $v\in V$. By the continuity criterion of \autoref{prop:cc}, we deduce that $(\phi_x(a(x)))_x\in \mathcal{B}$ for any $a\in \mathcal{A}$. The equivalence between (i) and (ii) now follows from \autoref{prop:eqmorphism}.

Assume that $(\phi_x)_x$ satisfies (ii), again. Consider a basic open set $V_{(F,\epsilon)}^\chi$ such that $f^{-1}(V_{(F,\epsilon)}^\chi)\neq \emptyset$. Let $v\in f^{-1}(V_{(F,\epsilon)}^\chi)$. We have $\phi_v\simeq_{(F_v,\epsilon)}\chi_v$. Write $C:=\max_{a\in F}\|\phi_v(a(v))-\chi(a)(v)\|$. Observe that $C<\epsilon$. Fix $\delta>0$ be such that $3\delta+C<\epsilon$. Using the assumption, we know that there exist $\psi\in \Hom_{\mathcal{F}_{\Ca}}(\mathcal{A},\mathcal{B})$ and an open neighborhood $W$ of $v$ such that $\phi_w\simeq_{(F_w,\delta)}\psi_w$ for all $w\in W$. We deduce that $\chi_v\simeq_{(F_v,C+\delta)}\psi_v$. By \autoref{lma:approxextendmorph}, we can find an open neighborhood $Z$ of $v$  such that $\chi_z\simeq_{(F_z,C+2\delta)}\psi_z$ for all $z\in Z$. Finally, we compute that $\phi_y\simeq_{(F_y,C+3\delta<\epsilon)}\chi_y$ for any $y\in Z\cap W$. As a result, we see that $Z\cap W\subseteq f^{-1}(V_{(F,\epsilon)}^\chi)$ and we conclude that $(\phi_x)_x$ satisfies (iii).

Assume that $(\phi_x)_x$ satisfies (iii) and that $\Hom_{\mathcal{F}_{\Ca}}(\mathcal{A},\mathcal{B})$ satisfies $\rm (PP)$. Let $x\in X$, let $F\subseteq \mathcal{A}$ be a finite set and let $\epsilon>0$. Using $\rm (PP)$, we can find some $\chi\in \Hom_{\mathcal{F}_{\Ca}}(\mathcal{A},\mathcal{B})$ such that $\chi_x=\phi_x$. By the continuity of $f$, we deduce that $V:=f^{-1}(X_{(F,\epsilon)}^\chi)$ is an open neighborhood of $x$ such that $f(v)\in V_{(F,\epsilon)}^\chi$ for all $v\in V$, yielding (ii).
\end{proof}

We highlight the above implications in the following diagram.
\[
\xymatrix{
\text{Membership of }\Hom_{\mathcal{F}_{\Ca}}(\mathcal{A},\mathcal{B})\ar@<+0,7ex>[r]&\text{Continuity criterion}\ar@<-0,7ex>[d]^{}\ar@<0,7ex>[l]^{}\\
&\text{Continuity w.r.t. } \tau_{(\mathcal{A}\rightarrow \mathcal{B})}\ar@<-0,7ex>[u]_{\rm (PP) }
}
\]

\begin{rmk}
(i) The topology $\tau_{(\mathcal{A}\rightarrow \mathcal{B})}$ induces the point-norm topology on each Hom-set $\Hom_{\Ca}(A_x,B_x)$.

(ii) In the case that $(X,\{A_x\}_{x}, \mathcal{A})$ is a continuous field of separable $\Ca$-algebras, the topology $\tau_{(\mathcal{A}\rightarrow \mathcal{B})}$ is generated by the basis of open sets of the form 
\[
V_{\epsilon}^\chi:=\{ (\xi,x)\in \bigsqcup_{x\in X} \Hom_{\Ca}(A_x,B_x)\mid x\in V \text{ and } d_x(\xi,\chi_x)<\epsilon\}
\] 
where $V\subseteq X$ is an open set,  $\epsilon >0$, $\chi\in \Hom_{\mathcal{F}_{\Ca}}(\mathcal{A},\mathcal{B})$ and $d_x$ is any complete metric on $\Hom_{\Ca}(A_x,B_x)$ as constructed before. 
\end{rmk}

\begin{prg}\label{prg:maitrise}Let us now turn to metrizability of $\tau_{(\mathcal{A}\rightarrow \mathcal{B})}$. Assume that $(X,\{A_x\}_{x}, \mathcal{A})$ is a continuous field of separable $\Ca$-algebras over a compact Hausdorff metric space $(X,d_X)$. The Hom-bundle $(\bigsqcup_x \Hom_{\Ca}(A_x,B_x),\tau_{(\mathcal{A}\rightarrow \mathcal{B})})$ becomes metrizable as follows. 

Fix any complete metric $d_x$ on the set $\Hom_{\Ca}(A_x,B_x)$ for any $x\in X$. For any $(\xi,v),(\zeta, w)\in \bigsqcup_x \Hom_{\Ca}(A_x,B_x)$, we set 
\[
d((\xi,v),(\zeta, w))=\inf_{\chi\in \Hom_{\mathcal{F}_{\Ca}}(\mathcal{A},\mathcal{B})}\{d_\chi((\xi,v),(\zeta, w))\}+d_X(v,w).
\]
where $d_\chi((\xi,v),(\zeta, w)):= \max(d_v(\xi,\chi_v),d_w(\zeta,\chi_w))$.
It can be checked that $d$ is a well-defined metric on $\Hom_{\Ca}(A,B)$. 
\end{prg}

\begin{prop}
The Hom-bundle $(\bigsqcup_x \Hom_{\Ca}(A_x,B_x),\tau_{(\mathcal{A}\rightarrow \mathcal{B})})$ is metrizable whenever $X$ is a second-countable compact Hausdorff space and $(X,\{A_x\}_{x}, \mathcal{A})$ has separable fibers.

More precisely, any of the above metrics $d$ induces $\tau_{(\mathcal{A}\rightarrow \mathcal{B})}$. 
\end{prop}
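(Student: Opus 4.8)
The plan is to verify that the metric $d$ of \autoref{prg:maitrise} and the topology $\tau_{(\mathcal{A}\rightarrow \mathcal{B})}$ have the same neighbourhood filter at every point of the global space, by comparing, at a fixed point $(\xi,x)$, the metric balls $B_d((\xi,x),r)$ with the basic sets $V_\epsilon^\chi$. Since $X$ is second-countable, compact and Hausdorff, Urysohn's theorem provides the metric $d_X$ used in the construction, and separability of the fibres makes each $\Hom_{\Ca}(A_x,B_x)$ completely metrizable. It will be convenient to fix the per-fibre metrics coherently: choosing a sequence $(a_k)_k$ dense in $\mathcal{A}$ (separability of $\mathcal{A}$ follows from that of the fibres together with second-countability of $X$) and setting $d_x(\phi,\psi)=\sum_k 2^{-k}\min(1,\|\phi(a_k(x))-\psi(a_k(x))\|)$, one obtains complete metrics inducing the point-norm topology on each fibre for which, crucially, the map $y\mapsto d_y(\chi_y,\psi_y)$ is \emph{continuous} for any $\chi,\psi\in\Hom_{\mathcal{F}_{\Ca}}(\mathcal{A},\mathcal{B})$; this is immediate from property (iii) of \autoref{dfn:ctsfields} applied to the sections $(\chi-\psi)(a_k)\in\mathcal{B}$ and uniform convergence of the series. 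As $\tau_{(\mathcal{A}\rightarrow \mathcal{B})}$ does not depend on the fibrewise metrics (being defined through the relations $\simeq_{(F,\epsilon)}$), it suffices to treat this coherent choice. I also record that the very fact that $d$ is a genuine metric, as asserted in \autoref{prg:maitrise}, forces $\{\chi_x\mid \chi\in\Hom_{\mathcal{F}_{\Ca}}(\mathcal{A},\mathcal{B})\}$ to be dense in $\Hom_{\Ca}(A_x,B_x)$ for each $x$ (otherwise $d((\xi,x),(\xi,x))>0$); this density, automatic under $\mathrm{(PP)}$, is used freely below.

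For the inclusion $\tau_d\subseteq \tau_{(\mathcal{A}\rightarrow \mathcal{B})}$ I would show that each ball is $\tau_{(\mathcal{A}\rightarrow \mathcal{B})}$-open. Fixing $(\xi,x)$ in a ball, pick $\chi$ with $d_x(\xi,\chi_x)<\epsilon$ for small $\epsilon$ and let $V$ be a small $d_X$-ball about $x$. For $(\zeta,w)\in V_\epsilon^\chi$ one has, simply by substituting $\psi=\chi$ in the infimum defining $d$,
\[
d((\xi,x),(\zeta,w))\le \max\bigl(d_x(\xi,\chi_x),d_w(\zeta,\chi_w)\bigr)+d_X(x,w)<\epsilon+\diam(V),
\]
so $V_\epsilon^\chi\subseteq B_d((\xi,x),r)$ once $\epsilon$ and $\diam(V)$ are chosen with sum below $r$. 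This is the routine half.

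The substantive inclusion is $\tau_{(\mathcal{A}\rightarrow \mathcal{B})}\subseteq \tau_d$, i.e. that every $V_\epsilon^\chi$ is $\tau_d$-open. Given $(\xi,x)\in V_\epsilon^\chi$, put $\eta:=\epsilon-d_x(\xi,\chi_x)>0$ and take $(\zeta,w)\in B_d((\xi,x),r)$. Unwinding the infimum yields a connector $\psi\in\Hom_{\mathcal{F}_{\Ca}}(\mathcal{A},\mathcal{B})$ with $d_x(\xi,\psi_x)<r$, $d_w(\zeta,\psi_w)<r$ and $d_X(x,w)<r$; the base estimate immediately gives $w\in V$ for $r$ small. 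The triangle inequality then yields
\[
d_w(\zeta,\chi_w)\le d_w(\zeta,\psi_w)+d_w(\psi_w,\chi_w)<r+d_w(\psi_w,\chi_w),
\]
while at the fibre $x$ one has $d_x(\psi_x,\chi_x)\le d_x(\psi_x,\xi)+d_x(\xi,\chi_x)<r+(\epsilon-\eta)$. Transporting this bound from $x$ to the nearby $w$ through the continuity of $y\mapsto d_y(\psi_y,\chi_y)$ would give $d_w(\psi_w,\chi_w)<\epsilon-\eta+O(r)$ and hence $d_w(\zeta,\chi_w)<\epsilon$ for $r$ small, proving $B_d((\xi,x),r)\subseteq V_\epsilon^\chi$.

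The main obstacle is precisely this last transport step: the connector $\psi$ depends on the point $(\zeta,w)$, so a single modulus of continuity for $y\mapsto d_y(\psi_y,\chi_y)$ at $x$ cannot be read off one application of \autoref{lma:approxextendmorph}; what is needed is a control of $d_w(\psi_w,\chi_w)$ that is uniform over all admissible connectors $\psi$. I would handle this by reducing, via the uniform smallness of the tail of the series defining $d_x$, to the finitely many norm functions $\nu_{(\chi-\psi)(a_k)}=\|(\chi-\psi)(a_k)(\cdot)\|$ with $k$ below a threshold, and then invoking \autoref{lma:approxextendmorph} for the fixed $\chi$ against the local approximants, together with the compactness of $X$, to extract a neighbourhood of $x$ on which the estimate holds simultaneously for every connector constrained by $d_x(\psi_x,\xi)<r$ and $d_w(\zeta,\psi_w)<r$. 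Establishing this cross-fibre equicontinuity is the technical heart of the proposition; everything else is bookkeeping with the triangle inequality and the metric $d_X$ on the base.
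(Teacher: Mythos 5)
The paper offers no argument to compare against here---its ``proof'' is delegated to standard arguments left to the reader---so your proposal has to stand on its own. The routine half is fine: granted that $d$ is a metric, substituting $\psi=\chi$ in the infimum does show that every $d$-ball is $\tau_{(\mathcal{A}\rightarrow\mathcal{B})}$-open, and your observation that positivity of $d$ already forces fibrewise density of $\{\chi_x\mid\chi\in\Hom_{\mathcal{F}_{\Ca}}(\mathcal{A},\mathcal{B})\}$ is a genuine catch that the paper glosses over. But the step you flag as ``the technical heart'' is not a missing-but-true lemma: the uniform-over-connectors control you need is false, and no combination of \autoref{lma:approxextendmorph} with compactness of $X$ can produce it. The reason is structural: the infimum in \autoref{prg:maitrise} ranges over \emph{all} continuous field morphisms, and such morphisms can pass exactly through any two prescribed points lying over distinct base points, however close those base points are. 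A modulus of continuity established for finitely many \emph{fixed} global morphisms says nothing about connectors that oscillate on the scale $d_X(x,w)$, which shrinks with the point $(\zeta,w)$.

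Concretely, let $\mathcal{A}=\mathcal{B}=C([0,1])\otimes C[0,1]$ be the trivial fields over $X=[0,1]$, so that $\mathrm{(PP)}$, fibrewise density, and all hypotheses of the proposition hold. Unital elements of $\Hom_{\Ca}(C[0,1],C[0,1])$ are the maps $f\mapsto f\circ h$ with $h\colon[0,1]\to[0,1]$ continuous. Given $x\neq w$ and two such morphisms $\xi\colon f\mapsto f\circ h_0$, $\zeta\colon f\mapsto f\circ h_1$, set $g(t)=\min\bigl(1,|t-x|/|w-x|\bigr)$ and $\psi_t(f):=f\circ\bigl((1-g(t))h_0+g(t)h_1\bigr)$; this is a point-norm continuous family, hence a continuous field morphism, with $\psi_x=\xi$ and $\psi_w=\zeta$. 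Plugging $\psi$ into the infimum yields
\[
d\bigl((\xi,x),(\zeta,w)\bigr)=d_X(x,w)\qquad\text{whenever } x\neq w \text{ and } \xi,\zeta \text{ are unital,}
\]
independently of $\xi$ and $\zeta$. This has two fatal consequences. First, take $\chi=\id_{\mathcal{A}}$, $x=0$, $\xi=\chi_0=\id_{C[0,1]}$, $\zeta\colon f\mapsto f(0)1$, and let $a\in\mathcal{A}$ be the section constantly equal to $\id_{[0,1]}$. Every $d$-ball around $(\xi,0)$ contains $(\zeta,w)$ for all sufficiently small $w>0$, yet $\|\zeta(a(w))-\chi_w(a(w))\|=\|\id_{[0,1]}\|=1$, so $(\zeta,w)\notin V^{\chi}_{(\{a\},\epsilon)}$ once $\epsilon<1$. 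Hence this basic open set (with $V=[0,1]$) contains $(\xi,0)$ but no $d$-ball around it: $\tau_{(\mathcal{A}\rightarrow\mathcal{B})}\not\subseteq\tau_d$, and the metric topology is strictly coarser. Second, $d$ is not even a metric: $d\bigl((\xi,0),(\zeta,0)\bigr)\geq\tfrac12\,d_0(\xi,\zeta)>0$, while $d\bigl((\xi,0),(\xi,\delta)\bigr)+d\bigl((\xi,\delta),(\zeta,0)\bigr)=2\delta\rightarrow 0$, violating the triangle inequality. So your transport step is not merely unproven---it fails, and the obstruction lies in the definition of $d$ in \autoref{prg:maitrise} rather than in your estimates (this also affects the paper's later use of $d$ when invoking Michael's theorem). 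A correct metrization must forbid the comparison morphism from depending on the pair of points, e.g.\ a Kuratowski-type metric $d'\bigl((\xi,x),(\zeta,w)\bigr)=d_X(x,w)+\sum_k 2^{-k}\bigl|\min\bigl(1,d_x(\xi,\chi^{(k)}_x)\bigr)-\min\bigl(1,d_w(\zeta,\chi^{(k)}_w)\bigr)\bigr|$ built from a fixed countable fibrewise-dense family $\{\chi^{(k)}\}\subseteq\Hom_{\mathcal{F}_{\Ca}}(\mathcal{A},\mathcal{B})$; there the transport argument you envisioned does go through, because it is only ever applied to pairs of fixed global morphisms.
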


\begin{proof}
The fact that the metric $d$ is well-defined and induces $\tau_{(\mathcal{A}\rightarrow \mathcal{B})}$ can be checked via standard arguments and left to the reader.
\end{proof}

$\bullet$ \textbf{Application of the selection theorem.} For the subsequent paragraph, we fix the following that we will refer to as \emph{the current setting}.

We let $(X,\{A_x\}_{x}, \mathcal{A})$ and $(X,\{B_x\}_{x}, \mathcal{B})$ be continuous fields of separable $\Ca$-algebras over the same compact Hausdorff metric space $(X,d_X)$ of dimension \emph{at most one}.
We fix metrics $\{d_x\}_x$ and $d$ on $\{\Hom_{\Ca}(A_x,B_x)\}_x$ and $\bigsqcup_x \Hom_{\Ca}(A_x,B_x)$ respectively as in \autoref{prg:maitrise}. (Recall that $d$ induces $\tau_{(\mathcal{A}\rightarrow \mathcal{B})}$ and $d_x$ induces $\tau_{pn}$.)

We let $\alpha\colon \Cu(\mathcal{A})\rightarrow \Cu(\mathcal{B})$ be any $\Cu$-morphism which descends to the fibers and such that $\alpha_x([1_{A_x}])=[1_{B_x}]$. We consider the multi-valued map
\[
\begin{array}{ll}
\Theta_\alpha\colon X\longrightarrow \mathcal{S}_\alpha\\
\hspace{0,9cm}x\longmapsto \{\phi_x\in \Hom_{\Ca}(A_x,B_x)\mid \Cu(\phi_x)=\alpha_x\}
\end{array}
\] 
where $\mathcal{S}_\alpha:=\{\Theta_\alpha(x)\}_{x\in X}$ is seen as a family of subsets of $(\bigsqcup_x \Hom_{\Ca}(A_x,B_x),d)$.

\begin{prop}\label{prop:(i)} Retain the current setting. Then the family $\mathcal{S}_\alpha$ satisfies condition (i) of Michael's selection theorem. (See \autoref{thm:selection}.)
\end{prop}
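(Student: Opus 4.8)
The plan is to verify the three properties bundled into condition (i) of \autoref{thm:selection}: for each $x\in X$ the set $S:=\Theta_\alpha(x)$ must be non-empty, complete, and path-connected. Recall that for the fields we consider the fibres $A_x,B_x$ are unital separable $\AI$-algebras, so \autoref{thm:RR} applies. Non-emptiness is then immediate: since $\alpha_x$ is a $\Cu$-morphism with $\alpha_x([1_{A_x}])=[1_{B_x}]\leq[1_{B_x}]$, the existence part of \autoref{thm:RR} furnishes a *-homomorphism $\phi_x\colon A_x\to B_x$ with $\Cu(\phi_x)=\alpha_x$, so $S\neq\emptyset$.

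For completeness I would first prove that $S$ is closed in $(\Hom_{\Ca}(A_x,B_x),\tau_{pn})$ and then transfer this to the ambient metric $d$. Closedness carries the real content. As $A_x$ is separable the point-norm topology is metrizable, so it suffices to treat sequences: let $(\phi_n)_n\subseteq S$ converge in point-norm to $\phi$, which is again a *-homomorphism as a point-norm limit of *-homomorphisms. Fix a positive $a$ in the stabilisation of $A_x$ and $\epsilon>0$. From $\|\phi_n(a)-\phi(a)\|\to 0$ and the comparison estimate $\|c-d\|<\epsilon\Rightarrow (c-\epsilon)_+\precsim d$, I get for large $n$ both $(\phi(a)-\epsilon)_+\precsim\phi_n(a)$ and $(\phi_n(a)-\epsilon)_+\precsim\phi(a)$. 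The first gives $[(\phi(a)-\epsilon)_+]\leq[\phi_n(a)]=\alpha_x([a])$, and since $[\phi(a)]=\sup_{\epsilon>0}[(\phi(a)-\epsilon)_+]$ we obtain $[\phi(a)]\leq\alpha_x([a])$. For the second, using that $\phi_n$ is a *-homomorphism we have $(\phi_n(a)-\epsilon)_+=\phi_n((a-\epsilon)_+)$, so $\alpha_x([(a-\epsilon)_+])=[(\phi_n(a)-\epsilon)_+]\leq[\phi(a)]$; as $\alpha_x$ preserves the supremum $[a]=\sup_{\epsilon>0}[(a-\epsilon)_+]$, this yields $\alpha_x([a])\leq[\phi(a)]$. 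Hence $\Cu(\phi)=\alpha_x$, i.e. $\phi\in S$. Finally, since $d\geq\tfrac12 d_x$ on the fibre over $x$, a $d$-Cauchy sequence in $S$ is $d_x$-Cauchy, hence converges in $d_x$ to a point of $S$ by completeness of $d_x$ and closedness of $S$; a direct estimate with the defining infimum of $d$ shows this convergence also holds for $d$, so $(S,d)$ is complete.

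Path-connectedness is the step I expect to be the \emph{main obstacle}, and the one requiring external input. Given $\phi,\psi\in S$ we have $\Cu(\phi)=\Cu(\psi)=\alpha_x$, so the uniqueness part of \autoref{thm:RR} shows that $\phi$ and $\psi$ are approximately unitarily equivalent. By itself this only places $\psi$ in the closure of the unitary orbit of $\phi$, not in its path-component; to produce a genuine path I would invoke \cite{A99}, which upgrades approximate unitary equivalence between such morphisms to a point-norm continuous path $t\mapsto\phi_t$ of *-homomorphisms with $\phi_0=\phi$ and $\phi_1=\psi$. Each $\phi_t$ is approximately unitarily equivalent to $\phi$ and so has the same Cuntz invariant, hence lies in $S$; thus $S$ is path-connected. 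Assembling the three parts yields condition (i) of \autoref{thm:selection} for the family $\mathcal{S}_\alpha$.
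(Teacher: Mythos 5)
Your proof is correct and follows essentially the same route as the paper: non-emptiness and closedness of $\Theta_\alpha(x)$ via \autoref{thm:RR} (you additionally spell out the R{\o}rdam-lemma argument showing $\Cu(\cdot)$ is preserved under point-norm limits, which the paper leaves implicit), completeness by comparing the ambient metric $d$ with $d_x$ on the fibre, and path-connectedness from Aristov's result \cite{A99}.
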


\begin{proof}
We deduce from \autoref{thm:RR} that $\Theta_{\alpha}(x)$ is a closed non-empty subset of\break $(\Hom_{\Ca}(A_x,B_x),d_x)$. Since the restriction of $d$ on $\Hom_{\Ca}(A_x,B_x)$ is $d_x$, it means that $\Theta_{\alpha}(x)$ is a non-empty closed subset $ (\bigsqcup_x \Hom_{\Ca}(A_x,B_x),d)$, and hence $(\Theta_{\alpha}(x),d_x)$ is a complete space for any $x\in X$. 
We also know from \cite[Theorem 2.4]{A99}, that $(\Theta_{\alpha}(x),d_x)$ is path-connected for any $x\in X$, which ends the proof.
\end{proof}

We now turn to the uniform path property of $\mathcal{S}_\alpha$ and the lower-semicontinuity of $\Theta_\alpha$. Before diving into the proof, we will need the following results.

\begin{prg}[Admissible sets]\label{prg:uniform} Recall that $\Cu$-morphisms $\alpha,\beta\colon S\rightarrow T$ \emph{compare} on a (finite) set $G\in S$, and we write $\alpha\simeq_{G}\beta$, if $\alpha(g')\leq \beta(g)$ and $\beta(g')\leq \alpha(g)$ for any $g',g\in G$ with $g'\ll g$. (See \cite[Definition 3.9]{C22}.)

 It has been shown in \cite[Theorem 3.3.1]{R12}, that the classification of *-homomorphisms between unital $\AI$-algebras (recalled in \autoref{thm:RR}) is \emph{uniformly continuous}. More specifically, given a unital $\AI$-algebra $A$, a finite set $F\subseteq A$ and some arbitrary precision $\epsilon>0$, there exists a finite set $G\subseteq \Cu(A)$ satisfying the following: 

For any (unital) *-homomorphisms $\phi,\psi\colon A\rightarrow B$ into a unital $\AI$-algebra $B$, there exists  $u\in U(B)$ such that $\phi\simeq_{(F,\epsilon)} u\psi u^*$, whenever $\Cu(\phi)\simeq_G\Cu(\psi)$.

We refer to $G$ as a \emph{$(F,\epsilon)$-admissible set for $A$}.
\end{prg}

\begin{lma}\label{lma:approxCumorph} Retain the current setting. Assume there exists $x\in X$ with $\alpha_x=\beta_x$.

Then for any finite set $G\subseteq \Cu(\mathcal{A})$, there exists an open neighborhood $V$ of $x$ such that $\alpha_v\simeq_{G_v}\beta_v$ for all $v\in V$.
\end{lma}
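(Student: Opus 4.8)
The plan is to fix the finite set $G$ and, by intersecting finitely many neighborhoods, reduce to a single ordered pair $(g',g)\in G\times G$; since the relation $\simeq$ and the hypothesis $\alpha_x=\beta_x$ are symmetric in $\alpha,\beta$, it suffices to produce a neighborhood $V$ of $x$ on which the implication
\[
\Cu(\pi_v^{\mathcal A})(g')\ll \Cu(\pi_v^{\mathcal A})(g)\ \Longrightarrow\ \alpha_v\big(\Cu(\pi_v^{\mathcal A})(g')\big)\le \beta_v\big(\Cu(\pi_v^{\mathcal A})(g)\big)
\]
holds. Fixing positive representatives $a',a\in(\mathcal A\otimes\mathcal K)_+$ with $g'=[a']$, $g=[a]$, the left side reads $[a'_v]\ll[a_v]$ and, using that $\alpha,\beta$ descend to the fibers, the right side rewrites as $\Cu(\pi_v^{\mathcal B})(\alpha(g'))\le \Cu(\pi_v^{\mathcal B})(\beta(g))$, i.e. an inequality between the images in $\Cu(B_v)$ of the two \emph{fixed} global elements $\alpha(g'),\beta(g)\in\Cu(\mathcal B)$. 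At $x$ itself the conclusion is automatic: since $\alpha_x=\beta_x$ and $\Cu$-morphisms are order preserving, $[a'_x]\le[a_x]$ gives $\alpha_x([a'_x])=\beta_x([a'_x])\le\beta_x([a_x])$.

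The engine is a semicontinuity principle that I would isolate first: (SC) \emph{if $b,c\in(\mathcal B\otimes\mathcal K)_+$ satisfy $[b_x]\le[c_x]$, then for every $\eta>0$ there is a neighborhood $W$ of $x$ with $[(b_v-\eta)_+]\le[c_v]$ for all $v\in W$.} This is where conditions (ii) and (iii) of \autoref{dfn:ctsfields} enter: by R\o{}rdam's lemma $(b_x-\eta/2)_+=r_x c_x r_x^{*}$ for some $r_x$; lift $r_x$ through the surjection $\pi_x^{\mathcal B}$ (on a matrix amplification), and then continuity of the norm functions forces $\|(b_v-\eta/2)_+-r_v c_v r_v^{*}\|<\eta/2$ on a neighborhood, whence $[(b_v-\eta)_+]\le[c_v]$ there. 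From (SC) I would extract the working tool: (B) \emph{if $s\ll s'$ in $\Cu(\mathcal B)$ and $\Cu(\pi_x^{\mathcal B})(s')\le \Cu(\pi_x^{\mathcal B})(t)$, then $\Cu(\pi_v^{\mathcal B})(s)\le \Cu(\pi_v^{\mathcal B})(t)$ on a neighborhood of $x$.} Indeed $s\ll s'$ yields a representative $s'=[c']$ and $\eta>0$ with $s\le[(c'-\eta)_+]$ globally, so $\Cu(\pi_v^{\mathcal B})(s)\le[(c'_v-\eta)_+]$; applying (SC) to $c'$ and a representative of $t$ with this same $\eta$ closes the inequality. The key is that the way-below room on the left absorbs the cut-down that (SC) necessarily introduces.

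With (B) in hand, every pair for which $g'\ll g$ \emph{already in $\Cu(\mathcal A)$} is handled. Interpolate $g'\ll g''\ll g$ in $\Cu(\mathcal A)$: then $\alpha(g')\ll\alpha(g'')$ (morphisms preserve $\ll$), while at $x$, using $\alpha_x=\beta_x$ and $g''\le g$, $\Cu(\pi_x^{\mathcal B})(\alpha(g''))=\Cu(\pi_x^{\mathcal B})(\beta(g''))\le \Cu(\pi_x^{\mathcal B})(\beta(g))$. Thus (B) with $s=\alpha(g')$, $s'=\alpha(g'')$, $t=\beta(g)$ gives the inequality on a neighborhood, and the symmetric one follows by exchanging $\alpha,\beta$; intersecting over the finitely many such pairs yields $V$.

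The main obstacle is the passage from way-below in $\Cu(\mathcal A)$ to way-below in the fibre $\Cu(A_v)$: the relation $[a'_v]\ll[a_v]$ can hold for $v$ arbitrarily near $x$ without $[a'_x]\ll[a_x]$, and then the gap $\eta$ with $[a'_v]\le[(a_v-\eta)_+]$ degenerates as $v\to x$. For each fixed $\epsilon>0$ one still has $g_\epsilon:=[(a-\epsilon)_+]\ll g$ globally, so the preceding paragraph gives $\Cu(\pi_v^{\mathcal B})(\alpha(g_\epsilon))\le \Cu(\pi_v^{\mathcal B})(\beta(g))$ on a neighborhood $V_\epsilon$; but $V_\epsilon$ shrinks as $\epsilon\to0$, and no soft argument matches the vanishing gap to a single neighborhood. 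I expect this degenerate case to be the crux, and to require the standing unitality $\alpha_v([1_{A_v}])=\beta_v([1_{A_v}])=[1_{B_v}]$ at \emph{every} fibre together with $\alpha_x=\beta_x$: working by contradiction along a sequence $v_i\to x$ (legitimate since $X$ is metric), one rules out the degenerate configuration. The unitality is genuinely needed here — a non-unital rank-doubling $\beta$ agreeing with $\alpha$ only at $x$ would violate the comparison on an entire punctured neighborhood — so any complete proof must feed it into this last step.
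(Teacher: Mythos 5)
Your core argument is correct and is, in substance, the paper's own proof. The paper reduces to pairs $g',g\in G$ with $g'\ll g$ taken in the \emph{global} semigroup $\Cu(\mathcal{A})$: since $\alpha$ preserves $\ll$ one has $\alpha(g')\ll\alpha(g)$, since $\alpha_x=\beta_x$ one has $\alpha(g)(x)\leq\beta(g)(x)$, and then it cites \cite[Lemma 2.2 (i)]{ABP13} to get $\alpha(g')(v)\leq\beta(g)(v)$ on a neighborhood; symmetry and finiteness of $G$ finish. Your principles (SC) and (B) are precisely a self-contained rederivation of that cited lemma (via R\o{}rdam's lemma, lifting through the surjection $\pi_x^{\mathcal{B}}\otimes\id_{\mathcal{K}}$, and continuity of the norm functions), and your application of (B) to $s=\alpha(g')$, $t=\beta(g)$ is the same deduction; the interpolation $g'\ll g''\ll g$ is harmless but unnecessary, since (B) applies directly with $s'=\alpha(g)$.

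Concerning your final paragraph: the ``degenerate case'' you isolate (pairs that are way-below only fiber-wise, $g'(v)\ll g(v)$ in $\Cu(A_v)$ without $g'\ll g$ in $\Cu(\mathcal{A})$) is not treated by the paper either, and is not needed for how the lemma is used. In the paper's proof, and in its application in \autoref{thm:lsc}, the comparison $\alpha_v\simeq_{G_v}\beta_v$ is construed as being indexed by the pairs $g',g\in G$ with $g'\ll g$ in $\Cu(\mathcal{A})$ and then projected to the fibers; the admissible sets $G_z$ of \autoref{thm:lsc} are themselves images of a single global finite set, so this is the comparison actually fed into the uniform continuity of \autoref{prg:uniform}. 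Under that reading, your first three paragraphs already constitute a complete proof. Under the stricter literal reading of $\simeq_{G_v}$, the paper's one-line proof has exactly the gap you flag, and your proposed fix (a contradiction along a sequence $v_i\to x$, invoking unitality) is speculative and not carried out; note in particular that the scaling hypothesis $\alpha_x([1_{A_x}])=[1_{B_x}]$ plays no role whatsoever in the paper's argument for this lemma, so your claim that it is ``genuinely needed'' does not reflect the intended statement.
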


\begin{proof}
We know that for any $g',g\in G$ such that $g'\ll g$, we have that $\alpha(g)(x)\leq\beta(g)(x)$ and $\alpha(g')\ll \alpha(g)$. By \cite[Lemma 2.2 (i)]{ABP13}, there exists an open neighborhood $V$ of $x$ such that $\alpha(g')(v)\leq \beta(g)(v)$ for all $v\in V$. A symmetric argument together with finiteness of $G$ end the proof.
\end{proof}

By a \emph{trivial field}, we mean a continuous field $(X,\{A_x\}_{x}, \mathcal{A})$ such that $\mathcal{A}$ is isomorphic to $C(X)\otimes A$, for some $\Ca$-algebra $A$. 

By a \emph{locally trivial field}, we mean a continuous field $(X,\{A_x\}_{x}, \mathcal{A})$ satisfying that for any $x\in X$, there exists an open neighborhood $V$ of $x$ such that the (closed two-sided) ideal $\mathcal{I}_U:= C_0(U)\mathcal{A}$ is isomorphic to $C_0(U)\otimes A_x$, for some $\Ca$-algebra $A_x$.

\begin{thm}\label{thm:lsc} Retain the current setting. Assume moreover that $(X,\{A_x\}_{x}, \mathcal{A})$ is a locally trivial field.
\begin{itemize}
\item[(i)] The family $\mathcal{S}_\alpha$ satisfies the uniform path property. See \autoref{thm:selection} (ii).

\item[(ii)] The map $\Theta_\alpha\colon X\longrightarrow \mathcal{S}_\alpha$ is lower-semicontinuous whenever $\Hom_{\mathcal{F}_{\Ca}}(\mathcal{A},\mathcal{B})$ satisfies $\rm (PP)$.
\end{itemize}
\end{thm}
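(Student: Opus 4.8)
The plan is to drive both parts with a single engine: the uniform continuity of Robert's classification recorded in \autoref{prg:uniform}, whose decisive feature is that an $(F,\epsilon)$-admissible set $G$ depends only on the \emph{domain} algebra (and on $F,\epsilon$), never on the target. Since $\mathcal{A}$ is locally trivial and $X$ is compact, I would first cover $X$ by finitely many open sets $V_1,\dots,V_m$ with $C_0(V_j)\mathcal{A}\cong C_0(V_j)\otimes A^{(j)}$, so that over each $V_j$ all domain fibres $A_v$ are canonically identified with one fixed unital $\AI$-algebra $A^{(j)}$. This identification lets me fix, once on each piece, a single finite set $F\subseteq A^{(j)}$ controlling the metric $d_v$ up to a prescribed tail, together with a single admissible set $G\subseteq\Cu(A^{(j)})$, which I then lift to a finite subset of $\Cu(\mathcal{A})$ whose fibrewise images are the $G_v$. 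All the uniformity we need across the \emph{varying} targets $B_x$ is thereby absorbed into this domain-side datum. Throughout, the maps in play are unital because $\alpha_x([1_{A_x}])=[1_{B_x}]$.

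For part (i), fix $\epsilon>0$ and work over one trivial piece. Given $S=\Theta_\alpha(x)$ and $\phi,\psi\in S$ we have $\Cu(\phi)=\Cu(\psi)=\alpha_x$, so trivially $\Cu(\phi)\simeq_G\Cu(\psi)$; hence \autoref{prg:uniform} furnishes $u\in U(B_x)$ with $\phi\simeq_{(F,\epsilon)}u\psi u^*$. Because $B_x$ is a unital $\AI$-algebra we have $\K_1(B_x)=0$ and $U(B_x)=U_0(B_x)$, so $u$ is joined to $1$ by a continuous path $(u_t)$; the path $t\mapsto u_t\psi u_t^*$ then stays inside $S$, since conjugation preserves the Cuntz class, and joins $\psi$ to $u\psi u^*$, which lies $(F,\epsilon)$-close to $\phi$. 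Combined with the genuine path-connectedness of $S$ from \cite[Theorem 2.4]{A99}, this already lets one join $\phi$ to $\psi$ inside $S$; what is missing is the quantitative statement.

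The hard part of (i) is precisely \emph{diameter control}: the conjugation path $u_t\psi u_t^*$ can be large when $u$ is far from $1$, so I must upgrade the previous step to a stable-uniqueness assertion, namely that when $\phi$ and $\psi$ are already $\delta$-close \emph{and} Cuntz-equal the implementing unitary may be chosen near $1$ (equivalently, almost commuting with $\psi$ on $F$), with the nearness tending to $0$ as $\delta\to0$, uniformly over the target $\AI$-algebras. I would obtain this by refining the path-connectedness of the Cuntz fibres of \cite{A99} into a \emph{uniform local} path-connectedness, arguing at finite interval-algebra stages $C[0,1]\otimes M_k$ — where unital homomorphisms are described by explicit eigenvalue data and close Cuntz-equal maps admit explicitly size-bounded homotopies — and then transporting the bound across all fibres by means of the domain-only admissible sets. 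Taking $\delta$ to be the minimum of the thresholds produced on the finitely many pieces $V_j$ yields the required uniform $\delta$; this finite-stage homotopy estimate, made uniform in the target, is the genuine obstacle.

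For part (ii), I would verify lower-semicontinuity on the basic open sets $V^\chi_{(F,\epsilon)}$ generating $\tau_{(\mathcal{A}\rightarrow\mathcal{B})}$. Suppose $\phi_{x_0}\in\Theta_\alpha(x_0)\cap V^\chi_{(F,\epsilon)}$ and set $C:=\max_{a\in F}\|\phi_{x_0}(a(x_0))-\chi(a)(x_0)\|<\epsilon$. First I invoke $\rm(PP)$ to obtain $\chi'\in\Hom_{\mathcal{F}_{\Ca}}(\mathcal{A},\mathcal{B})$ with $\chi'_{x_0}=\phi_{x_0}$; by \autoref{lma:approxextendmorph} the morphisms $\chi'$ and $\chi$ remain $(F,C+\eta)$-close, with $C+\eta<\epsilon$, on a neighborhood $W_1$ of $x_0$. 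Writing $\beta:=\Cu(\chi')$ we have $\beta_{x_0}=\Cu(\phi_{x_0})=\alpha_{x_0}$, so \autoref{lma:approxCumorph} applied to the uniformly chosen admissible set $G$ yields a neighborhood $W_2$ with $\alpha_w\simeq_{G_w}\beta_w$ for all $w\in W_2$. Then for $w\in W_1\cap W_2$ the existence half of \autoref{thm:RR} provides $\phi_w\in\Theta_\alpha(w)$, and \autoref{prg:uniform} provides $u_w\in U(B_w)$ with $\phi_w\simeq_{(F_w,\epsilon'')}u_w\chi'_w u_w^*$. The decisive move is to conjugate back: $\tilde\phi_w:=u_w^*\phi_w u_w$ still satisfies $\Cu(\tilde\phi_w)=\alpha_w$, so $\tilde\phi_w\in\Theta_\alpha(w)$, while $\tilde\phi_w\simeq_{(F_w,\epsilon'')}\chi'_w\simeq_{(F_w,C+\eta)}\chi_w$. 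Choosing $\epsilon''+\eta<\epsilon-C$ gives $\tilde\phi_w\in\Theta_\alpha(w)\cap V^\chi_{(F,\epsilon)}$, so the set $\{w:\Theta_\alpha(w)\cap V^\chi_{(F,\epsilon)}\neq\emptyset\}$ contains the neighborhood $W_1\cap W_2$ of $x_0$ and is therefore open, which is exactly lower-semicontinuity.
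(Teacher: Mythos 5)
Your part (ii) is essentially the paper's own argument, step for step: extend $\phi_{x_0}$ to a global morphism $\chi'$ via $\rm(PP)$, keep $\chi'$ close to $\chi$ on a neighborhood via \autoref{lma:approxextendmorph}, get $\Cu(\chi'_w)\simeq_{G_w}\alpha_w$ on a neighborhood via \autoref{lma:approxCumorph}, use local triviality plus \autoref{prg:uniform} to make the admissible set $G$ fibrewise uniform, use non-emptiness of $\Theta_\alpha(w)$ (Robert's existence) and admissibility to produce an element of $\Theta_\alpha(w)$ close to $\chi'_w$ by conjugation (conjugation does not change the induced $\Cu$-morphism), and finish with the triangle inequality. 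That half is correct and needs no further comment.

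Part (i), however, has a genuine gap, and you have named it yourself. You correctly diagnose that path-connectedness (\cite[Theorem 2.4]{A99}) together with admissible sets only yields qualitative connectivity, and that the crux is a diameter-controlled statement, uniform over the varying target algebras $B_x$. But your proposal then leaves precisely that statement unproven: you propose to "refine" the connectivity of the Cuntz fibres into a uniform local path-connectedness by arguing with eigenvalue data at finite stages $C[0,1]\otimes M_k$ and transporting bounds to inductive limits. That is not a routine refinement; it amounts to re-proving \cite[Theorem 2.3]{A99} (restated in the paper as \autoref{thm:pepite}), which is exactly what the paper invokes to close this gap: for a unital interval algebra $A$, a finite set $F$ containing the canonical generators, and $\epsilon>0$, there is a threshold $\delta(F,\epsilon)>0$ \emph{depending only on the domain} such that any unitary $w$ of any $\AI$-target with $w\phi w^*\simeq_{(F,\delta)}\phi$ can be joined to $1$ through unitaries $\gamma(s)$ satisfying $\gamma(s)\phi\gamma(s)^*\simeq_{(F,\epsilon)}\phi$. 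Local triviality and compactness of $X$ then make this threshold uniform in $x$, which is all the paper needs. Carrying out your finite-stage plan would require approximate factorization of homomorphisms into the limit algebras and homotopy estimates uniform in the target, i.e., the actual content of Aristov's theorem. Moreover, even granting that controlled-path statement, your argument is not finished: a controlled conjugation path joins $\psi$ only to $u\psi u^*$, never to $\phi$ itself, so one must iterate — concatenating conjugation paths over an increasing sequence of finite sets with summable tolerances and verifying that the concatenation converges in $\Theta_\alpha(x)$ to $\phi$ — a step your sketch omits entirely.
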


\begin{proof}
(i) Recall that $A_x$ is interval algebras for any $x\in X$. Combined with the (local) triviality of $(X,\{A_x\}_{x}, \mathcal{A})$, we deduce the uniform path property from \cite[Theorem 2.3]{A99} that we will explicitly recall and use later. (See \autoref{thm:pepite}).

(ii) Let $V_{\epsilon}^\chi$ be a basic open set of $\tau_{(\mathcal{A}\rightarrow \mathcal{B})}$. We aim to show that $\{x\in X\mid \Theta_{\alpha}(x)\cap V_{\epsilon}^\chi\neq \emptyset\}$ is open in $X$. Let $x\in X$ be such that $\Theta_{\alpha}(x)\cap V_{\epsilon}^\chi\neq \emptyset$. It suffices to prove that there exists an open neighborhood $Y$ of $x$ such that $Y\subseteq \{x\in X\mid \Theta_{\alpha}(x)\cap V_{\epsilon}^\chi\neq \emptyset\}$. 

First, observe that $x\in V$ and that there exists $\phi_x\in \Theta_{\alpha}(x)$ such that $d_x(\phi_x,\chi_x)<\epsilon$. Choose $\epsilon'>0$ such that $d_x(\phi_x,\chi_x)<\epsilon'<\epsilon$. By $\rm (PP)$, we can find $\psi\in \Hom_{\mathcal{F}_{\Ca}}(\mathcal{A},\mathcal{B})$ such that $\psi_x=\phi_x$ and \autoref{lma:approxextendmorph} yields an open neighborhood $W$ of $x$ such that $d_w(\psi_w,\chi_w)<\epsilon'$ for all $w\in W$.

Now, fix $\delta>0$ small enough such that $\delta+\epsilon'<\epsilon$ and a finite set $F\subseteq \mathcal{A}$. Let $x\in X$. By the local triviality of $(X,\{A_x\}_{x}, \mathcal{A})$, we can  find an open neighborhood $Z$ of $x$ such that $A_z\simeq A_x$ for all $z\in Z$. Now, we apply the uniform continuity of the classification recalled in \autoref{prg:uniform}, to find a finite set $G\subseteq \Cu(\mathcal{A})$, such that $G_z$ is a $(F_z,\delta)$-admissible set for all $z\in Z$. Since $\Cu(\psi_x)=\alpha_x$, we can apply \autoref{lma:approxCumorph} to find an open neighborhood $\tilde{Z}$ of $x$ such that $\Cu(\psi_z)\simeq_{G_z}\alpha_z$ for all $z\in\tilde{Z}$. Also, recall that $\Theta_\alpha(z)$ is non-empty. Henceforth, for any $z\in Z\cap\tilde{Z}$, we are able to find $\phi_z\in \Theta_\alpha(z)$ such that $d_z(\psi_z,\phi_z)<\delta$. 

Putting everything together, we compute that
\[ d_z(\phi_z,\chi_z)\leq d_z(\phi_z,\psi_z)+ d_z(\psi_z,\chi_z)\leq \delta+\epsilon'<\epsilon
\]
for any $z\in W\cap Z\cap\tilde{Z}$. As a conclusion, for any $y\in Y:=W\cap Z\cap \tilde{Z}\cap V$, we have $\Theta_{\alpha}(y)\cap V_\epsilon^\chi\neq \emptyset$ which ends the proof.
\end{proof}

As a consequence, we can invoke Michael's selection theorem (see \autoref{thm:selection})  to obtain the following existence result.

\begin{cor}\label{cor:existence}
Let $A,B$ be unital interval algebras. For any scaled $\Cu$-morphism\break $\alpha\colon \Cu(C(X)\otimes A)\rightarrow \Cu(C(X)\otimes B)$ descending to the fibers, there exists $\phi\in \Hom_{\mathcal{F}_{\Ca}}(\mathcal{A},\mathcal{B})$ such that $\Cu(\phi)=\alpha$. 

A fortiori, $\Cu(\phi_x)=\alpha_x$ for any $x\in X$.
\end{cor}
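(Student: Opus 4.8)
The plan is to recognize \autoref{cor:existence} as the special case of the machinery developed above in which both fields are \emph{trivial}, namely $\mathcal{A}=C(X)\otimes A$ and $\mathcal{B}=C(X)\otimes B$ with constant fibers $A_x=A$ and $B_x=B$. So I would first check that this configuration sits inside the \emph{current setting}: since $A$ and $B$ are unital interval algebras they are separable, whence $\mathcal{A}$ and $\mathcal{B}$ are continuous fields of separable $\Ca$-algebras over the compact Hausdorff metric space $X$ of dimension at most one. Moreover, as $\alpha$ is scaled and descends to the fibers, evaluating $\alpha([1_\mathcal{A}])=[1_\mathcal{B}]$ at each $x$ through the square of \autoref{dfn:dscCu} gives $\alpha_x([1_{A_x}])=[1_{B_x}]$, exactly as required. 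Being trivial, both fields are in particular locally trivial.

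Next I would verify the projective property $\rm (PP)$ for $\Hom_{\mathcal{F}_{\Ca}}(\mathcal{A},\mathcal{B})$, which is immediate in the trivial case. Given $x\in X$ and any $\phi_x\in\Hom_{\Ca}(A,B)=\Hom_{\Ca}(A_x,B_x)$, the ``constant'' morphism $\chi:=\id_{C(X)}\otimes\phi_x$ is a continuous field morphism by \autoref{prop:eqmorphism}, and its fiber maps all equal $\phi_x$; in particular $\chi_x=\phi_x$.

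With these in hand I would feed everything into Michael's selection theorem. By \autoref{prop:(i)} the family $\mathcal{S}_\alpha$ satisfies condition (i); by local triviality and \autoref{thm:lsc}(i) it satisfies the uniform path property (ii); and by $\rm (PP)$ together with \autoref{thm:lsc}(ii) the map $\Theta_\alpha$ is lower-semicontinuous. Since $X$ is a compact metric space, hence second-countable, and the fibers are separable, the Hom-bundle is metrizable via the metric $d$ of \autoref{prg:maitrise}, so it plays the role of the metric space $Y$. Thus \autoref{thm:selection} yields a $\tau_{(\mathcal{A}\rightarrow\mathcal{B})}$-continuous selection $x\mapsto\phi_x$ with $\phi_x\in\Theta_\alpha(x)$, i.e. $\Cu(\phi_x)=\alpha_x$ for all $x$. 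Invoking the continuity criterion \autoref{prop:cc2} — legitimate because $\rm (PP)$ holds, so (iii)$\Rightarrow$(i) — I conclude that $\phi\colon a\mapsto(x\mapsto\phi_x(a(x)))$ is a well-defined continuous field morphism.

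It remains to upgrade the fiberwise identity $\Cu(\phi_x)=\alpha_x$ to the global identity $\Cu(\phi)=\alpha$, and this is where I expect the real difficulty. For every $s\in\Cu(\mathcal{A})$ and every $x$, the commuting squares of \autoref{dfn:dscCu} give $\Cu(\pi_x^\mathcal{B})(\Cu(\phi)(s))=\alpha_x(\Cu(\pi_x^\mathcal{A})(s))=\Cu(\pi_x^\mathcal{B})(\alpha(s))$, so $\Cu(\phi)(s)$ and $\alpha(s)$ share the same image under every fiber evaluation. To force equality I would establish (or invoke) that the family $\{\Cu(\pi_x^\mathcal{B})\}_x$ jointly separates the elements of $\Cu(\mathcal{B})$, i.e. that the induced map into $\prod_x\Cu(B_x)$ is an order-embedding, for the trivial field $\mathcal{B}=C(X)\otimes B$. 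This joint injectivity at the level of the Cuntz semigroup, rather than the now-routine selection argument, is the crux; it rests on an explicit description of $\Cu(C(X)\otimes B)$ and is precisely the kind of statement that the ``continuous fields of Cuntz semigroups'' formalism is designed to supply. The ``a fortiori'' clause $\Cu(\phi_x)=\alpha_x$ then reads off as the fiberwise shadow of $\Cu(\phi)=\alpha$, although in the construction it is in fact obtained first.
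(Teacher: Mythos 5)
Your proposal is correct and follows essentially the same route as the paper: triviality of $\mathcal{A}$ and $\mathcal{B}$ gives local triviality and $\rm (PP)$, and then \autoref{prop:(i)}, \autoref{thm:lsc} and Michael's selection theorem (\autoref{thm:selection}), combined with the continuity criterion \autoref{prop:cc2}, produce $\phi\in\Hom_{\mathcal{F}_{\Ca}}(\mathcal{A},\mathcal{B})$ with $\Cu(\phi_x)=\alpha_x$ for all $x\in X$. The one place where you go beyond the paper is your final paragraph: the paper's proof stops at the selection argument and treats the passage from the fiberwise identity $\Cu(\phi_x)=\alpha_x$ to the global identity $\Cu(\phi)=\alpha$ as implicit, whereas you correctly isolate it as a separate step, requiring that the fiber evaluations $\{\Cu(\pi_x^{\mathcal{B}})\}_x$ jointly separate elements of $\Cu(C(X)\otimes B)$, i.e.\ that the natural map $\Cu(C(X)\otimes B)\rightarrow \prod_x\Cu(B_x)$ is injective (indeed an order-embedding). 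For trivial fields over a one-dimensional base this does hold --- it follows from the explicit descriptions of Cuntz semigroups of such algebras in \cite{APS11,ABP13}, in the spirit of the sheaf-theoretic picture the paper only develops later --- so your sketch of that step is sound, and flagging it is a genuine gap-fill relative to the paper's two-line proof rather than a deviation from its method.
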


\begin{proof} It is immediate that the triviality of $(X,\{A_x\}_{x}, \mathcal{A})$ and $(X,\{B_x\}_{x}, \mathcal{B})$ implies local triviality of $(X,\{A_x\}_{x}, \mathcal{A})$ and $\rm (PP)$ of $\Hom_{\mathcal{F}_{\Ca}}(\mathcal{A},\mathcal{B})$.  
The result now follows from \autoref{prop:(i)} and \autoref{thm:lsc}.
\end{proof}

We remark that the restriction to trivial continuous fields was imposed solely to obtain $\rm (PP)$. Consequently, establishing $\rm (PP)$ for a class of locally trivial fields would lead to a more general existence result.

\begin{prg}We thank M. D\u{a}d\u{a}rlat for pointing out that the Mayer-Vietoris sequence applied to $\K$-theory could provide an obstruction to $\rm (PP)$ for locally trivial fields over the circle. Let us construct a specific example.

Let $D:=M_{2^\infty}\oplus M_{2^\infty}$ be two copies of the CAR-algebra. Observe that $D$ is a unital $\AF$-algebra and that $\K_0(D)\simeq \Z[\frac{1}{2}]\oplus \Z[\frac{1}{2}]$. Also, by considering any *-isomorphism $\nu\colon M_2(M_{2^\infty})\simeq M_{2^\infty}$ (e.g. given by partial isometries adding to the unit), we construct an automorphism $\varphi\colon D\simeq D$ given by $(\nu\circ (a\mapsto \begin{psmallmatrix}a&0\\0&a\end{psmallmatrix}),\id_D)$. Write $\alpha:=\K_0(\varphi)$ and observe that the matrix representation of $\alpha$ is $\begin{psmallmatrix}2&0\\0&1\end{psmallmatrix}$. Now consider 
\[
\mathcal{A}:=C(\T)\otimes D \quad\quad \text{and}  \quad\quad \mathcal{B}:=\{f\in C([0,1],D)\mid f(1)=\varphi(f(0))\}.
\]
Observe that $(\T,\{D\},\mathcal{A})$ and $(\T,\{D\},\mathcal{B})$ are well-defined locally trivial continuous fields of $\Ca$-algebras with fibers $D$. (In fact, $\mathcal{A}$ is the trivial continuous field.) By the K\"{u}nneth formula applied to $\mathcal{A}$ and the Mayer-Vietoris sequence applied to the pullback $\mathcal{B}$, we obtain that $\K_0(\mathcal{A})\simeq\K_0(D)$ and $ \K_0(\mathcal{B})\simeq \Z[\frac{1}{2}]$. In particular, $\K_0(\mathcal{A})\nsimeq \K_0(\mathcal{B})$. 

Now, let $x\in \T$ and let $\phi_x\colon A_x\rightarrow B_x$ be the identity morphism on $D$. (That is, $\phi_x:=\id_D$.) 
We shall prove that $\phi_x$ cannot be approximated by any continuous field morphism $\chi\colon \mathcal{A}\rightarrow \mathcal{B}$ by contradiction. 

Assume that there exists $\chi\colon \mathcal{A}\rightarrow \mathcal{B}$ such that $\chi_x=\phi_x$. In particular, $\K_0(\chi_x)\colon \K_0(D)\rightarrow \K_0(D)$ is a group isomorphism. Now, observe that for any $y\in \T$, there exists a proper closed subinterval $V$ of $\T$ joining $x$ to $y$. Moreover, the restriction of $\mathcal{B}$ to $V$ is a trivial continuous field over $V$. Therefore $\chi_x\colon D\rightarrow D$ and $\chi_y\colon D\rightarrow D$ are homotopic *-homomorphisms. 

By homotopy invariance of $\K$-theory, we deduce that $\K_0(\chi_y)$ is a group isomorphism for any $y\in \T$. We conclude that for any proper closed subinterval $V$ of $\T$, the induced map $\K_0(\chi_{\mid V})\colon \K_0(\mathcal{A}(V))\rightarrow \K_0(\mathcal{B}(V))$ (where $\mathcal{A}(V)$ and $\mathcal{B}(V)$ denote the quotient $\Ca$-algebras obtained by restriction of the sections to $V$) is also a group isomorphism.

Finally, let $V,W$ be proper closed subintervals of $\T$ such that $V\cup W=\T$. It is well known that $\mathcal{A}\simeq\mathcal{A}(V)\oplus_{\mathcal{A}(V\cap W)} \mathcal{A}(W)$ and $\mathcal{B}\simeq\mathcal{B}(V)\oplus_{\mathcal{B}(V\cap W)} \mathcal{B}(W)$. (See e.g. \cite{AM10} for the sheaves picture of continuous fields.) 
The Mayer-Vietoris sequence yields $\K_0(\mathcal{A})\simeq \K_0(\mathcal{B})$ which leads to a contradiction. As a consequence, $\Hom_{\mathcal{F}_{\Ca}}(\mathcal{A},\mathcal{B})$ does not have $\rm (PP)$.
\end{prg}

\begin{qst}
Let $\mathcal{A},\mathcal{B}$ be continuous fields of interval algebras over the interval. Under which conditions $\Hom_{\mathcal{F}_{\Ca}}(\mathcal{A},\mathcal{B})$ has $\rm (PP)$? 
\end{qst}

\subsection{The uniqueness}
We aim to extend the uniqueness part of \autoref{thm:RR} to morphisms between continuous fields whose fibers fall into the class of separable unital $\AI$-algebras. As for the existence part, restrictions on the base space and the fibers will appear. The technical key is inspired from a \textquoteleft gluing process\textquoteright\ developed in \cite[Lemma 3.4]{DEN11}, used to extend from local-to-global, the classification of $\AF$-algebras by means of the $\K_0$-group to certain continuous fields whose fibers fall into the class of $\AF$-algebras. Nevertheless, their argumentation has to be refined for our context, which generally has $\K_1$-obstructions. Our gluing process is based on the following result.
\begin{thm}\cite[Theorem 2.3]{A99}
\label{thm:pepite}
Let $A$ be a unital interval algebra. Let $F\subseteq A$ be a finite set containing the canonical generators of $A$ and let $\epsilon>0$. Then there exists $\delta(F,\epsilon) > 0$ satisfying the following. 

For any $*$-homomorphism $\phi: A \to B$, where $B$ is any $\AI$-algebra, and any unitary $w\in B$ such that $w\phi w^*\simeq_{(F,\delta)}\phi$, there exists a path $\gamma$ in $U(B)$ joining $w$ to $1$ such that
\[
\gamma(s)\phi \gamma(s)^*\simeq_{(F,\epsilon)}\phi
\]
for any $0\leq s\leq 1$.
\end{thm}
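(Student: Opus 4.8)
The plan is to reduce the statement, through a sequence of standard ``straightening'' perturbations, to a single one-dimensional spectral homotopy problem, and then to extract the constant $\delta(F,\epsilon)$ by tracking the accumulated errors. Two structural facts about $\AI$-algebras will be used throughout: every $\AI$-algebra $B$ has stable rank one and satisfies $\K_1(B)=0$ (both inherited from $M_n(C[0,1])$, since $[0,1]$ is contractible and one-dimensional), so $U(B)=U_0(B)$ and \emph{every} unitary is already joinable to $1$; the whole content of the theorem is therefore the \emph{control} of such a path. I also record the elementary reduction that $F$ may be replaced by the canonical generators of $A$: since each $a\in F$ is, up to arbitrarily small norm, a fixed $*$-polynomial in the generators, there is a modulus depending only on $F$ and $A$ converting ``$\gamma(s)$ almost commutes with the generators of $\phi(A)$'' into ``$\gamma(s)\phi\gamma(s)^*\simeq_{(F,\epsilon)}\phi$'', uniformly in $\phi$ and $B$. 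This is exactly where the hypothesis $F\supseteq$ generators enters, and it reduces everything to controlling commutation with the images of the matrix units and of the coordinate functions.

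First I would decompose $A=\bigoplus_i M_{n_i}(C[0,1])$ and straighten the matrix units. Writing $E_{jk}=\phi(e_{jk})$ and $p=\sum_j E_{jj}=\phi(1_A)$, the hypothesis gives $\|[w,E_{jk}]\|<\delta$, so the averaged element $\tilde w:=\sum_k E_{k1}(E_{11}wE_{11})E_{1k}$ commutes with the matrix units and lies within $O(\delta)$ of $w$; taking its unitary part and joining $w$ to it by the short canonical exponential path produces a prefix of $\gamma$ of length $O(\delta)$ along which commutation with $\phi$ is disturbed by only $O(\delta)$. After this the unitary commutes with $M_{n_i}$, hence decomposes as $1_{n_i}\otimes v$ inside $pBp\cong M_{n_i}\otimes(M_{n_i}'\cap pBp)$, and the only surviving hypothesis is $\|[v,H_0]\|=O(\delta)$, where $H_0$ is the positive contraction with $\phi(t\otimes 1)=1_{n_i}\otimes H_0$ and $\spectrum(H_0)\subseteq[0,1]$. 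Thus the theorem reduces to the core statement: \emph{for every $\epsilon'>0$ there is $\delta'>0$ such that any unitary $v$ almost commuting (within $\delta'$) with a positive contraction $H_0$ of spectrum in $[0,1]$, inside the relevant subalgebra of an $\AI$-algebra, can be joined to $1$ through unitaries commuting with $H_0$ up to $\epsilon'$.}

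The core step is a one-dimensional spectral homotopy and is the technical heart. I would exploit $\spectrum(H_0)\subseteq[0,1]$ together with the estimate $\|[v,g(H_0)]\|\le \mathrm{Lip}(g)\,\|[v,H_0]\|$: relative to a fine partition of $[0,1]$ and a subordinate family of Lipschitz bump functions, $v$ approximately preserves the ``soft spectral bands'' of $H_0$, and one attempts to factor $v$, up to small error, into unitaries each localised on a narrow band and then contract each factor to $1$ within an enlargement of its band. A unitary localised on a band of width $\eta$ commutes with $H_0$ up to $O(\eta)$, so concatenating such local contractions yields a path commuting with $H_0$ up to $O(\eta)$, and one sets $\eta\sim\epsilon'$. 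That each local factor can be contracted to $1$ with control rests on the one-dimensionality of the parameter: over the contractible interval the index/Bott-type obstruction to perturbing the almost-commuting pair $(v,H_0)$ to a commuting pair vanishes, and stable rank one of $B$ then supplies the contracting paths. Over $\T$ this obstruction is precisely a winding number and the statement fails --- the local shadow of the Mayer--Vietoris obstruction discussed above.

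The main obstacle is the quantitative core step, and specifically the soft spectral cutting. Because $\spectrum(H_0)$ may fill $[0,1]$ with no gaps, honest spectral projections are unavailable: cutting near a partition point carrying spectral mass produces a large commutator, and sharpening the bump functions (Lipschitz constant $\sim 1/\eta$) amplifies the initial commutator $\delta'$ by a factor $1/\eta$. The delicate balancing is therefore $\delta'\ll\eta\ll\epsilon'$, together with controlling the accumulation over the $\sim 1/\eta$ concatenated local paths and checking that the localisation factorisation $v\approx\prod_k v_k$ holds with an error the later path-control can absorb. Making all of these estimates --- and the vanishing of the connectivity obstruction --- \emph{uniform} in $\phi$, $w$ and $B$, so that a single $\delta(F,\epsilon)$ serves every $*$-homomorphism and every $\AI$-algebra target, is the crux; by contrast the two straightening perturbations and the generators-to-$F$ conversion are routine.
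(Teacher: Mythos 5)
Your proposal and the paper part ways at the outset: the paper does not reprove this statement at all --- its proof is a one-line deferral to Aristov's Theorem 2.3 in \cite{A99}, observing that the argument there goes through once one notes that $F$ is finite and contains the canonical generators. You instead attempt a proof from scratch, and the attempt has a genuine gap exactly where the theorem lives. Your preliminary reductions (replacing $F$ by the generators, straightening the matrix units so that the problem becomes a single unitary $v$ almost commuting with a positive contraction $H_0$ in a corner of $B$) are standard and essentially correct, modulo minor repairs: your averaged element $\tilde w$ is $O(\delta)$-close to $pwp$ with $p=\phi(1_A)$, not to $w$, so the corner piece $(1-p)w(1-p)$ must be treated separately (harmless, since it commutes exactly with $\phi(A)$, but it still has to be connected to $1-p$). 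These reductions, however, only restate the theorem in minimal form. The \emph{core step} --- that a unitary almost commuting with $H_0$ can be joined to $1$ through unitaries almost commuting with $H_0$, with $\delta'$ depending only on $\epsilon'$ and uniformly over all $\phi$, $w$ and all $\AI$-algebra targets $B$ --- \emph{is} Aristov's theorem, and your text does not prove it: it names a strategy (soft spectral bands, a factorization $v\approx\prod_k v_k$, local contractions) and then lists the unresolved points (the $\delta'\ll\eta\ll\epsilon'$ balancing, the error accumulation over $\sim 1/\eta$ concatenated paths, the uniformity) as ``the crux''. A proof that ends by identifying the crux as an outstanding task is a sketch, not a proof.

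Moreover, the sketch has a concrete flaw at its key claim. Contracting a band-localized factor $v_k$ to $1$ ``within an enlargement of its band'' means finding a path in the unitary group of the unitization of the hereditary subalgebra $D_k\subseteq B$ cut out by a soft spectral element of $H_0$. Stable rank one and $\K_1(B)=0$ do not supply such a path: hereditary subalgebras of $\AI$-algebras can have non-vanishing $\K_1$ (already $C_0((0,1))\subseteq C[0,1]$ has $\K_1\cong\Z$; its unitization is $C(\T)$, whose unitary group is not connected), so a band-localized unitary can carry a non-trivial \emph{local} index even though its class dies in $B$. The actual content of results of this type is precisely the mechanism --- Berg-type cancellation of these local classes across adjacent bands, or a choice of the factorization making them vanish, carried out with norm control --- that converts the global vanishing $\K_1(B)=0$ into locally controlled paths. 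Your appeal to ``the index/Bott-type obstruction vanishes over the contractible interval, and stable rank one supplies the contracting paths'' asserts this mechanism rather than proving it (and, applied factor by factor, it is false as stated). So the proposal is not a correct alternative proof: to make it one, you would have to carry out the band-cancellation scheme quantitatively, or simply do what the paper does and invoke \cite{A99} directly.
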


\begin{proof}
This is essentially the same proof, combined with the fact that $F$ is finite and contains the generators of $A$.
\end{proof}

$\bullet$ \textbf{The interval case.} The gluing procedure is somehow technical and uses properties about the topological base space at hand. For the purpose of this paper, we focus on continuous fields over the closed interval $[0,1]$. For instance, it allows the following lemma which will appear useful to realize our gluing procedure.

\begin{lma}\label{lma:fiberwise}
Let $([0,1],\{A\}_{x}, \mathcal{A})$ be a continuous field of $\Ca$-algebras. Let $a,b\in \mathcal{A}$ be such that $a(y)=b(y)$ for some $y\in [0,1]$. 

Then the element $c\colon x\mapsto \biggl\{\begin{array}{ll} a(x) \text{ if }x \leq y\\ b(x) \text{ if }x \geq y\end{array}$ is well-defined and belongs to $\mathcal{A}$.
\end{lma}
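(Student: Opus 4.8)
The plan is to first dispatch well-definedness and then verify membership in $\mathcal{A}$ via the continuity criterion (ii) of \autoref{prop:cc}. Well-definedness is immediate: the two branches of $c$ overlap only at the point $x=y$, where they agree by hypothesis since $a(y)=b(y)$. For membership, I would fix an arbitrary $x\in[0,1]$ and $\epsilon>0$ and seek an open neighborhood $V$ of $x$ together with some $d\in\mathcal{A}$ such that $c(v)\in V_\epsilon^d$ for all $v\in V$, i.e. $\|c(v)-d(v)\|<\epsilon$ on $V$. Once this is established at every point, \autoref{prop:cc} delivers $c\in\mathcal{A}$.

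Away from the junction point the verification is trivial. If $x<y$, I would choose $V$ to be an open neighborhood of $x$ contained in $[0,y)$; there $c$ coincides identically with $a$, so taking $d:=a$ gives $\|c(v)-d(v)\|=0<\epsilon$ throughout $V$. The case $x>y$ is symmetric, using $d:=b$ and a neighborhood contained in $(y,1]$.

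The only real content, and the step I expect to be the main obstacle, is the junction point $x=y$, where $c$ switches from $a$ to $b$. Here I would again take $d:=a$. For $v\leq y$ we have $c(v)=a(v)$, so the difference vanishes; for $v\geq y$ we have $c(v)=b(v)$, so $\|c(v)-d(v)\|=\|b(v)-a(v)\|=\nu_{b-a}(v)$. The key is condition (iii) of \autoref{dfn:ctsfields}: the norm function $\nu_{b-a}$ is continuous on $[0,1]$, and it vanishes at $y$ precisely because $a(y)=b(y)$. Continuity then supplies a neighborhood $V$ of $y$ on which $\nu_{b-a}<\epsilon$, which bounds $\|c(v)-d(v)\|$ by $\epsilon$ on the whole of $V$ in both branches. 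Thus the continuity criterion holds at $x=y$ as well, completing the verification and yielding $c\in\mathcal{A}$.
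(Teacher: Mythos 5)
Your proof is correct and follows exactly the route the paper intends: the paper's own proof is the one-line remark that the lemma ``readily follows from the continuity criterion'' of \autoref{prop:cc}, and your argument is precisely the detailed verification of criterion (ii) at every point, with the only nontrivial case being the junction $x=y$, handled via continuity of $\nu_{b-a}$ and $\nu_{b-a}(y)=0$.
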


\begin{proof}
This readily follows from the continuity criterion. See \autoref{prop:cc}. 
\end{proof}

\begin{thm}[The gluing procedure]
\label{thm:gluing}
Let $([0,1],\{A_x\}_{x}, \mathcal{A})$ and $([0,1],\{B_x\}_{x}, \mathcal{B})$ be continuous fields of separable unital $\AI$-algebras. Assume moreover that the fibers of  $\mathcal{A}$ are interval algebras. Let $Y=[y_0,y_1]$ be a fixed closed interval. For any finite set $F\subseteq \mathcal{A}$ and any $\epsilon>0$, there exists $\delta > 0$ satisfying the following. 

For any continuous field morphisms $\phi, \psi: \mathcal{A} \to \mathcal{B}$ and any  $w_0,w_1\in U(\mathcal{B})$ such that 
\[
(\ast) \quad\quad\quad(w_0\phi w_0^*)_y\simeq_{(F_y,\delta)} \psi_y \simeq_{(F_y,\delta)} (w_1\phi w_1^*)_y, \quad \forall y\in Y,
\]
then there exists $w \in U(\mathcal{B})$ with $w(y_0)=w_0(y_0)$ and $w(y_1)=w_1(y_1)$,  such that
\[
(w\phi w^*)_y\simeq_{(F_y,\epsilon)} \psi_y, \quad \forall y\in Y.
\]
\end{thm}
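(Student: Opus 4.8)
The plan is to reduce the statement to the construction of a single interpolating unitary, and then to build that unitary by spreading, across the interval $Y$, the controlled null-homotopies furnished by \autoref{thm:pepite}.

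First I would pass to the unitary $u:=w_0^*w_1\in U(\mathcal{B})$. Conjugating the two halves of $(\ast)$ and using that conjugation by a unitary is isometric, the hypothesis yields $(w_0\phi w_0^*)_y\simeq_{(F_y,2\delta)}(w_1\phi w_1^*)_y$, hence $u_y\phi_y u_y^*\simeq_{(F_y,2\delta)}\phi_y$ for every $y\in Y$. The point of introducing $u$ is that the sought $w$ can be written as $w=w_0v$ on $Y$, where $v\in U(\mathcal{B})$ is a unitary with $v(y_0)=1$ and $v(y_1)=u(y_1)$: indeed $w(y_0)=w_0(y_0)$ and $w(y_1)=w_0(y_1)u(y_1)=w_1(y_1)$ automatically, and since conjugation is isometric, the target estimate $(w\phi w^*)_y\simeq_{(F_y,\epsilon)}\psi_y$ follows from $v_y\phi_y v_y^*\simeq_{(F_y,\epsilon')}\phi_y$ with $\epsilon'+\delta\leq\epsilon$, combined with the $\delta$-half of $(\ast)$ for $w_0$. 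Outside $Y$ one sets $w=w_0$ on $[0,y_0]$ and $w=w_1$ on $[y_1,1]$; these agree with $w_0v$ at the two endpoints, so \autoref{lma:fiberwise} glues them into a genuine element of $U(\mathcal{B})$. Thus the whole statement reduces to producing the section $v$ over $Y$.

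To build $v$, I would first enlarge $F\subseteq\mathcal{A}$ by adjoining sections whose values $F_y$ contain the canonical generators of the interval algebra $A_y$ for every $y$ (this is possible because the isomorphism type of an interval algebra is rigid, hence locally constant along the field). Then \autoref{thm:pepite} supplies $\delta(F,\epsilon')>0$, and choosing $\delta$ so that $2\delta\leq\delta(F,\epsilon')$ (uniformly in $y$, again by local constancy of the fibers) guarantees, for each $y\in Y$, a path $\gamma^y$ in $U(B_y)$ with $\gamma^y(0)=u(y)$, $\gamma^y(1)=1$, and $\gamma^y(s)\phi_y\gamma^y(s)^*\simeq_{(F_y,\epsilon')}\phi_y$ for all $s$. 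Letting $t\colon Y\to[0,1]$ be the affine homeomorphism with $t(y_0)=1$ and $t(y_1)=0$, the natural candidate is
\[
v(y):=\gamma^{y}\bigl(t(y)\bigr),
\]
for which the endpoint conditions $v(y_0)=\gamma^{y_0}(1)=1$ and $v(y_1)=\gamma^{y_1}(0)=u(y_1)$, and the pointwise estimate $v_y\phi_y v_y^*\simeq_{(F_y,\epsilon')}\phi_y$, are all immediate.

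The main obstacle is precisely that this candidate must be a genuine $\tau_{\mathcal{B}}$-continuous section: the paths $\gamma^y$ live in distinct fibers $B_y$, and \autoref{thm:pepite} produces each of them independently, with no a priori continuous dependence on $y$. Resolving this is where the one-dimensionality of the base is essential, and I would handle it in the spirit of the existence part: either by selecting the homotopies continuously through a Michael-type argument over the one-dimensional $Y$, or, more hands-on, by fixing a fine partition $y_0=t_0<\dots<t_N=y_1$, choosing waypoint unitaries at the $t_k$, building on each small $[t_{k-1},t_k]$ a centralizing section-piece joining the chosen waypoints (feasible because $u$ is itself a continuous section that is nearly constant on a small piece, providing a reference to perturb off, with nearby target values), and finally gluing consecutive pieces at the $t_k$ via \autoref{lma:fiberwise}, which is legitimate precisely because the pieces are arranged to agree there; the continuity estimates required to run the partition argument are of the same flavour as \autoref{lma:approxextendmorph}. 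Once $v$ is known to be a section, the reduction of the first paragraph finishes the proof.
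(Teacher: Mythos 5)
Your first paragraph is a sound reduction, and you correctly isolate the crux of the theorem: the pointwise candidate $v(y)=\gamma^{y}(t(y))$ need not be a section of $\mathcal{B}$, because the homotopies $\gamma^y$ supplied by \autoref{thm:pepite} are chosen fiber by fiber with no continuity in $y$. But that crux is precisely what your proposal leaves unresolved, and neither of your two suggested repairs can be run as stated. A Michael-type selection argument is not available here: the gluing theorem assumes neither local triviality nor $\rm (PP)$ (unlike the existence part of the paper), and verifying lower semicontinuity and the uniform path property for the sets of centralizing homotopies would require exactly the uniform fiberwise control that is missing. The partition-with-waypoints alternative restates the same difficulty on smaller intervals: producing a centralizing path-section joining two prescribed unitaries in nearby fibers \emph{is} the problem, and nothing in the sketch explains how to do it even on one small piece. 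There is also a secondary gap: your uniform choice of $\delta$ rests on the claim that the fibers are \textquoteleft locally constant\textquoteright, which is unjustified (continuous fields with interval-algebra fibers need not be locally trivial), and in any case Aristov's constant $\delta(F,\epsilon)$ depends on the finite set $F_y\subseteq A_y$, which varies with $y$ even for a trivial field.

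The paper's proof avoids, rather than solves, the continuity problem, and this is the idea your proposal is missing. It applies \autoref{thm:pepite} at the \emph{single} fiber $y_0$, obtaining one null-homotopy $\gamma_0$ of $u(y_0)$ (with $u:=w_1^*w_0$) satisfying $\gamma_0(s)\phi_{y_0}\gamma_0(s)^*\simeq_{(F_{y_0},\epsilon/3)}\phi_{y_0}$; it then lifts $\gamma_0$ to a path $\gamma$ in $U_0(\mathcal{B})$ joining $u$ to $1_{\mathcal{B}}$ with $\gamma_{y_0}=\gamma_0$, using the fact that unitaries in $U_0$ lift along the surjective unital *-homomorphism $\mathcal{B}\twoheadrightarrow B_{y_0}$ (\cite[Corollary 4.3.3]{WO93}); finally, by norm-continuity of the finitely many sections involved, the centralizing estimate persists (with constant $2\epsilon/3$) on a small subinterval $[y_0,y_\epsilon]$, and the whole transition from $u$ to $1$ is compressed into that subinterval via $\widetilde{u}(x):=\gamma_x(h(x))$, glued by \autoref{lma:fiberwise}. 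The resulting unitary $w:=w_1\widetilde{u}$ equals $w_1$ on $[y_\epsilon,y_1]$, so on that portion the required estimate comes directly from the hypothesis $(\ast)$ for $w_1$; no homotopy at any fiber other than $y_0$ is ever needed. By contrast, spreading the homotopy affinely over all of $Y$, as you do, forces you to control every fiber simultaneously — the step your proposal flags but cannot carry out.
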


\begin{proof}
We first consider the unitary element $u := w_1^* w_0$ of $\mathcal{B}$. Combining the comparisons of $(\ast)$, we deduce that $(u\phi u^*)_y\simeq_{(F_y,2\delta)} \phi_y $ for all $y\in Y$, and a fortiori at the fiber $y_0$. Now choose $\delta>0$ small enough such that $2\delta< \min(\delta(F_{y_0},\epsilon/3),\epsilon/3)$ where $\delta(F_{y_0},\epsilon/3)$ is obtained from \autoref{thm:pepite}. 
As a consequence, there exists a path $\gamma_0$ in $U(B_{y_0})$ joining $u(y_0)$ to $1_{B_{y_0}}$ such that
\[
\gamma_0(s)\phi_{y_0} \gamma_0(s)^*\simeq_{(F_{y_0},\epsilon/3)}\phi_{y_0}
\]
for any $0\leq s\leq 1$.

Since $\mathcal{B} \twoheadrightarrow B_{y_0}$ is a surjective unital *-homomorphism, it is known that elements of $U_0(B_{y_0})$ lift to elements of $U_0(\mathcal{B})$. (See e.g. \cite[Corollary 4.3.3]{WO93}.) Therefore, there exists a path $\gamma$ in $U_0(\mathcal{B})$ joining $u$ to $1_\mathcal{B}$ whose projection to the fiber $B_{y_0}$ satisfies that $\gamma_{y_0}=\gamma_0$.
By the continuity of the norm and the finiteness of $F$, we can find some $y_{\epsilon}\in Y$ such that 
\[
\gamma_y(s)\phi_{y} \gamma_y(s)^*\simeq_{(F_{y},2\epsilon/3)}\phi_{y} 
\]
for any $0\leq s\leq 1$ and any  $y\in [y_0,y_\epsilon]$.

Next we use \autoref{lma:fiberwise} to fiber-wisely define the following unitary element of $\mathcal{B}$
\[
\begin{array}{ll}
\widetilde{u}\colon [0,1] \longrightarrow \bigsqcup_xB_x\\
\hspace{1cm}x\longmapsto\Biggl\{\begin{array}{ll}
u(x) & \text{ if}\quad 0 \leq x \leq y_0 \\
\gamma_x(h(x)) &\text{ if}\quad y_0 < x \leq y_\epsilon \\
1_{B_y} &\text{ if}\quad y_\epsilon < x \leq 1
\end{array}
\end{array}
\]
where $h$ is the linear parametrisation satisfying $h(y_0)=0$ and $h(y_\epsilon)=1$.

Finally, we define the unitary element $w:=w_1\widetilde{u}$ of $\mathcal{B}$ that will complete our proof. We check that 
$w(y_0)=w_1(y_0)\widetilde{u}(y_0)=w_1(y_0)u(y_0)=w_0(y_0)$
and that $w(y_1)=w_1(y_1)\widetilde{u}(y_1)=w_1(y_1)$.
Moreover, for all $ y \in [y_0, y_1] $ and for all $ f \in F$, we compute that
\begin{align*}
\| (w \phi(f) w^*)(y) - \psi(f)(y)) \| &\leq \|  (w_1(\widetilde{u}\phi(f)\widetilde{u}^*-\phi(f))w_1^*)(y) \|
+ \|  (w_1 \phi(f) w_1^* - \psi(f))(y) \| \\
&\leq 2 \epsilon/3 + \delta\leq \epsilon
\end{align*}
which ends the proof.
\end{proof}

\begin{thm}\label{thm:uniqueness}
Let $([0,1],\{A_x\}_{x}, \mathcal{A})$ and $([0,1],\{B_x\}_{x}, \mathcal{B})$ be continuous fields of separable unital $\AI$-algebras.  Assume moreover that the fibers of  $\mathcal{A}$ are interval algebras. 

Let $\phi, \psi: \mathcal{A} \to \mathcal{B}$ be continuous field morphisms. For any finite set $F\subseteq \mathcal{A}$ and any $\epsilon>0$, there exists a finite set $G\subseteq \Cu(\mathcal{A})$ satisfying the following. 

If $\Cu(\phi_x)\simeq_{G_x}\Cu(\psi_x)$ for any $x\in [0,1]$, then there exists a unitary element $w\in U(\mathcal{B})$ such that
$w\phi w^*\simeq_{(F,\epsilon)} \psi$.
\end{thm}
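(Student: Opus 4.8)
The plan is to run a \emph{local-to-global} argument whose engine is the gluing procedure of \autoref{thm:gluing}: at each point of $[0,1]$ the hypothesis will produce a unitary implementing the approximate equivalence on that fiber, and the gluing procedure will stitch finitely many of these local unitaries into a single global $w\in U(\mathcal{B})$. Throughout I would first enlarge $F$ so that the restrictions $F_x$ contain the canonical generators of the interval algebra $A_x$ (legitimate, since enlarging $F$ only strengthens the conclusion), so that \autoref{thm:pepite} and \autoref{prg:uniform} apply fiberwise.

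\textbf{Choice of $G$.} I would fix the precision first. Given $(F,\epsilon)$, the gluing procedure \autoref{thm:gluing} associates a tolerance $\delta>0$, while the uniform continuity of Robert's classification recalled in \autoref{prg:uniform} associates to $(F_x,\delta/2)$ an admissible set in $\Cu(A_x)$ at each fiber. Lifting such a fiber-admissible set to $\Cu(\mathcal{A})$ through $\Cu(\pi_x^{\mathcal{A}})$, invoking the persistence of admissibility on a neighborhood of $x$ exactly as in the proof of \autoref{thm:lsc}, and running a compactness argument on $[0,1]$, I would obtain a \emph{single} finite set $G\subseteq\Cu(\mathcal{A})$ whose restriction $G_x$ is an $(F_x,\delta/2)$-admissible set for $A_x$ \emph{simultaneously at every} $x\in[0,1]$. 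This $G$, which depends only on $\phi,\psi,F,\epsilon$, is the output of the theorem.

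\textbf{Local unitaries, cover and gluing.} Assume now $\Cu(\phi_x)\simeq_{G_x}\Cu(\psi_x)$ for all $x$. Since $G_x$ is admissible, \autoref{prg:uniform} furnishes, for each $x$, a unitary $u_x\in U(B_x)$ with $\phi_x\simeq_{(F_x,\delta/2)}u_x\psi_x u_x^*$. As the fibers $B_x$ are unital $\AI$-algebras, they have vanishing $\K_1$ and stable rank one, so $u_x\in U_0(B_x)$ and hence lifts to a global $\widehat u_x\in U_0(\mathcal{B})$ along the surjection $\pi_x^{\mathcal{B}}$ (as in \autoref{thm:gluing}, via \cite[Corollary 4.3.3]{WO93}). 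Setting $w_x:=\widehat u_x^{\,*}$ gives $(w_x\phi w_x^*)_x\simeq_{(F_x,\delta/2)}\psi_x$, and since $w_x\phi w_x^*,\psi\in\mathcal{B}$, the continuity of the fiber norms (\autoref{lma:approxextendmorph}) yields an open neighborhood $V_x$ on which $(w_x\phi w_x^*)_y\simeq_{(F_y,\delta)}\psi_y$. By compactness I would extract a finite subcover, arranged as overlapping closed subintervals $I_1,\dots,I_m$ ordered left to right with consecutive ones meeting in a nondegenerate interval, each carrying a global $w_k$ that is $\delta$-good on $I_k$. On each overlap $O_k=I_k\cap I_{k+1}$ both $w_k$ and $w_{k+1}$ are $\delta$-good, so \autoref{thm:gluing} (target $\epsilon$) produces $g_k\in U(\mathcal{B})$ agreeing with $w_k$ at the left endpoint of $O_k$ and with $w_{k+1}$ at the right endpoint and satisfying $(g_k\phi g_k^*)_y\simeq_{(F_y,\epsilon)}\psi_y$ on $O_k$. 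I then define $w$ piecewise, equal to $w_k$ on $I_k$ outside the overlaps and to $g_k$ on $O_k$; the boundary matching and repeated use of \autoref{lma:fiberwise} make $w$ a well-defined unitary section. On every fiber $w$ coincides with some $w_k$ (tolerance $\delta\le\epsilon$) or some $g_k$ (tolerance $\epsilon$), so running the whole argument with target $\epsilon/2$ and taking the supremum over $y$ gives $w\phi w^*\simeq_{(F,\epsilon)}\psi$.

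\textbf{Main obstacle.} The hard part is entirely in the \textbf{Choice of $G$}: producing one finite $G\subseteq\Cu(\mathcal{A})$ whose fiber restrictions are admissible \emph{uniformly} in $x$, together with a gluing tolerance $\delta$ valid \emph{uniformly} over all subintervals appearing later. Both reduce to showing that the fiberwise data of \autoref{prg:uniform} and \autoref{thm:pepite} vary regularly in $x$—that the relevant tolerances have a positive infimum over $[0,1]$ and that admissibility persists on neighborhoods—which is where the one-dimensionality of the base and the local structure of continuous fields of interval algebras over $[0,1]$ (the local triviality exploited in \autoref{thm:lsc}) must be used. By contrast, the apparent circularity between the covering and the precision is harmless, since $G$ and $\delta$ are fixed before the hypothesis is invoked while the covering $\{I_k\}$ is extracted only afterwards.
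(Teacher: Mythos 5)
Your skeleton --- fiberwise unitaries from admissibility, lifting along $\pi_x^{\mathcal{B}}$, propagation to a neighborhood, a chain cover of $[0,1]$, gluing on overlaps via \autoref{thm:gluing}, and piecewise assembly via \autoref{lma:fiberwise} --- is exactly the paper's. The genuine gap sits in your \emph{Choice of $G$}, which is also where you diverge from the paper. You require a single finite $G\subseteq\Cu(\mathcal{A})$ whose restriction $G_x$ is $(F_x,\delta/2)$-admissible \emph{simultaneously at every} $x\in[0,1]$, and you propose to obtain it by ``persistence of admissibility on a neighborhood of $x$ exactly as in the proof of \autoref{thm:lsc}'' plus compactness. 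But that persistence step in \autoref{thm:lsc} rests explicitly on \emph{local triviality} of the field (one needs $A_z\simeq A_x$ on a neighborhood $Z$ in order to transport an admissible set), and local triviality is \emph{not} a hypothesis of \autoref{thm:uniqueness}: the theorem only assumes the fibers of $\mathcal{A}$ are interval algebras, and such fields over $[0,1]$ need not be locally trivial (the fibers need not even be mutually isomorphic). The same objection applies to your requirement of a gluing tolerance $\delta$ with a positive infimum over all subintervals, and to your preliminary enlargement of $F$ so that $F_x$ contains the canonical generators of $A_x$ for \emph{all} $x$ at once: a single finite set of global sections cannot pass through the generators of every fiber unless the field is essentially trivial. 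So the step you yourself single out as the ``main obstacle'' is not merely hard; the tool you point to for resolving it is unavailable under the stated hypotheses, and no substitute is offered.

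The paper's proof shows that this uniformity is not needed, and avoiding it is the one idea your proposal is missing: the paper never propagates \emph{admissible sets} along the base, it propagates \emph{unitaries}. At a point $x$ it takes an admissible set for the single fiber $A_x$ (this needs only that $A_x$ is an interval algebra and that $\Cu(\pi_x^{\mathcal{A}})$ is surjective), produces a fiber unitary, lifts it to a global $w\in U_0(\mathcal{B})$, and then applies \autoref{lma:approxextendmorph} to the \emph{global} sections $w\phi(a)w^*$ and $\psi(a)$, $a\in F$: continuity of the fiber norms, valid in any continuous field, yields the neighborhood $V$ on which $(w\phi w^*)_v\simeq_{(F_v,\delta)}\psi_v$. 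Compactness then gives finitely many points $x_1,\dots,x_l$ with a chain cover, $G:=\bigcup_i G^{(i)}$ is the union of the finitely many sets chosen at those points, and the hypothesis $\Cu(\phi_x)\simeq_{G_x}\Cu(\psi_x)$ is exploited \emph{only at the points} $x_i$ (comparison on $G_{x_i}$ implies comparison on the subset $G^{(i)}_{x_i}$). Likewise, the pepite constants from \autoref{thm:pepite} and the generator-enlargement of $F$ are needed only at the finitely many overlap intervals $Y_i$ fixed after the cover, not uniformly over $[0,1]$. If you restructure your Choice of $G$ accordingly --- admissibility at finitely many points, neighborhoods coming from norm continuity of global sections rather than from persistence of admissibility --- the remainder of your argument, which already matches the paper's, goes through without any local-triviality or uniform-infimum input.
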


\begin{proof}
Let $x\in X$ and let $\delta>0$. By the uniform continuity of the classification of \autoref{thm:RR}, we can find a subset $H\subseteq\Cu(\mathcal{A})$ such that $H_x$ is $(F_x, \delta)$-admissible for $A_x$. (See \autoref{prg:uniform}.) Using lifting of unitaries arguments similarly as in the proof of \autoref{thm:gluing}, there exists a unitary element $w\in U_0(\mathcal{B})$ such that $(w\phi w^*)_x\simeq_{(F_x,\delta)}\psi_x$, whenever $\Cu(\phi_x)\simeq_{H_x}\Cu(\psi_x)$. Further, by \autoref{lma:approxextendmorph}, we can find an open  an open neighborhood $V$ of $x$ such that $ (w\phi w^*)_v\simeq_{(F_v,\delta)} \psi_v$ for any $ v\in V$, whenever $\Cu(\phi_x)\simeq_{H_x}\Cu(\psi_x)$. 

Subsequently, it follows from the compactness and the dimension of $[0,1]$ that there exist finitely many points $x_1,\dots,x_l$ together with open neighborhoods $V_1,\dots,V_l$, unitary elements $w_1,\dots,w_l$ of $\mathcal{B}$ and finite sets $G^{(1)},\dots,G^{(l)}$ of $\Cu(\mathcal{A})$, satisfying the followings.

$\bullet$ $\bigcup_1^l V_i$ is a cover of $[0,1]$ such that $V_i\cap V_j=\emptyset $ whenever $|i-j|>1$.

$\bullet$ For any $1\leq i\leq l$ and any $v\in V_i$, we have that
$(w_i\phi w_i^*)_v\simeq_{(F_v,\delta)} \psi_v $, whenever $\Cu(\phi_{x_i})\simeq_{G^{(i)}_{x_i}}\Cu(\psi_{x_i})$.

Set $G:=\bigcup_1^l G^{(i)}$ and assume that $\Cu(\phi_x)\simeq_{G_x}\Cu(\psi_x) $ for all $x$. In particular, for any $1\leq i\leq l$,	 we have $\Cu(\phi_{x_i})\simeq_{G^{(i)}_{x_i}}\Cu(\psi_{x_i}) $ and hence, $(w_i\phi w_i^*)_v\simeq_{(F_v,\delta)} \psi_v $ any $v\in V_i$.

Let us recursively modify the unitary elements $\{\omega_i\}_1^l$ in order to construct the global unitary we are looking for. For any $1\leq i\leq l-1$, we fix a subinterval $Y_i:=[y_{i,0},y_{i,1}]\subsetneq V_i\cap V_{i+1}$ and $\delta>0$ small enough (in particular, we choose $\delta<\epsilon$) in order to be able to invoke \autoref{thm:gluing} and obtain a unitary element $\widetilde{w}_i$ of $\mathcal{B}$ satisfying the following
\[
\biggl\{\begin{array}{ll}
(\widetilde{w}_i\phi\widetilde{w}_i^*)_y\simeq_{(F_y,\epsilon)}\psi_y,\quad \forall y\in Y_i\\
\widetilde{w}_i(y_{i,0})=w_i(y_{i,0}) \text{ and } \widetilde{w}_i(y_{i,1})=w_{i+1}(y_{i,1})
\end{array}
\]
We finally use Lemma \ref{lma:fiberwise} to fiber-wisely define the following unitary element of $\mathcal{B}$
\[
\begin{array}{ll}
w\colon [0,1] \longrightarrow \bigsqcup_x B_x\\
\hspace{1cm}x\longmapsto\biggl\{\begin{array}{ll}
w_i(x) & \text{ if}\quad x\in V_i\setminus Y_i \\
\widetilde{w}_i(x) &\text{ if}\quad x\in Y_i
\end{array}
\end{array}
\]
 from which we conclude that 
$(w\phi w^*)_x\simeq_{(F_x,\epsilon)} \psi_x$ for any $x\in \bigcup_1^l V_i=[0,1].$
\end{proof}

\begin{cor}\label{cor:uniqueness}
Let $([0,1],\{A_x\}_{x}, \mathcal{A})$ and $([0,1],\{B_x\}_{x}, \mathcal{B})$ be continuous fields of separable unital $\AI$-algebras.  Assume moreover that the fibers of  $\mathcal{A}$ are interval algebras. Let $\phi, \psi: \mathcal{A} \to \mathcal{B}$ be continuous field morphisms such that $\Cu(\phi_x)=\Cu(\psi_x)$ for all $x\in [0,1]$. 

Then, $\phi$ and $\psi$ are approximately unitarily equivalent as *-homomorphisms. That is, there exists a sequence $(w_n)_n$ in $U(\mathcal{B})$ such that
\[
w_n\phi w_n^*(a)\underset{n\rightarrow\infty}{\longrightarrow} \psi(a), \quad \forall a\in \mathcal{A}
\]
\end{cor}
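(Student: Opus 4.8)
The plan is to deduce Corollary \ref{cor:uniqueness} from Theorem \ref{thm:uniqueness} by a standard $\epsilon$-net exhaustion argument. The key point is that \autoref{thm:uniqueness} already produces, for each prescribed tolerance $(F,\epsilon)$, a \emph{finite} test set $G\subseteq \Cu(\mathcal{A})$ such that agreement of the Cuntz invariants on $G$ (fiberwise) forces the existence of a unitary $w\in U(\mathcal{B})$ conjugating $\phi$ to within $(F,\epsilon)$ of $\psi$. Under the hypothesis of the Corollary we have the \emph{exact} equality $\Cu(\phi_x)=\Cu(\psi_x)$ for all $x\in[0,1]$, which trivially implies $\Cu(\phi_x)\simeq_{G_x}\Cu(\psi_x)$ for \emph{any} finite set $G$ whatsoever. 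So the hypothesis of \autoref{thm:uniqueness} is satisfied for every choice of $(F,\epsilon)$, and we obtain a unitary for each.

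The steps I would carry out are as follows. First, since $\mathcal{A}$ is separable, fix a countable dense subset $\{a_1,a_2,\dots\}\subseteq \mathcal{A}$ and set $F_n:=\{a_1,\dots,a_n\}$. For each $n$, apply \autoref{thm:uniqueness} with the finite set $F_n$ and tolerance $\epsilon=1/n$: this yields a finite set $G^{(n)}\subseteq\Cu(\mathcal{A})$ with the stated property. Because $\Cu(\phi_x)=\Cu(\psi_x)$ for all $x$, the comparison $\Cu(\phi_x)\simeq_{G^{(n)}_x}\Cu(\psi_x)$ holds for all $x\in[0,1]$, so the theorem delivers a unitary $w_n\in U(\mathcal{B})$ with $w_n\phi w_n^*\simeq_{(F_n,1/n)}\psi$, i.e. $\|w_n\phi(a)w_n^*-\psi(a)\|<1/n$ for all $a\in F_n$.

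Next I would verify point-norm convergence on all of $\mathcal{A}$. Fix $a\in\mathcal{A}$ and $\eta>0$. By density choose $a_k$ with $\|a-a_k\|<\eta/3$, and take $N\geq k$ large enough that $1/N<\eta/3$. For every $n\geq N$ we have $a_k\in F_n$ and hence $\|w_n\phi(a_k)w_n^*-\psi(a_k)\|<1/n<\eta/3$. Using that conjugation by a unitary and the $*$-homomorphisms $\phi,\psi$ are contractive, we estimate
\[
\|w_n\phi(a)w_n^*-\psi(a)\|\leq \|\phi(a)-\phi(a_k)\|+\|w_n\phi(a_k)w_n^*-\psi(a_k)\|+\|\psi(a_k)-\psi(a)\|<\eta,
\]
which gives $w_n\phi(a)w_n^*\to\psi(a)$ for every $a\in\mathcal{A}$, as required.

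Honestly, there is no serious obstacle here: all the real work—the gluing over $[0,1]$ that turns local, fiberwise unitary equivalences into a single global unitary—has already been absorbed into \autoref{thm:gluing} and \autoref{thm:uniqueness}. The only mild subtlety worth flagging is that \autoref{thm:uniqueness} requires the hypothesis $\Cu(\phi_x)\simeq_{G_x}\Cu(\psi_x)$ rather than genuine equality, so I would make explicit the trivial implication that exact fiberwise equality of $\Cu(\phi_x)$ and $\Cu(\psi_x)$ subsumes the $\simeq_{G_x}$ comparison for every finite $G$; this is where the strengthened hypothesis of the Corollary is used. The passage from the finite sets $F_n$ to arbitrary elements of $\mathcal{A}$ is the standard separability/$\epsilon/3$ argument recorded above.
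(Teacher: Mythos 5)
Your proposal is correct and is essentially the paper's own argument: the paper simply declares the corollary an immediate consequence of \autoref{thm:uniqueness}, and the details it leaves implicit are exactly the ones you supply (exact fiberwise equality of $\Cu(\phi_x)$ and $\Cu(\psi_x)$ gives the comparison $\simeq_{G_x}$ for every finite $G$, then a countable dense subset of $\mathcal{A}$ and the standard $\epsilon/3$ estimate produce the sequence $(w_n)_n$).
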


\begin{proof}
This is an immediate consequence of \autoref{thm:uniqueness}.
\end{proof}

\begin{qst}
Could we mimic the above gluing procedure for continuous fields over the circle?
\end{qst}

A positive answer would most likely yield a uniqueness result for continuous fields of separable unital $\AI$-algebras over any one-dimensional compact CW-complexes, which may be generalized to any one-dimensional compact Hausdorff metric space.

%
%
%
%
%

\subsection{The classification} Gathering all of the above, we are able to establish classification results regarding continuous fields over the interval and their morphisms. The machinery will be analogous to the one used for *-homomorphisms and $\Ca$-algebras. Therefore, we explicitly introduce notions about continuous fields morphisms alike to the one for *-homomorphisms. 

Firstly, we say that two continuous field morphisms $\phi,\psi\colon (X,\{A_x\}_{x}, \mathcal{A})\rightarrow (X,\{B_x\}_{x}, \mathcal{B})$ are \emph{approximately unitarily equivalent}, if there exists a sequence $(w_n)_n$ of unitaries of $\mathcal{B}$ such that $w_n\phi w_n^*\underset{n\rightarrow\infty}{\longrightarrow} \psi$ in the point-norm topology. Equivalently, if they are approximately unitarily equivalent as *-homomorphisms.

Secondly, we say that a pair $[(X,\{A_x\}_{x}, \mathcal{A}), (X,\{B_x\}_{x}, \mathcal{B})]$ of continuous fields satisfies the \emph{classification property}, termed (CP) for short, if for any scaled $\Cu$-morphism $\alpha\colon \Cu(\mathcal{A})\rightarrow \Cu(\mathcal{B})$ descending to the fibers, there exists a continuous field morphism $\phi\colon \mathcal{A}\rightarrow \mathcal{B}$, unique up to approximate unitary equivalence, such that $\Cu(\phi)=\alpha$. We may write \textquoteleft$[\mathcal{A},\mathcal{B}]$ has (CP)\textquoteright\ when the context is clear.

Lastly, the following property will allow us to use a similar machinery as in the category of $\Ca$-algebras.

\begin{prop}
Let  $(X,\{(A_i)_x\}_x,\mathcal{A}_i)_i$ be a family of continuous fields indexed over an upward-directed set. Consider any inductive system $((X,\{(A_i)_x\}_x,\mathcal{A}_i),\phi_{i,j})_i$ in $\mathcal{F}_{\Ca}$. We have the following.

(i) $\mathcal{F}_{\Ca}\text{-}\lim (\mathcal{A}_i,\phi_{i,j})\simeq\Ca\text{-}\lim (\mathcal{A}_i,\phi_{i,j})$.

(ii) $[\mathcal{F}_{\Ca}\text{-}\lim (\mathcal{A}_i,\phi_{i,j}),\mathcal{B}]$  has (CP) whenever $[\mathcal{A}_i, \mathcal{B}]$ has (CP) for all $i\in I$.
\end{prop}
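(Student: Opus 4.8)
The plan is to establish the two statements separately, since (i) concerns the underlying object while (ii) concerns the classification property. For part (i), I would argue that taking inductive limits in the category $\mathcal{F}_{\Ca}$ of continuous fields agrees with taking inductive limits in the category of $\Ca$-algebras. The key observation is that the forgetful functor sending a continuous field $(X,\{A_x\}_x,\mathcal{A})$ to its section algebra $\mathcal{A}$ is faithful and that continuous field morphisms are in particular $\Ca$-algebra morphisms (indeed $C(X)$-algebra morphisms) by \autoref{dfn:dsc}. First I would form the $\Ca$-algebraic inductive limit $\mathcal{A}_\infty := \Ca\text{-}\lim(\mathcal{A}_i,\phi_{i,j})$ with its canonical maps $\phi_{i,\infty}\colon \mathcal{A}_i\to \mathcal{A}_\infty$. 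The point is then to endow $\mathcal{A}_\infty$ with the structure of a continuous field over $X$, namely to produce fibers $(A_\infty)_x$ and surjections $\pi_x\colon \mathcal{A}_\infty\to (A_\infty)_x$. The natural candidate is $(A_\infty)_x := \Ca\text{-}\lim((A_i)_x,(\phi_{i,j})_x)$, using that each $\phi_{i,j}$ descends to the fibers and hence yields a compatible inductive system on each fiber. One checks that $C(X)$-multiplication passes to the limit (condition (iv) of \autoref{dfn:ctsfields}), that the fiber maps remain surjective (condition (ii), a standard density-plus-completeness argument), and that the norm functions remain continuous (condition (iii)); the latter follows because the norm on $\mathcal{A}_\infty$ of an element coming from some $\mathcal{A}_i$ is a decreasing limit hence lower semicontinuous, combined with the approximation by elements with continuous norm. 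Finally I would verify the universal property: any cocompatible cone of continuous field morphisms $(\mathcal{A}_i\to\mathcal{B})$ factors uniquely through $\mathcal{A}_\infty$ as a $\Ca$-morphism by the $\Ca$-algebraic universal property, and this factoring map automatically descends to the fibers because it does so on the dense image of each $\phi_{i,\infty}$.

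For part (ii), assume $[\mathcal{A}_i,\mathcal{B}]$ has (CP) for every $i\in I$ and let $\mathcal{A}_\infty = \mathcal{F}_{\Ca}\text{-}\lim(\mathcal{A}_i,\phi_{i,j})$. Given a scaled $\Cu$-morphism $\alpha\colon \Cu(\mathcal{A}_\infty)\to\Cu(\mathcal{B})$ descending to the fibers, I would run the standard Elliott intertwining/approximate-factorization argument, now adapted to continuous fields. The existence of $\phi\colon \mathcal{A}_\infty\to\mathcal{B}$ lifting $\alpha$ follows by precomposing: for each $i$, the composite $\alpha\circ\Cu(\phi_{i,\infty})\colon \Cu(\mathcal{A}_i)\to\Cu(\mathcal{B})$ is a scaled $\Cu$-morphism descending to the fibers (descent is preserved because $\phi_{i,\infty}$ descends to the fibers and by the commuting square in \autoref{dfn:dscCu}), so by (CP) for $[\mathcal{A}_i,\mathcal{B}]$ it lifts to a continuous field morphism $\phi_i\colon \mathcal{A}_i\to\mathcal{B}$ with $\Cu(\phi_i)=\alpha\circ\Cu(\phi_{i,\infty})$. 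The maps $\phi_i$ are compatible only up to approximate unitary equivalence, so I would use the one-sided approximate intertwining along the directed system—invoking the uniqueness clause of (CP) to correct each $\phi_i$ by a unitary so that the diagram $\phi_{i+1}\circ\phi_{i,i+1}\approx \phi_i$ commutes up to shrinking tolerances on increasing finite sets—and pass to the limit to obtain $\phi\colon \mathcal{A}_\infty\to\mathcal{B}$ with $\Cu(\phi)=\alpha$ by continuity of the functor $\Cu$ on inductive limits.

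Uniqueness of $\phi$ up to approximate unitary equivalence is the content of the uniqueness half of (CP): if $\phi,\psi\colon \mathcal{A}_\infty\to\mathcal{B}$ both satisfy $\Cu(\phi)=\Cu(\psi)=\alpha$, then for each $i$ the restrictions $\phi\circ\phi_{i,\infty}$ and $\psi\circ\phi_{i,\infty}$ induce the same $\Cu$-morphism out of $\mathcal{A}_i$, hence are approximately unitarily equivalent by the uniqueness in (CP) for $[\mathcal{A}_i,\mathcal{B}]$; a diagonal/approximate-uniqueness argument across the directed system then yields a single approximating sequence of unitaries witnessing $\phi\sim\psi$ on all of $\mathcal{A}_\infty$, using that $\bigcup_i \phi_{i,\infty}(\mathcal{A}_i)$ is dense. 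I expect the main obstacle to be the bookkeeping in this intertwining: one must arrange the unitary corrections and the finite approximating sets coherently so that the approximate commutativity is uniform enough to pass to the limit while simultaneously preserving the descent-to-fibers condition at each stage. Here the equivalence of approximate unitary equivalence as continuous field morphisms and as $\Ca$-homomorphisms (recorded just before this proposition) is what allows the purely $\Ca$-algebraic Elliott intertwining machinery to be imported wholesale, so the genuinely new input is only checking that each corrected morphism still descends to the fibers—which follows because conjugation by a unitary of $\mathcal{B}$ preserves the descent property and the limit of fiber-descending morphisms descends to the fibers by part (i).
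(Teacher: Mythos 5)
Your route is the same as the paper's in both parts: for (i) you form the $\Ca$-limit, equip it with the fibre-wise $\Ca$-limits as fibres, obtain the fibre projections from the universal property, and then check the axioms of \autoref{dfn:ctsfields} and the universal property in $\mathcal{F}_{\Ca}$; for (ii) you spell out the one-sided approximate intertwining (existence from (CP) for each $[\mathcal{A}_i,\mathcal{B}]$, unitary corrections from its uniqueness clause, a diagonal argument over the dense union) that the paper imports wholesale by citing \cite[Proposition 5.2 (iv)]{CES11}. Your part (ii) is fine, including the two genuinely field-theoretic checks that conjugation by a unitary of $\mathcal{B}$ and point-norm limits both preserve descent to the fibres.

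The gap is in your verification of axiom (iii) of \autoref{dfn:ctsfields} in part (i), and it sits at the one point of that part carrying real content. For $a_i\in\mathcal{A}_i$, the norm function of $\phi_{i,\infty}(a_i)$ in the limit field is
\[
x\longmapsto\bigl\|(\phi_{i,\infty})_x\bigl(a_i(x)\bigr)\bigr\|=\inf_{j\geq i}\bigl\|\phi_{i,j}(a_i)(x)\bigr\|,
\]
a decreasing pointwise limit of continuous functions. Such an infimum is \emph{upper} semicontinuous, not lower semicontinuous as you claim; the half of continuity that comes for free is exactly the wrong one. Your follow-up step, ``approximation by elements with continuous norm,'' is then circular: the only dense subset of $\mathcal{A}_\infty$ at your disposal is $\bigcup_i\phi_{i,\infty}(\mathcal{A}_i)$, whose norm functions are precisely the infima above, and a uniform limit of upper semicontinuous functions is again only upper semicontinuous. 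Moreover the failure is real, not a bookkeeping issue: take $X=[0,1]$, let every $\mathcal{A}_i$ be the trivial field $C([0,1])\otimes C([0,1])$, and define unital connecting field morphisms fibre-wise by $(\phi_{i,i+1})_x(f)(t)=f(xt)$. For $a=1\otimes\id_{[0,1]}$ one computes that $\phi_{i,\infty}(a)$ has norm $1$ in the $\Ca$-limit, while the displayed norm function equals $0$ for $x<1$ and $1$ at $x=1$ --- upper semicontinuous but not continuous. So lower semicontinuity of the fibre norms in the limit cannot be obtained by these soft arguments; it needs either additional hypotheses or a genuine proof. (The paper's own proof passes over this point with an ``a fortiori,'' so your attempt to justify it is the right instinct, but the justification given does not close the hole.)
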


\begin{proof}
(i) Consider the inductive system in the category $\Ca$ and denote by $(\mathcal{A},\phi_{i,\infty})$ its $\Ca$-limit. For any $x\in X$, consider the inductive system obtained by the projection of the objects and morphisms at the fiber $x$. We obtain another inductive system in $\Ca$ and we denote its limit by $(A_x,(\phi_{i,\infty})_x)$. A standard one-sided intertwining argument shows that there exists a unique *-homomorphism $\pi_x\colon \mathcal{A}\rightarrow A_x$ compatible with the squares of the intertwining. A fortiori, $(X,\{A_x\}_x,\mathcal{A})$ is a well-defined continuous field over $X$ and $\phi_{i,\infty}$ is a continuous field morphism for any $i\in I$. It is now readily checked that $(\mathcal{A},\phi_{i,\infty})$ satisfies the universal property of inductive limits in the category $\mathcal{F}_{\Ca}$.

(ii) This is done similarly as in the proof of \cite[Proposition 5.2 (iv)]{CES11}.
\end{proof}

In what follows, $\mathcal{C}$ denotes the class of continuous fields that are realized as $\underset{\rightarrow}{\lim}(C[0,1]\otimes A_i,\phi_{i,i+1})_{i\in\N}$, where $A_i$ are unital interval algebras and $\phi_{i,i+1}$ are unital continuous fields morphisms.

We emphasize that $\mathcal{C}$-objects do not reduce to trivial fields, i.e. of the form $C[0,1]\otimes A$, where $A$ is an $\AI$-algebra. 
Similarly, $\mathcal{C}$-morphisms do not reduce to trivial morphisms, i.e. of the form $\id_{[0,1]}\otimes\, \phi\colon C[0,1]\otimes A\rightarrow C[0,1]\otimes B$, where $\phi\colon A\rightarrow B$ is a *-homomorphism between two interval algebras $A,B$. (See e.g. \cite{DP89}.)

\begin{thm}\label{thm:classification}
Let $\mathcal{A},\mathcal{B}\in \mathcal{C}$. Let $\alpha\colon \Cu(\mathcal{A})\rightarrow \Cu(\mathcal{B})$ be any scaled $\Cu$-morphism descending to the fibers.

Then there exists a continuous field morphism $\phi\colon \mathcal{A}\rightarrow\mathcal{B}$, unique up to approximate unitary equivalence, such that $\Cu(\phi)=\alpha$. 
A fortiori, $\Cu(\phi_x)=\alpha_x$ for any $x\in [0,1]$.
\end{thm}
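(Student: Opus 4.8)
The plan is to prove that the pair $[\mathcal{A},\mathcal{B}]$ enjoys the classification property (CP), since (CP) is exactly the conjunction of the existence and the uniqueness-up-to-approximate-unitary-equivalence asserted by the theorem. Writing $\mathcal{A}=\varinjlim(\mathcal{A}_i,\phi_{i,i+1})$ with $\mathcal{A}_i=C[0,1]\otimes A_i$ and $A_i$ unital interval algebras, I would first invoke the preceding Proposition: part (i) identifies the $\mathcal{F}_{\Ca}$-limit with the $\Ca$-limit (so that the ambient limit object is unambiguous), and part (ii) reduces (CP) for $[\mathcal{A},\mathcal{B}]$ to (CP) for each building-block pair $[\mathcal{A}_i,\mathcal{B}]=[C[0,1]\otimes A_i,\mathcal{B}]$. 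Once every such building-block pair is shown to have (CP), the theorem -- both the existence of $\phi$ and its uniqueness -- follows, and the \emph{a fortiori} clause is immediate: from $\Cu(\phi)=\alpha$ and the fact that $\phi$ and $\alpha$ both descend to the fibers we read off $\Cu(\phi_x)=\Cu(\phi)_x=\alpha_x$ for every $x\in[0,1]$.

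The uniqueness half of (CP) for $[C[0,1]\otimes A_i,\mathcal{B}]$ is handled directly by Corollary~\ref{cor:uniqueness}. Indeed, the source $C[0,1]\otimes A_i$ is a continuous field over $[0,1]$ whose fibers are the interval algebras $A_i$, and the target $\mathcal{B}$ is a continuous field of separable unital $\AI$-algebras, so the hypotheses of Corollary~\ref{cor:uniqueness} are met. If $\phi,\psi\colon C[0,1]\otimes A_i\to\mathcal{B}$ are two lifts with $\Cu(\phi)=\Cu(\psi)$ descending to the fibers, then $\Cu(\phi_x)=\Cu(\psi_x)$ for all $x$, and the corollary produces a sequence of unitaries of $\mathcal{B}$ implementing the approximate unitary equivalence.

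The core of the argument is the existence half of (CP) for $[C[0,1]\otimes A_i,\mathcal{B}]$, which I would obtain by a one-sided approximate intertwining on the target side. Writing $\mathcal{B}=\varinjlim(C[0,1]\otimes B_j,\psi_{j,j+1})$ and using continuity of the Cuntz semigroup functor, $\Cu(\mathcal{B})\cong\varinjlim_j\Cu(C[0,1]\otimes B_j)$. As $C[0,1]\otimes A_i$ is separable, $\Cu(C[0,1]\otimes A_i)$ is countably generated; hence, given a finite set and a tolerance, one can approximately factor the scaled fiber-descending $\Cu$-morphism $\alpha$ through a finite stage $\Cu(C[0,1]\otimes B_j)$ by a scaled $\Cu$-morphism that again descends to the fibers. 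Corollary~\ref{cor:existence} (the trivial-to-trivial existence statement) lifts this finite-stage morphism to a continuous field morphism $C[0,1]\otimes A_i\to C[0,1]\otimes B_j$, which I compose with the canonical $\psi_{j,\infty}$ into $\mathcal{B}$. Running this construction along an exhausting sequence of finite sets and tolerances, and using the uniqueness of the previous paragraph to correct successive lifts by unitaries so that they approximately intertwine the connecting maps, yields a point-norm convergent sequence whose limit $\phi\colon C[0,1]\otimes A_i\to\mathcal{B}$ satisfies $\Cu(\phi)=\alpha$.

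The step I expect to be the main obstacle is precisely this target-side factorization: one must pass $\alpha$ through a finite stage by a morphism that remains \emph{both} scaled \emph{and} fiber-descending, since only then does Corollary~\ref{cor:existence} apply, and one must then arrange the building-block lifts to be sufficiently coherent -- via Corollary~\ref{cor:uniqueness} -- to assemble into a single continuous field morphism in the limit. The delicate bookkeeping is to choose finite sets and admissible tolerances that are compatible simultaneously with the continuous-field structure over $[0,1]$ and with the descent to the fibers; all the analytic and order-theoretic inputs (continuity of $\Cu$, countable generation of $\Cu(C[0,1]\otimes A_i)$, and Corollaries~\ref{cor:existence} and~\ref{cor:uniqueness}) are already available.
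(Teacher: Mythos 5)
Your overall skeleton coincides with the paper's: the paper proves the theorem in one line by combining \autoref{cor:existence}, \autoref{cor:uniqueness}, and the (CP)-permanence under inductive limits from part (ii) of the unlabeled Proposition preceding the statement, which is exactly the reduction you set up; your treatment of uniqueness for the building-block pairs and of the \emph{a fortiori} clause also matches the paper's intent.

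The divergence is your third paragraph, and that is where a genuine gap sits. You correctly observe that \autoref{cor:existence} yields existence only when \emph{both} fields are trivial (its proof derives (PP) and local triviality from triviality of source \emph{and} target), so the pairs $[C[0,1]\otimes A_i,\mathcal{B}]$ with $\mathcal{B}$ a non-trivial limit are not directly covered; the paper compresses this point into the invocation of (CP), delegating the limit argument to a CES11-style intertwining. But your proposed bridge does not close the hole as written. First, a $\Cu$-morphism $\Cu(C[0,1]\otimes A_i)\rightarrow\Cu(\mathcal{B})\simeq\varinjlim_j \Cu(C[0,1]\otimes B_j)$ cannot in general be factored exactly through a finite stage, and nothing you cite produces even \emph{approximate} factors that are again genuine $\Cu$-morphisms which are \emph{scaled and descend to the fibers}: fiber-descent is a compatibility with every quotient map $\Cu(\pi_x)$, and neither continuity of $\Cu$ nor countable generation of $\Cu(C[0,1]\otimes A_i)$ gives it. Since \autoref{cor:existence} applies \emph{only} to scaled fiber-descending morphisms, this is precisely the hypothesis you must manufacture; you name it as "the main obstacle" and then assert that all inputs are available, which is not the case. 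Second, your correction step quotes \autoref{cor:uniqueness}, which requires \emph{exact} agreement $\Cu(\phi_x)=\Cu(\psi_x)$ on every fiber; successive lifts built from approximate factorizations will never satisfy this, so the intertwining must instead run on the finite-comparison version, \autoref{thm:uniqueness}, together with the admissible sets of \autoref{prg:uniform}, and with the $\epsilon$-$\delta$ bookkeeping needed both to make the sequence of corrected lifts point-norm Cauchy and to verify $\Cu(\phi)=\alpha$ (not merely approximate agreement) in the limit. In short: same skeleton as the paper, a correct diagnosis of the sensitive point, but the argument you supply at that point would not go through as stated.
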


\begin{proof}
Combine \autoref{cor:existence} with \autoref{cor:uniqueness} and the fact that $[\mathcal{A},\mathcal{B}]$ has (CP).
\end{proof}

\begin{cor}
Let $\mathcal{A},\mathcal{B}\in \mathcal{C}$. Then any $\Cu$-isomorphism $\alpha\colon\Cu(A)\simeq \Cu(B)$ descending to the fibers and such that $\alpha([1_A])=[1_B]$, lifts to a unique (up to approximate unitary equivalence) continuous field isomorphism $\mathcal{A}\simeq \mathcal{B}$.
\end{cor}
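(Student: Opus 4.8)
The plan is to deduce the corollary directly from \autoref{thm:classification}, using the standard two-sided intertwining argument that converts classification of morphisms into classification of objects. First I would apply the existence-and-uniqueness statement of \autoref{thm:classification} to the given scaled $\Cu$-isomorphism $\alpha$ and to its inverse $\alpha^{-1}$. Before doing so, I must verify that $\alpha^{-1}$ is itself a scaled $\Cu$-morphism descending to the fibers: it is automatically a $\Cu$-morphism (the inverse of a $\Cu$-isomorphism is a $\Cu$-morphism), it is scaled because $\alpha([1_\mathcal{A}])=[1_\mathcal{B}]$ forces $\alpha^{-1}([1_\mathcal{B}])=[1_\mathcal{A}]$, and it descends to the fibers because inverting the commuting square in \autoref{dfn:dscCu} fiberwise shows that the induced maps $(\alpha^{-1})_x=(\alpha_x)^{-1}$ fit into the required commuting diagram (here one uses that each $\alpha_x$ is itself an isomorphism on $\Cu(A_x)$, which follows from $\alpha$ being an isomorphism descending to the fibers). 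Applying \autoref{thm:classification} to $\alpha$ yields a continuous field morphism $\phi\colon\mathcal{A}\rightarrow\mathcal{B}$ with $\Cu(\phi)=\alpha$, and applying it to $\alpha^{-1}$ yields $\psi\colon\mathcal{B}\rightarrow\mathcal{A}$ with $\Cu(\psi)=\alpha^{-1}$, each unique up to approximate unitary equivalence.

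Next I would consider the composites $\psi\circ\phi\colon\mathcal{A}\rightarrow\mathcal{A}$ and $\phi\circ\psi\colon\mathcal{B}\rightarrow\mathcal{B}$. By functoriality of $\Cu$ we compute $\Cu(\psi\circ\phi)=\Cu(\psi)\circ\Cu(\phi)=\alpha^{-1}\circ\alpha=\id_{\Cu(\mathcal{A})}=\Cu(\id_\mathcal{A})$, and symmetrically $\Cu(\phi\circ\psi)=\Cu(\id_\mathcal{B})$. Since $\mathcal{A},\mathcal{B}\in\mathcal{C}$, the pairs $[\mathcal{A},\mathcal{A}]$ and $[\mathcal{B},\mathcal{B}]$ also fall under the scope of \autoref{thm:classification}, so the uniqueness clause applies: two continuous field morphisms inducing the same $\Cu$-morphism are approximately unitarily equivalent. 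Hence $\psi\circ\phi$ is approximately unitarily equivalent to $\id_\mathcal{A}$ and $\phi\circ\psi$ is approximately unitarily equivalent to $\id_\mathcal{B}$.

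Finally I would run Elliott's two-sided approximate intertwining argument to upgrade these approximate statements to an honest isomorphism. The data $\phi,\psi$ together with the approximate unitary equivalences $\psi\phi\approx\id_\mathcal{A}$ and $\phi\psi\approx\id_\mathcal{B}$ constitute an approximate intertwining between the identity systems on $\mathcal{A}$ and on $\mathcal{B}$; the standard argument then produces a genuine isomorphism $\mathcal{A}\simeq\mathcal{B}$ arising as a point-norm limit of (unitary conjugates of) $\phi$. I would note that this limit isomorphism is a continuous field morphism, since the continuous field morphisms form a point-norm closed subset of $\Hom_{\Ca}(\mathcal{A},\mathcal{B})$: this closure is exactly what the continuity criterion of \autoref{prop:cc2} guarantees, as the defining conditions (descending to the fibers and preserving $C(X)$-multiplication, cf. \autoref{prop:eqmorphism}) pass to point-norm limits. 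The induced isomorphism on $\Cu$ is $\alpha$ up to the equivalence at hand, and its uniqueness up to approximate unitary equivalence is inherited from the uniqueness clause of \autoref{thm:classification} applied to $\alpha$.

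The main obstacle I anticipate is the verification that $\alpha^{-1}$ descends to the fibers with $(\alpha^{-1})_x=(\alpha_x)^{-1}$, together with the confirmation that the intertwining limit stays within the category of continuous field morphisms rather than merely being an abstract $\Ca$-isomorphism. Both are conceptual rather than computational points, but they are precisely where the continuous-field structure (as opposed to the bare $\Ca$-algebra structure) must be respected, so I would state them carefully; the intertwining machinery itself is entirely standard once the uniqueness input is in place.
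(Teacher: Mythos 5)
Your proof is correct and is, in expanded form, exactly the paper's proof: the paper's entire argument for this corollary is the one-line appeal to a standard approximate intertwining applied to \autoref{thm:classification}, which is what you carry out. The only substantive point you assert without justification is the one you yourself flag: that each $\alpha_x$ is an isomorphism, equivalently that $\alpha^{-1}$ descends to the fibers, i.e.\ that $\alpha(\ker\Cu(\pi_x^{\mathcal{A}}))$ \emph{equals} $\ker\Cu(\pi_x^{\mathcal{B}})$ rather than merely being contained in it. This is true but not formal. Surjectivity of $\alpha_x$ is immediate, since $\alpha_x\circ\Cu(\pi_x^{\mathcal{A}})=\Cu(\pi_x^{\mathcal{B}})\circ\alpha$ is a composition of surjections; injectivity is where an idea is needed, because the fibers are not simple, so $\ker\pi_x^{\mathcal{A}}$ is not a maximal ideal and nothing prevents, a priori, a proper inclusion $\alpha(\ker\Cu(\pi_x^{\mathcal{A}}))\subsetneq\ker\Cu(\pi_x^{\mathcal{B}})$. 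A clean way to close it: by separability, $\alpha$ induces an isomorphism between the ideal lattices of $\mathcal{A}$ and $\mathcal{B}$, hence a homeomorphism of primitive ideal spaces; since $C(X)$ acts centrally and unitally, every primitive ideal of $\mathcal{A}$ contains $\ker\pi_x^{\mathcal{A}}$ for exactly one $x$ (kernels at distinct points are co-maximal by a partition of unity), so $\Prim(\mathcal{A})$ is partitioned by the closed sets $\Prim(A_x)$, and likewise for $\mathcal{B}$; the descending hypothesis says the induced homeomorphism $\Prim(\mathcal{B})\rightarrow\Prim(\mathcal{A})$ carries each $\Prim(B_x)$ \emph{into} $\Prim(A_x)$, and a bijection carrying the blocks of one partition into the corresponding blocks of another necessarily carries each block \emph{onto} its target; intersecting primitive ideals then yields $\alpha^{-1}(\ker\Cu(\pi_x^{\mathcal{B}}))=\ker\Cu(\pi_x^{\mathcal{A}})$, so $(\alpha^{-1})_x=(\alpha_x)^{-1}$ exists as you claim. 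With that lemma supplied, the rest of your argument (lifting $\alpha$ and $\alpha^{-1}$, applying the uniqueness clause to the composites, running the two-sided intertwining, and noting that the limit of the conjugates $w_n\phi w_n^*$ with $w_n\in U(\mathcal{B})$ is again a continuous field morphism) is exactly right, and indeed more careful than the paper itself on precisely this point.
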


\begin{proof}
In the light of the above remarks, a standard approximate intertwining argument yields the result from the above theorem.
\end{proof}

\begin{rmk}[Computational use of the classification] The Cuntz semigroup of a $\Ca$-algebra (and a fortiori of a continuous field) is often very hard to compute. Some results in this direction can be found e.g., in \cite{BPT08, R07, APS11}. In our opinion, it seems unlikely that the computation of the Cuntz semigroup of a $\mathcal{C}$-object can be recovered from the one of its fibers, since $\K_1$-obstructions will appear in general. (See \cite[Theorem 3.4]{APS11}.)

Nevertheless, we believe that the classification theorem obtained could be used to distinguish or relate explicit of continuous fields via approximate intertwining methods similarly as employed in e.g. \cite{C23,C24}.
\end{rmk}

\section{An outro on continuous fields of Cuntz semigroups}
The final theorem and corollary of the previous section classify a specific class continuous fields of $\Ca$-algebras and its morphisms. It has been common to express such results in a categorical fashion, via a well-defined functor $F\colon \mathcal{C}\rightarrow \mathcal{D}$. (See e.g. \cite[Section 2]{C25}.) This section suggests a categorical context $\mathcal{F}_{\Cu}$ of \emph{continuous fields of Cuntz semigroups}, suitable to construct a well-defined classifying functor. 

Let $X$ be a compact Hausdorff space. In the aim of defining an analogous notion of continuous fields for $\Cu$-semigroups, we ought to recall some facts about (arbitrary) products in the category $\Cu$. We refer the reader to \cite{APT20a,APT20b} for more on this matter. Let $\{S_x\}_x$ be a family of $\Cu$-semigroups indexed over $X$. We can consider the following product in the category of Set.
\[
Q:=\prod_{x\in X} S_x
\]
We can equip $Q$ with point-wise sum and order, so that $Q$ becomes a positively ordered monoid closed under suprema of increasing sequences. Unfortunately, the compact-containment relation $\ll$ obtained from the point-wise order does not turn $Q$ into a $\Cu$-semigroup, since (O2) fails in general. For that matter, the \emph{$\tau$-construction} has been introduced in \cite{APT20a}. More specifically, they consider a slightly larger category $\mathcal{Q}$ consisting of positively ordered monoid closed under suprema of increasing sequence, equipped with an \emph{auxiliary relation} $\prec$, and their morphisms. A fortiori, any $\Cu$-semigroup $S$ is a $\mathcal{Q}$-semigroup when equipped with the compact-containment relation. In other words, $(S,\ll)\in \mathcal{Q}$ for any $S\in \Cu$. 

Subsequently, they construct functor $\tau\colon \mathcal{Q}\rightarrow \Cu$ which sends a $\mathcal{Q}$-semigroup $(Q,\prec)$ to a $\Cu$-semigroup $\tau(Q,\prec)$, which roughly consists of equivalence classes of \emph{$\prec$-increasing \textquoteleft paths\textquoteright}. See \cite{APT20a, APT20b} for more details on the $\tau$-construction. The functor $\tau$ is a left-adjoint of the inclusion functor $\nu\colon \Cu\rightarrow \mathcal{Q}$. In particular, $\tau$ passes arbitrary products from $\mathcal{Q}$ to $\Cu$. 

Back to our setting, the point-wise compact-containment relation $\ll_{\rm{pw}}$ is a well-defined auxiliary relation on $Q$. As a consequence, we obtain the (arbitrary) product of the family $\{S_x\}_x $ in the category $\Cu$ by applying the functor $\tau$ to $Q$. More specifically, we have
\[
\Cu-\!\prod_{x\in X} S_x:=\tau(Q,\ll_{\rm{pw}})
\]
\begin{rmk}
In specific cases, the arbitrary product $\prod_{x\in X} S_x$ is already a $\Cu$-semigroup. In this case, $\Cu-\prod_{x\in X} S_x\simeq \prod_{x\in X} S_x$ and we do not have to consider equivalence classes of $\ll_{\rm{pw}}$-increasing paths.
\end{rmk}

Finally, based on the ideas developed in \cite{ABP13} for \emph{(pre)sheaves of $\Cu$-semigroups}, we define a notion of continuous fields of $\Cu$-semigroups as follows. Again by a \emph{section}, we mean an element of $\Cu-\prod_x S_x$. 

\begin{dfn}\label{dfn:ctsfieldsCu}
A \emph{continuous field of $\Cu$-semigroups} is a triple $(X,\{S_x\}_{x}, \mathcal{S})$ where $X$ is a compact Hausdorff space, $\{S_x\}_{x}$ is a family of $\Cu$-semigroups indexed over $X$ and $\mathcal{S}$ is a family of sections satisfying the following conditions.

(i) $\mathcal{S}$ is a $\Cu$-semigroup under point-wise sum and order.

(ii) For each $x\in X$, the map $\pi_x\colon \mathcal{S}\rightarrow S_x$ sending $s\mapsto s(x)$ is a surjective $\Cu$-morphism.

(iii) For each $s',s,t\in \mathcal{S}$ such that $s(x)\leq t(x)$ for some $x\in X$ and $s'\ll s$ in $\mathcal{S}$, there exists an open neighborhood $V$ of $x$ such that $s'(y)\leq t(y)$ for all $y\in V$.

(iv) $\mathcal{S}$ is stable by $\Lsc(X,\overline{\N})$-multiplication. 
That is, for any $\lambda\in \Lsc(X,\overline{\N})$ and any $s=\overline{p_s}\in \mathcal{S}$, where $p_s$ is any path representative of $s$, the section $\lambda.s:=\overline{\lambda.p_s}$ belongs to $\mathcal{S}$, where $\lambda.p_s$ is the path in $\prod_{x\in X} S_x$ defined by $(\lambda(x).p_s(x))_x$.
 \end{dfn}

\begin{rmk}
Even though we do not pursue the study of abstract continuous fields of $\Cu$-semigroups here, let us point out that \autoref{dfn:ctsfieldsCu} (iii) has been modeled after the ideas of \cite[Lemma 2.2]{ABP13} and thoroughly thought has an analogous of \autoref{dfn:ctsfields} (iii). 

E.g., it seems plausible that an analogous topology $\tau_{\mathcal{S}}$ on the disjoint union $\bigsqcup_x S_x$ of the fibers, constructed analogously as in \cite[Section 2]{ABP13} would yield a continuity criterion.
\end{rmk}
 
We naturally define a \emph{continuous field morphism} between continuous fields of $\Cu$-semi\-groups $(X,\{S_x\}_x,\mathcal{S})$ and $(X,\{T_x\}_x,\mathcal{T})$ to be a $\Cu$-morphism $\alpha\colon \mathcal{S}\rightarrow \mathcal{T}$ which \emph{descends to the fibers}. In other words, $\alpha$ induces a family $\{\alpha_x\colon S_x\rightarrow T_x\}$ of $\Cu$-morphisms such that $\pi_x^\mathcal{T}\circ\alpha=\alpha_x\circ\pi_x^\mathcal{S}$ for any $x\in X$. 

As a consequence, we define the category $\mathcal{F}_{\Cu}$ of continuous fields of $\Cu$-semigroups and their morphisms.

\begin{prop} The assignment 
\[
\begin{array}{ll}
	\hspace{1cm} \Cu\colon \mathcal{F}_{\Ca}\longrightarrow \mathcal{F}_{\Cu}\\
	\hspace{0,1cm} (X,\{A_x\}_{x}, \mathcal{A})\longmapsto (X,\{\Cu(A_x)\}_{x}, \Cu(\mathcal{A}))\\
	\hspace{2,2cm} \phi \longmapsto \Cu(\phi)
\end{array}
\] 
is a well-defined functor.
\end{prop}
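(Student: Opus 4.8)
The plan is to verify three things: that the assignment sends each object of $\mathcal{F}_{\Ca}$ to a genuine continuous field of $\Cu$-semigroups in the sense of \autoref{dfn:ctsfieldsCu}, that it sends continuous field morphisms to morphisms in $\mathcal{F}_{\Cu}$, and that it respects identities and composition. The latter two are quick. Functoriality of the ordinary Cuntz semigroup functor on $\Ca$-algebras gives $\Cu(\id_{\mathcal{A}})=\id_{\Cu(\mathcal{A})}$ and $\Cu(\psi\circ\phi)=\Cu(\psi)\circ\Cu(\phi)$; and for a continuous field morphism $\phi$ one simply applies $\Cu$ to the commuting square of \autoref{dfn:dsc} to obtain $\Cu(\pi_x^{\mathcal{B}})\circ\Cu(\phi)=\Cu(\phi_x)\circ\Cu(\pi_x^{\mathcal{A}})$ for all $x$. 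This exhibits $\Cu(\phi)$ as descending to the fibers, with induced maps $(\Cu(\phi))_x=\Cu(\phi_x)$, so $\Cu(\phi)$ is a morphism in $\mathcal{F}_{\Cu}$.

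The substance lies in the object part. Starting from $(X,\{A_x\}_x,\mathcal{A})$, the fibre projections $\pi_x\colon\mathcal{A}\to A_x$ induce $\Cu$-morphisms $\Cu(\pi_x)\colon\Cu(\mathcal{A})\to\Cu(A_x)$, which assemble through the universal property of the product in $\Cu$ into a single $\Cu$-morphism $\Pi\colon\Cu(\mathcal{A})\to\Cu-\prod_x\Cu(A_x)=\tau(Q,\ll_{\mathrm{pw}})$. I realize $\Cu(\mathcal{A})$ as a family of sections via $s(x):=\Cu(\pi_x)(s)$ and set $\mathcal{S}:=\Cu(\mathcal{A})$. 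Condition (i) then reduces to the observation that $\Cu(\mathcal{A})$ is a $\Cu$-semigroup (as $\mathcal{A}$ is a $\Ca$-algebra) whose addition is manifestly point-wise; the one point genuinely using the field structure is that the order is point-wise, i.e. that $\Pi$ reflects order, which is deduced from the semicontinuity of comparison (condition (iii)) together with the compactness of $X$. Condition (ii) holds because each $\pi_x$ is a surjective $*$-homomorphism and $\Cu$ preserves surjectivity (lift a positive element of $A_x$ to a positive element of $\mathcal{A}$). Condition (iii) is precisely the semicontinuity of Cuntz comparison in a $C(X)$-algebra, supplied by \cite[Lemma 2.2]{ABP13}, exactly as already invoked in \autoref{lma:approxCumorph}.

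The remaining condition (iv), stability under $\Lsc(X,\overline{\N})$-multiplication, is the main obstacle, since it requires producing the scaled section inside $\Cu(\mathcal{A})$ and matching it with the path-representative prescription of \autoref{dfn:ctsfieldsCu} (iv). The approach is to handle the two elementary kinds of multipliers and pass to suprema. Multiplication by a constant $n\in\N$ is $s\mapsto n\cdot s$, computed in $\Cu(\mathcal{A})$; multiplication by the characteristic function $\mathbf{1}_U$ of an open set $U$ is obtained by choosing $\lambda_n\in C_0(U)_+$ with $\lambda_n\nearrow\mathbf{1}_U$ point-wise, writing $s=[a]$ with $a\in(\mathcal{A}\otimes\KK)_+$, and setting $\mathbf{1}_U\cdot s:=\sup_n[\lambda_n a]$. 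Here $\lambda_n a\in\mathcal{A}\otimes\KK$ by the $C(X)$-module structure (\autoref{dfn:ctsfields} (iv)), the sequence $[\lambda_n a]$ is increasing by centrality of $\lambda_n$, its supremum lies in $\Cu(\mathcal{A})$ because $\Cu$-semigroups admit suprema of increasing sequences, and its fibre at $x$ is $s(x)$ for $x\in U$ and $0$ otherwise. A general $\lambda\in\Lsc(X,\overline{\N})$ is a supremum of an increasing sequence of finite sums $\sum_i n_i\mathbf{1}_{U_i}$, so $\lambda\cdot s$ is defined as the corresponding supremum and again lies in $\Cu(\mathcal{A})$. The delicate final step is to check that this element coincides with the class $\overline{\lambda.p_s}$ of the point-wise scaled path representative; I expect this to follow from the compatibility of the $\tau$-construction with suprema together with the explicit fibre computation above.
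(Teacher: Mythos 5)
Your overall architecture matches the paper's: form the canonical map from $\Cu(\mathcal{A})$ into the $\Cu$-product $\Cu\text{-}\prod_x\Cu(A_x)$, verify conditions (i)--(iv) of \autoref{dfn:ctsfieldsCu} (with (iii) supplied by \cite[Lemma 2.2 (i)]{ABP13}), and obtain the morphism part by applying $\Cu$ to the commuting squares of \autoref{dfn:dsc}. The paper's own proof is terse: it writes the map explicitly as $[a]\mapsto\overline{p_a}$ with $p_a(\epsilon)=(\Cu(\pi_x)([(a-\epsilon)_+]))_x$, \emph{asserts} that this is an order-embedding, deduces (iii) from \cite{ABP13}, and leaves (iv) to the reader. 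You go further than the paper in attempting to prove the order-embedding, and that is precisely where your argument has a genuine gap.

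Order-reflection of $\Pi$ cannot be deduced from semicontinuity of comparison plus compactness of $X$. Those ingredients only produce, for each $\epsilon>0$, a finite cover $\{V_i\}$ of $X$ with local comparisons $(a-\epsilon)_+\precsim b$ over each $V_i$; gluing the local Cuntz comparison witnesses $d_i$ (with $d_i^*bd_i\approx(a-\epsilon)_+$ over $V_i$) into a single global witness is the hard part, and it is obstructed by the dimension of $X$ and by $\K_1$-data of the fibers. Concretely, your argument uses neither hypothesis and would apply verbatim to $X=S^2$ with the trivial field $\mathcal{A}=C(S^2)$: if $p$ is the Bott projection and $q$ a trivial rank-one projection, then $[(p-\epsilon)_+(x)]=[(q-\delta)_+(x)]=1$ in $\Cu(\C)\simeq\overline{\N}$ for every $x$ and all small $\epsilon,\delta$, so $\Pi([p])=\Pi([q])$, while $[p]\neq[q]$ in $\Cu(C(S^2))$; thus $\Pi$ is not an order-embedding there, even though semicontinuity and compactness hold. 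Even over $X=[0,1]$, injectivity requires conditions on the fibers (stable rank one, vanishing $\K_1$ of ideals); this is the content of the main results of \cite{ABP13} and is exactly the $\K_1$-obstruction the paper itself flags by citing \cite[Theorem 3.4]{APS11}. So the correct justification is not a compactness argument but an appeal to \cite{ABP13} under the paper's standing assumptions (base space of dimension at most one and, in the applications, $\AI$ fibers), which is presumably what the paper's unproved assertion of a \textquoteleft natural order-embedding\textquoteright\ implicitly relies on. The rest of your proposal --- functoriality, surjectivity of $\Cu(\pi_x)$, condition (iii), and your sketch of (iv), which the paper omits entirely --- is sound, modulo the acknowledged loose end in matching your construction for (iv) with the path-representative formulation.
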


\begin{proof} Let $(X,\{A_x\}_{x}, \mathcal{A})$ be a continuous field of $\Ca$-algebras. Observe that $\Cu(A_x)$ and $\Cu(\mathcal{A})$ are $\Cu$-semigroups and that for any $x\in X$, we have a surjective $\Cu$-morphism $\Cu(\pi_x)\colon \Cu(\mathcal{A})\rightarrow \Cu(A_x)$. Also, There is a natural order-embedding $\nu\colon \Cu(\mathcal{A})\lhook\joinrel\rightarrow \Cu-\prod_x \Cu(A_x)$ given by $[a]\mapsto \overline{p_a}$ where $p_a$ is the $\ll_{\rm{pw}}$-increasing path of $\prod_x \Cu(A_x)$ given by $\epsilon\mapsto (\Cu(\pi_x)([(a-\epsilon)_+]))_x$. 
Next, we deduce (iii) of \autoref{dfn:ctsfieldsCu} from \cite[Lemma 2.2 (i)]{ABP13}, while we left (iv) for the reader to check. 

The fact that a $\mathcal{F}_{\Ca}$-morphism $\phi$ induces a $\mathcal{F}_{\Cu}$-morphism $\Cu(\phi)$ falls from construction.
\end{proof}

We shall leave a more in-depth study of the category $\mathcal{F}_{\Cu}$ and its associated functor $\Cu$ (e.g. its continuity) for future works.
We end the manuscript by rewriting the classification results obtained in the previous section in categorical terms.

\begin{thm} The functor $\Cu\colon \mathcal{F}_{\Ca}\longrightarrow \mathcal{F}_{\Cu}$ classifies continuous fields morphisms of $\mathcal{C}$. 

A fortiori, any $\mathcal{F}_{\Cu}$-isomorphism $\alpha\colon \Cu((X,\{A_x\}_{x}, \mathcal{A}))\rightarrow \Cu((X,\{B_x\}_{x}, \mathcal{B}))$ lifts to a unique (up to approximate unitary equivalence) $\mathcal{F}_{\Ca}$-isomorphism $\phi\colon (X,\{A_x\}_{x}, \mathcal{A})\rightarrow (X,\{B_x\}_{x}, \mathcal{B})$. 
\end{thm}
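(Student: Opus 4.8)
The plan is to read the statement as a purely functorial reformulation of \autoref{thm:classification} together with its corollary, so that the only genuinely new content is the bookkeeping that identifies the abstract categorical data in $\mathcal{F}_{\Cu}$ with the concrete invariants already handled. First I would unwind what it means for $\Cu$ to \emph{classify} the morphisms of $\mathcal{C}$: for $\mathcal{A},\mathcal{B}\in\mathcal{C}$ it asserts that the induced map
\[
\Cu\colon \Hom_{\mathcal{F}_{\Ca}}(\mathcal{A},\mathcal{B})/\!\!\sim\;\longrightarrow\;\Hom_{\mathcal{F}_{\Cu}}(\Cu(\mathcal{A}),\Cu(\mathcal{B}))
\]
(where $\sim$ denotes approximate unitary equivalence) is a bijection onto the scaled morphisms. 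The key observation, which I would record first, is that by the very construction of the functor a $\mathcal{F}_{\Cu}$-morphism $\alpha\colon\Cu(\mathcal{A})\to\Cu(\mathcal{B})$ is precisely a $\Cu$-morphism $\Cu(\mathcal{A})\to\Cu(\mathcal{B})$ descending to the fibers in the sense of \autoref{dfn:dscCu}; this translation is immediate once one checks that the fibers $\{\Cu(A_x)\}_x$ and the sections $\Cu(\mathcal{A})$ of the continuous field of $\Cu$-semigroups $\Cu((X,\{A_x\}_x,\mathcal{A}))$ coincide with the data entering \autoref{dfn:dscCu}.

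With this identification in place, the classification of morphisms would be a direct invocation of \autoref{thm:classification}. Given a scaled $\mathcal{F}_{\Cu}$-morphism $\alpha$, the theorem furnishes a continuous field morphism $\phi$ with $\Cu(\phi)=\alpha$, which gives surjectivity of the displayed map; its injectivity up to approximate unitary equivalence is exactly the uniqueness clause of the same theorem (equivalently, \autoref{cor:existence} for existence and \autoref{cor:uniqueness} for uniqueness). No fresh analytic input is needed here, since the selection-theoretic existence and the gluing-based uniqueness have already been established.

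For the \textquoteleft a fortiori\textquoteright\ part, let $\alpha$ be a $\mathcal{F}_{\Cu}$-isomorphism. I would apply the existence result to $\alpha$ and to $\alpha^{-1}$ to obtain continuous field morphisms $\phi\colon\mathcal{A}\to\mathcal{B}$ and $\psi\colon\mathcal{B}\to\mathcal{A}$ with $\Cu(\phi)=\alpha$ and $\Cu(\psi)=\alpha^{-1}$. Then $\Cu(\psi\circ\phi)=\alpha^{-1}\circ\alpha=\id_{\Cu(\mathcal{A})}=\Cu(\id_{\mathcal{A}})$, so the uniqueness clause forces $\psi\circ\phi\sim\id_{\mathcal{A}}$ and, symmetrically, $\phi\circ\psi\sim\id_{\mathcal{B}}$. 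A standard approximate intertwining argument then upgrades this pair of one-sided approximate inverses into an honest $\mathcal{F}_{\Ca}$-isomorphism, unique up to approximate unitary equivalence; this is exactly the content of the corollary to \autoref{thm:classification}, which I would cite rather than reprove.

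I expect the main obstacle to be purely the bookkeeping of the scale. Both \autoref{thm:classification} and its corollary require the $\Cu$-morphism to be \emph{scaled}, i.e.\ to respect the distinguished class $[1_{\mathcal{A}}]$, whereas an abstract $\mathcal{F}_{\Cu}$-(iso)morphism carries no such datum a priori. The care needed is therefore to fix a convention: either read $\mathcal{F}_{\Cu}$ as a category of \emph{pointed} (scaled) continuous fields of $\Cu$-semigroups, so that its morphisms automatically satisfy $\alpha([1_{\mathcal{A}}])=[1_{\mathcal{B}}]$, or to carry the scaling hypothesis explicitly through the statement. Once this convention is pinned down, every remaining step is formal and reduces to the results of the previous section.
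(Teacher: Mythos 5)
Your proposal is correct and takes essentially the same route as the paper, which in fact offers no separate proof at all: the theorem is presented explicitly as a categorical rewriting of \autoref{thm:classification} and its corollary, which is exactly the unwinding-plus-citation you carry out, including the observation that an $\mathcal{F}_{\Cu}$-morphism between $\Cu(\mathcal{A})$ and $\Cu(\mathcal{B})$ is by construction a $\Cu$-morphism descending to the fibers in the sense of \autoref{dfn:dscCu}. Your closing remark about the scale is a fair flag of a detail the paper leaves implicit (its final statement likewise suppresses the scaled hypothesis that \autoref{thm:classification} and its corollary require), but it does not alter the substance of the argument.
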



\begin{thebibliography}{99}
\bibitem{ABP13} R. Antoine, J. Bosa and F. Perera, \emph{The Cuntz semigroup of continuous fields}, Indiana Univ. Math. J., 62(4), (2013), 1105-1131.
\bibitem{APS11} R. Antoine, F. Perera, and L. Santiago, \emph{Pullbacks, $\mathcal{C}(X)$-algebras, and their Cuntz semigroups}, J. Funct. Anal. 260, (2011), no. 10, pp. 2844-2880. 
\bibitem{APT20a} R. Antoine, F. Perera, and H. Thiel, \emph{Cuntz semigroups of ultraproduct $\Ca$-algebras}, J. Lond. Math. Soc., (2020), 994-1029.
\bibitem{APT20b}  R. Antoine, F. Perera, and H. Thiel, \emph{Abstract bivariant Cuntz semigroups}, Int. Math. Res. Not. IMRN 17, (2020), 5342-5386.
\bibitem{AM10} P. Ara and M. Mathieu, \emph{Sheaves of $\Ca$-algebras}, Mathematische Nachrichten, 283,  (2010), 21-39.
\bibitem{A99} O. Yu. Aristov, \emph{On the homotopy equivalence of simple AI-algebras}, Sb. Math., 190 2 (1999), 165-191.
\bibitem{BPT08} N. P. Brown, F. Perera, A. S. Toms, \emph{The Cuntz semigroup, the Elliott conjecture, and dimension functions on C*-algebras}., J. Rein Angew. Math. 621, (2008), pp.191-211. 
\bibitem{C22} L. Cantier, \emph{Uniformly based Cuntz semigroups and approximate intertwinings}, Int. J. Math. 33, (2022), no. 09, 2250062.
\bibitem{C23} L. Cantier, \emph{The unitary Cuntz semigroup on the classification of non simple $\Ca$-algebras}. J. Math. Anal. Appl. 522, (2023), no. 2, 127003.
\bibitem{C24} L. Cantier, \emph{On the Nielsen-Thomsen sequence}, (2024). Preprint available at \url{https://arxiv.org/pdf/2412.11975.pdf}.
\bibitem{C25} L. Cantier, \emph{Towards a classification of unitary elements of $\Ca$-algebras}, Int. Math. Res. Not. IMRN 7, (2025), 1-19.
\bibitem{CE08} A. Ciuperca and  G.A. Elliott, \emph{A remark on invariants for $\Ca$-algebras of stable rank one}, Int. Math. Res. Not. IMRN 5, (2008), 33.
\bibitem{CES11} A. Ciuperca, G. A. Elliott and L. Santiago, \emph{On inductive limits of type-I $\Ca$-algebras with one-dimensional spectrum}, Int. Math. Res. Not. IMRN, 11, (2011), 2577-2615.
\bibitem{DEN11} M. D\u{a}d\u{a}rlat, G.A. Elliott and Z. Niu, \emph{One-Parameter Continuous Fields of Kirchberg Algebras II}, Canad. J. Math. 63 (3), (2011), 500-532.
\bibitem{DP89} M. D\u{a}d\u{a}rlat and C. Pasnicu, \emph{Inductive limits of C(X)-modules and continuous fields of AF-algebras}, J. Funct. Anal. 85(1), (1989), 103-116.
\bibitem{FV25} I. Farah and A. Vaccaro, \emph{Continuous selection of unitaries in $II_1$ factors}, (2025). Preprint available at \url{https://arxiv.org/pdf/2501.01272}.
\bibitem{M56a} E. Michael, \emph{Continuous selections. I}, Ann. of Math. (2) 63, (1956) 361-382.
\bibitem{M56} E. Michael, \emph{Continuous selections. II}, Ann. of Math. (2) 64, (1956), 562-580.
\bibitem{N96} M. Nilsen, \emph{$\Ca$-bundles and $C_0(X)$-algebras}, Indiana Univ. Math. J. 45, (1996), 463-477.
\bibitem{R07} L. Robert, \emph{The Cuntz semigroup of some spaces of dimension at most 2}, C. R. Math. Acad. Sci. Soc. R. Can., (2007), pp.22-32.
\bibitem{RS10} L. Robert and L. Santiago, \emph{Classification of $^*$-homomorphisms from $\mathcal{C}_0(0,1]$ to a C*-algebra}, J. Funct. Anal. 258, (2010), no. 3., pp. 869-892.
\bibitem{R12} L. Robert, \emph{Classification of inductive limits of one-dimensional NCCW complexes}, Adv. Math. 231, (2012), no. 5, 2802-2836.
\bibitem{WO93} N.E. Wegge-Olsen, \emph{K-Theory and $\Ca$-Algebras}, Oxford Sci. Publ., The Clarendon Press, Oxford University Press, New York, 1993.
\end{thebibliography}
\end{document}